\documentclass[11pt]{article}
\usepackage{amssymb}

\usepackage{latexsym}
\usepackage{amsmath}
\usepackage{amsthm}
\usepackage{mathtools}
\usepackage{epsfig}
\usepackage{amscd}
\usepackage{graphicx}
\usepackage{tikz-cd}
\usepackage{appendix}
\usepackage{enumerate}
\usepackage{color}
\usepackage{todonotes}

\usepackage{tikz}
\usetikzlibrary{decorations.markings}
\usetikzlibrary{arrows.meta}

\usepackage{extarrows}

\usepackage[colorlinks=true,linkcolor=blue,citecolor=blue]{hyperref}

\usepackage[all,cmtip]{xy}

\theoremstyle{plain}
\newtheorem{theorem}{Theorem}[section]
\newtheorem{proposition}[theorem]{Proposition}
\newtheorem{lemma}[theorem]{Lemma}

\newtheorem{conjecture}[theorem]{Conjecture}

\theoremstyle{definition} 
\newtheorem{definition}[theorem]{Definition}
\newtheorem{example}[theorem]{Example}

\theoremstyle{remark} 
\newtheorem{remark}[theorem]{Remark}

\numberwithin{equation}{section}

\newcommand{\cB}{\mathcal{B}}

\newcommand{\sgn}{\mathrm{sgn}}
\newcommand{\R}{\mathbb{R}}

\newcommand{\Sc}{\mathrm{Sc}}
\newcommand{\C}{\mathbb{C}}
\newcommand{\supp}{\mathrm{supp}}

\newcommand{\Z}{\mathbb{Z}}
\newcommand{\N}{\mathbb{N}}

\newcommand{\beq}[1]{\begin{equation} \label{#1}}
\newcommand{\eeq}{\end{equation}}

\DeclareMathOperator{\Spec}{spec}
\DeclareMathOperator{\Spin}{Spin}
\DeclareMathOperator{\ind}{Ind}
\DeclareMathOperator{\id}{id}

\DeclareMathOperator{\Real}{Re}
\DeclareMathOperator{\vol}{vol}

\begin{document}
	\title{Quantitative partitioned index theorem and noncompact band-width}

	\author{Peter Hochs\footnote{Institute for Mathematics, Astrophysics and Particle Physics, Radboud University, Nijmegen, the Netherlands, \texttt{{p.hochs@math.ru.nl}}} {} and Jinmin Wang\footnote{Institute of Mathematics, Chinese Academy of Sciences, Beijing, China,  \texttt{jinmin@amss.ac.cn}}}
%

		\maketitle
	
	\begin{abstract}
 Gromov's band-width conjecture gives a precise upper bound for the width of a compact Riemannian band with positive scalar curvature lower bound, assuming that 
  the cross-section of the band admits no positive scalar curvature metrics. Versions of this were proved by Cecchini and by Zeidler.  In this paper, we develop a  quantitative version of partitioned manifold index theory, which applies to noncompact hypersurfaces. Using this, we prove a version of Gromov's band-width estimate for possibly noncompact Riemannian bands.
	\end{abstract}

\tableofcontents

\section{Introduction}

In 2018, Gromov made the following conjecture, see Conjecture C on page 722 of \cite{Gromov18}.
\begin{conjecture}[Band-width conjecture; Gromov, 2018]\label{con band width cpt}
Let $N$ be a compact manifold without boundary, of dimension $n-1 \geq 5$. Suppose that $N$ does not admit Riemannian metrics of positive scalar curvature. Consider a Riemannian metric $g$ on $N \times [-1,1]$. Let $\ell$ be the distance between $N \times \{-1\}$ and $N \times \{1\}$ with respect to this metric. Then
\[
	\inf_{x\in N \times [-1,1]}\Sc_g(x)\le\frac{4\pi^2(n-1)}{n\ell^2}.
\]
\end{conjecture}
Here we write $\Sc_g$ for the scalar curvature associated to a Riemannian metric $g$. 
Gromov noted that the bound $\frac{4\pi^2(n-1)}{n\ell^2}$ is optimal \cite[pages 653--654]{Gromov18}. 

Versions of Gromov's conjecture were proved by Zeidler \cite[Theorem 1.4]{Zeidler22}  and Cecchini  \cite[Theorem D]{Cecchini20} (who achieved the optimal bound), under the condition that  Rosenberg index of $N$ is nonzero. The Rosenberg index is a refined obstruction to Riemannian metrics of positive scalar curvature on compact $\Spin$ manifolds \cite{Rosenberg83, Rosenberg86a, Rosenberg86b}. Zeidler extended these results to more general forms of Riemannian bands.  A consequence of \cite[Theorem 1.4]{Zeidler20} is the following.
\begin{theorem}[Zeidler, 2020]\label{thm band width cpt}
Let $X$ be a complete Riemannian $\Spin$-manifold of dimension $n$, and let $N \subset X$ be a compact hypersurface. Suppose that 
\begin{itemize}
\item $N$ has trivial normal bundle;
\item the map $\pi_1(N) \to \pi_1(X)$ induced by the inclusion is injective;
\item the Rosenberg index of $N$ in $KO_{n-1}(C^*(\pi_1(N)))$ is nonzero. 
\end{itemize}
Suppose  that $M$ is a neighbourhood of $N$ of width $\ell$. Then
\[
	\inf_{x\in M}\Sc_g(x)\le\frac{4\pi^2(n-1)}{n\ell^2}.
\]
\end{theorem}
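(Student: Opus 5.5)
We argue by contradiction: assume $\Sc_g \geq \sigma > \frac{4\pi^2(n-1)}{n\ell^2}$ on $M$, and derive that the Rosenberg index of $N$ vanishes.

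\textbf{Reduction to a band.} Since $N$ has trivial normal bundle and $M$ is a neighbourhood of width $\ell$, the signed distance to $N$ gives a Lipschitz function on $M$. For any $\ell' < \ell$ close to $\ell$, we extract a compact band $V \subset M$ containing $N$ with $\partial V = \partial_- V \sqcup \partial_+ V$. We arrange that $\operatorname{dist}(\partial_- V, \partial_+ V) \geq \ell'$ and that $N$ separates the two boundary pieces. Choosing $\ell'$ so that still $\sigma > \frac{4\pi^2(n-1)}{n\ell'^2}$, it suffices to treat $V$.

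\textbf{Coefficients and the partitioned index.} Let $\Gamma = \pi_1(N)$. Because $\pi_1(N) \to \pi_1(X)$ is injective, we pass to the covering $\widetilde X \to X$ associated to the subgroup $\Gamma$. Then $N$ lifts to a copy $\widetilde N \cong N$ with fundamental group mapping isomorphically. On the lift $\widetilde V$ of $V$, we twist the spinor Dirac operator by the Mishchenko bundle $\mathcal{E} = \widetilde N \times_\Gamma C^*_{\max}(\Gamma)$, extended over the band. The partitioned index theorem (Roe, and Zeidler's band version) identifies a relative/partitioned index of the Dirac operator on the band with boundary conditions with the Rosenberg index $\alpha(N) \in KO_{n-1}(C^*\Gamma)$, up to the suspension isomorphism. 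This is the topological input, and we apply it via the real Callias-type framework.

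\textbf{Quantitative vanishing.} Next we consider the Callias-type operator $B_f = D \otimes 1 + f\,\epsilon$ on $V$, with $f$ a function of the signed distance. We take $f$ to solve the Riccati-type ODE $f' + \frac{n}{n-1}\,\frac{f^2}{?}$-type equation; in Cecchini's and Zeidler's normalisation this is $f = \frac{2\pi}{\ell'}\tan(\ldots)$ blowing up at $\partial_\pm V$. With this choice,
\[
\frac{n}{4(n-1)}\sigma + f^2 - |\nabla f| > 0 .
\]
The estimate comes from the spectral Lichnerowicz inequality $D^2 \geq \frac{n}{n-1}\cdot\frac{\Sc}{4}$ on the relevant components, which is obtained from the Penrose (twistor) operator. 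The boundary blow-up, or alternatively suitable local boundary conditions, makes $B_f$ Fredholm, and the estimate makes it invertible. Its index equals $\alpha(N)$ by the previous step, hence $\alpha(N) = 0$, a contradiction. Finally we let $\ell' \to \ell$.

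\textbf{Main obstacle.} The delicate step is the sharp constant $\frac{4\pi^2(n-1)}{n\ell^2}$. Only the ordinary Lichnerowicz formula gives $\frac{\Sc}{4}$ and hence the weaker constant $\frac{4\pi^2}{\ell^2}\cdot\frac{n-1}{n}$-free bound. To get the sharp constant we must:
\begin{enumerate}[(i)]
\item combine the twistor improvement with the potential term $f\epsilon$, which requires controlling the cross term $c(\nabla f)$ by Cauchy--Schwarz with the weight $\frac{n-1}{n}$;
\item solve the resulting ODE, which yields the $\tan$ profile with period matching $\ell'$.
\end{enumerate}
Since this is Zeidler's theorem, in practice we would cite \cite[Theorem 1.4]{Zeidler20} for this analytic estimate and only verify that the hypotheses listed above reduce to his band setting.
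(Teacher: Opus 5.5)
The paper does not prove this statement. It is quoted from \cite[Theorem 1.4]{Zeidler20}. The only related argument in the paper is the example after Theorem \ref{thm:bandwidthForBand}, which recovers a complex variant (equivariant coarse index in $K_{n-1}(C^*(\pi_1(N)))$ in place of the Rosenberg index) from the noncompact band-width theorem. Your outline follows the Cecchini--Zeidler route instead: a Callias-type operator with a $\tan$-profile potential, the twistor-improved Lichnerowicz estimate, and identification of the index with $\alpha(N)$. If you simply cite \cite[Theorem 1.4]{Zeidler20}, you are doing what the paper does. As a proof sketch, however, it has a genuine gap in the order of the first two steps.

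\textbf{The gap.} You build the band $V\subset M$ from ``the signed distance to $N$'' in $X$, and only afterwards pass to the cover. A trivial normal bundle gives a sign only near $N$; nothing makes $N$ separate $X$ or its $\ell/2$-neighbourhood.

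\emph{The band need not exist.} Read ``width $\ell$'' as ``$M$ contains the $\ell/2$-neighbourhood of $N$''. Take $X=S^1_L\times T^{n-1}$ with a product metric, $N=\{pt\}\times T^{n-1}$ and $M=X$. Then $M$ is a neighbourhood of every width, so the theorem asserts $\inf\Sc\le 0$. Yet every compact band in $X$ in which $N$ separates $\partial_-V$ from $\partial_+V$ has width at most $L$. To see this, follow the $S^1$-fibre through a point of $N$: it leaves $V_+$ through $\partial_+V$ and $V_-$ through $\partial_-V$ within total length $L$.

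\emph{The lift need not be compact.} Even when such a $V$ exists (e.g.\ if $M$ is assumed to be a band separated by $N$), consider the component of its preimage in the $\Gamma$-cover that contains the lift of $N$. It is a covering of $V$ of degree $[i_*\pi_1(V):\Gamma]$, with $i\colon V\hookrightarrow X$, and this degree can be infinite. In that case the lifted band is noncompact and the preimage of $N$ in it has further components. The index of the twisted Callias operator then no longer lives in, or is identified with $\alpha(N)$ in, $KO_{n-1}(C^*\Gamma)$.

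\textbf{The fix is to reverse the order.}
\begin{itemize}
\item Pass first to the cover $\bar X$ with $\pi_1(\bar X)=\Gamma$, in which $N$ lifts to $N'$ with $\pi_1(N')\cong\pi_1(\bar X)$.
\item Then $N'$ separates $\bar X$. A loop crossing $N'$ once would have intersection number $\pm1$ with it, but it is homotopic to a loop in $N'$, which can be pushed off $N'$ along the normal field. So the signed distance $t$ to $N'$ is a global $1$-Lipschitz function.
\item The set $\{|t|\le\ell'/2\}$ is compact by completeness of $\bar X$. Since the covering map does not increase distances, this set maps into $M$ and inherits $\Sc\ge\sigma$.
\item Run the Callias argument there, twisted by the Mishchenko bundle of $\bar X$, which restricts to that of $N'$. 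Your $\widetilde N\times_\Gamma C^*_{\max}\Gamma$ should use the universal cover, not the lift $\widetilde N\cong N$.
\end{itemize}

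\textbf{The potential.} It should be $f(t)=\frac{\pi}{\ell'}\tan\bigl(\frac{\pi t}{\ell'}\bigr)$, which solves $f'=f^2+\pi^2/\ell'^2$. Then $\frac{n\sigma}{4(n-1)}+f^2-|f'|=\frac{n\sigma}{4(n-1)}-\frac{\pi^2}{\ell'^2}>0$, exactly under your hypothesis. With your prefactor $\frac{2\pi}{\ell'}$ the constant degrades by a factor of $4$, and the Riccati equation in your sketch is left unfinished.
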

In the special case where $X = N \times \R$, Theorem \ref{thm band width cpt} becomes \cite[Theorem D]{Cecchini20} which states that Conjecture  \ref{con band width cpt} is true if the Rosenberg index of $N$ is nonzero. (See \cite[Corollary 1.5]{Zeidler20}.)

The proofs of the results in \cite{Cecchini20, Zeidler20, Zeidler22}  involve the \emph{partitioned manifold index} of Dirac operators, and the closely related index of \emph{Callias-type Dirac operators} on noncompact manifolds. In this paper, we develop a  \emph{quantitative partitioned manifold index}, and use it to extend Theorem \ref{thm band width cpt} to noncompact $N$.

If $X$ is an odd-dimensional, complete Riemannian manifold, partitioned into two parts by a \emph{compact} hypersurface $N \subset X$, then Roe's partitioned manifold index is an integer $\ind(D, N)$ associated to a Dirac operator $D$ on $X$. Roe proved a \emph{partitioned manifold index theorem}, stating that
\beq{eq PMIT intro}
\ind(D, N) = \ind(D_N),
\eeq
for a Dirac operator $D_N$ on $N$ naturally associated to $D$. See \cite{Roe1988dualtoeplitz} for the original result, or  \cite{Higson91} and \cite[Thm. 4.4]{Roe1996indextheory} for later, more streamlined accounts. As immediate consequences, one obtains cobordism invariance of the index of Dirac operators on compact manifolds, obstructions to positive scalar curvature, and homotopy invariants of certain higher signatures. (See \cite[pp.\ 31--32]{Roe1996indextheory}.)

Several variations on and generalizations of Roe's partitioned manifold index theorem have been obtained over the years, see for example \cite{Karami2019relative-partitioned, Ludewig2025large, Schick2018largescale, Seto2018toeplitz, Zadeh2010indextheory}. For a version of Theorem \ref{thm band width cpt} where $N$ may be noncompact, we develop and apply a partitioned manifold index theorem for noncompact hypersurfaces. In that setting, several results were obtained recently: 
\cite[Theorem 4.44]{bunke2024coronas}, \cite[Corollary 7.6]{Engel2025relative}, \cite[Theorem 2.23]{HochsdeKok25}. 

For our purposes, the construction of the partitioned manifold index in \cite{HochsdeKok25} is most useful, as this construction involves a Callias-type operator, which is suitable for the quantitative estimates in terms of positive scalar curvature that we will use. A disadvantage of the result in \cite{HochsdeKok25} is that  the Riemannian distance on $N$ is assumed to be equivalent, in the sense of coarse geometry, to the restriction of the Riemannian distance on $X$. 

To overcome this, we refine the result in \cite{HochsdeKok25} to a \emph{quantitative partitioned manifold index theorem}. In the results in \cite{bunke2024coronas, Engel2025relative, HochsdeKok25}, the equality \eqref{eq PMIT intro} holds in the $K$-theory of a localized Roe algebra. Roughly speaking, this is the inductive limit of the Roe algebras of $r$-neighborhoods of $N$, as $r \to \infty$. (See Definition \ref{def:propagation}.) In our quantitative version, we obtain a version of \eqref{eq PMIT intro} in a specific neighborhood of $N$, which we can control. This is Theorem \ref{thm:quantitativeIndex}, which is an equality  of the form
\beq{eq quant ind thm intro}
\ind_q(D, N) = (i_N)_*(\ind(D_N) ) \in K_*(C^*(M)),
\eeq
for a specific type of closed set $M\subset X$ containing $N$ in its interior, where $i_N \colon N \to M$ is the inclusion map and $\ind_q$ is a quantitative version of the partitioned manifold index (see Definitions \ref{def quant index odd} and \ref{def quant index even}). We illustrate the difference between partitioned manifold index theory for noncompact hypersurfaces as in \cite{HochsdeKok25}  and our quantitative version
 in Example \ref{ex quant index thm}. 



In Theorem \ref{thm band width cpt}, the index-theoretic assumptions that the induced map on fundamental groups is injective and that the Rosenberg index is nonzero guarantee that the cross-section $N$ does not admit a positive scalar curvature metric. In the setting of a complete, possibly noncompact manifold, a natural analogue is the non-vanishing of the coarse index of the Dirac operator on $N$ in $K_*(C^*(N))$. However, as we show in Subsection \ref{sec cond 1 necessary},  this condition alone is not sufficient to deduce the band-width estimate, and an additional assumption describing how $N$ lies inside the band $M$ is required. More precisely, we prove the following noncompact version of Theorem \ref{thm band width cpt}, which is the main result in this paper. (This is Theorem \ref{thm:bandwidthForBand} in the main text.)
\begin{theorem}[Band-width inequality for possibly noncompact bands]\label{thm band width intro}
	Let $(M^n,g)$ be a regular $\Spin$ Riemannian band with boundary $\partial_- M \sqcup \partial_+ M$. 
	Let $\ell$ denote the distance between $\partial_-M$ and $\partial_+M$ in $M$.
	Suppose that
	\begin{enumerate}
		\item the identity map
		\[
		\id\colon (\partial_-M,d_{g_{\partial_-M}})
		\to (\partial_-M,d_g|_{\partial_- M})
		\]
		is a coarse equivalence, and
		\item the Dirac operator $D_{\partial_-M}$ has non-zero index in $K_{n-1}(C^*(\partial_-M))$.
	\end{enumerate}
	Then
\[
	\inf_{x\in M}\Sc_g(x)\le\frac{4\pi^2(n-1)}{n\ell^2}.
\]
\end{theorem}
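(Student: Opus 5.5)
We argue by contradiction, following the Callias-type strategy of Cecchini and Zeidler and replacing the compact partitioned index theorem by the quantitative partitioned manifold index theorem, Theorem \ref{thm:quantitativeIndex}. Suppose $\inf_M \Sc_g > \frac{4\pi^2(n-1)}{n\ell^2}$. Choose $\ell' < \ell$ slightly smaller so that we still have $\inf_M \Sc_g > \frac{4\pi^2(n-1)}{n\ell'^2}$. Next take a $1$-Lipschitz function $\phi\colon M\to[-\ell'/2,\ell'/2]$ (after smoothing) with $\phi \equiv -\ell'/2$ near $\partial_-M$ and $\phi\equiv \ell'/2$ near $\partial_+M$. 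Compose it with Cecchini's optimal potential: set
\[
\psi = \tfrac{n}{n-1}\cdot \tfrac{\pi}{\ell'}\tan\bigl(\tfrac{\pi}{\ell'}\phi\bigr)\cdot c,
\]
suitably rescaled so that it satisfies the Riccati-type inequality
\[
\tfrac{n}{4(n-1)}\Sc_g + \tfrac{n}{n-1}\psi^2 - |d\psi| \ge \epsilon>0 .
\]
Since $\psi$ blows up at $\partial_\pm M$, it is more convenient to glue on infinite cylinders $\partial_\pm M\times[0,\infty)$, as in Zeidler's regular-band setup. This produces a complete manifold $X$ containing $M$, and we extend $\psi$ to be constant with large absolute value and of opposite signs on the two ends, giving a Callias-type operator $B = D + \psi\,\sigma$ on $X$.

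The key steps are as follows. (1) Apply the spectral estimate for Callias operators (via the sharpened Lichnerowicz formula with the Penrose/twistor correction and Cecchini's boundary-free argument on $X$) to obtain $B^2 \ge \epsilon' > 0$. As a result, $B$ is invertible, and the quantitative partitioned index $\ind_q(D,N)$ of Definitions \ref{def quant index odd}/\ref{def quant index even}, which is built from this Callias operator, vanishes in $K_*(C^*(M'))$ for a controlled neighbourhood $M'$. (2) Take $N = \partial_-M$, or rather a parallel copy of it inside the collar, as the partitioning hypersurface. Theorem \ref{thm:quantitativeIndex} then gives $\ind_q(D,N) = (i_N)_*\ind(D_{N})$ in $K_{n-1}(C^*(M'))$. (3) Show that $(i_N)_*$ is injective, so that hypothesis 2 yields a contradiction. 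Here hypothesis 1 enters: if the intrinsic metric of $\partial_-M$ is coarsely equivalent to the restricted metric, then, because the collar retracts $M'$ onto $N$ via a coarse map (projection along the product collar and the band direction), $i_N$ is a coarse equivalence onto its image with a coarse left inverse. Hence $(i_N)_*$ is injective on $K_*(C^*(\cdot))$, and the index from (2) is nonzero, contradicting (1).

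The main obstacle is step (3), together with making step (1) compatible with the specific neighbourhood $M$ appearing in Theorem \ref{thm:quantitativeIndex}. We need the closed set $M'$ on which the quantitative index lives to be coarsely retractable onto $N$; in general $M$ need not retract. To handle this, we first try to take $M'$ to be a thin collar neighbourhood $\partial_-M\times[0,R]$ in the glued manifold, on which the product structure gives an explicit retraction. We then check that the quantitative index theorem allows supports localized there, since the potential $\psi$ is constant outside $M$ and the relevant cutoff is supported near $N$. If this fails, the fallback is to use the controlled propagation of the quantitative index together with hypothesis 1 to push the class into $C^*(N)$ directly.
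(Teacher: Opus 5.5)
\textbf{Gap in step (3).} Your outline matches the paper's: a Callias potential with a $\tan$ profile, the $\frac{n}{n-1}$-improved Lichnerowicz estimate giving $(D\pm i\psi)^*(D\pm i\psi)\ge L^2$, vanishing of the quantitative index by Lemma \ref{lem part 2}, and nonvanishing via Theorem \ref{thm:quantitativeIndex}. However, step (3), which you yourself flag as the main obstacle, is left open, and the fix you propose cannot work.

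Both Theorem \ref{thm:quantitativeIndex} and Lemma \ref{lem part 2} produce classes on $\bar B_r(\mathcal N_\psi)$ with $\mathcal N_\psi\supseteq\psi^{-1}((-A,A))$, and the cutoff must satisfy $\chi\equiv 1$ wherever $|\psi|<A$. The $\tan$ profile has to vary across the whole width of the band, since that is where the sharp constant comes from. So neither the classes nor $\chi$ can be localized to a thin collar $\partial_-M\times[0,R]$ of $N$. Also, a general band has no ``band direction'' along which to project. Moreover, with your $\phi$ (any $1$-Lipschitz function equal to $\mp\ell'/2$ near $\partial_\mp M$), nothing guarantees that $\{|\psi|<A\}$ is reachable from $N$ by short paths staying inside $M$. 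Nor is the parallel copy $N$ necessarily a level set of $\psi$, so $\psi$ need not even be a defining function for the partition by $N$ as Theorem \ref{thm:quantitativeIndex} requires.

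\textbf{How the paper closes it.} The missing idea is the choice of potential in Theorem \ref{thm noncpt bandwidth}.
Take $N$ to be a parallel copy of $\partial_-M$ in the uniform collar, and let $\rho$ be the truncated signed distance to $N$. Set $\psi=f\circ\rho_\varepsilon$, where $f$ solves the cut-off ODE of Lemma \ref{lem psi exist}: it follows $r\tan(\cdot)$ in the middle and becomes constant $\pm A_\varepsilon$, with $A_\varepsilon\to\infty$, before blowing up. (A function blowing up at $\partial_\pm M$ cannot simply be ``extended''.) Choose $\mathcal N_\psi=\overline{\rho^{-1}((-\ell_2/2,\ell_2/2))}$ as in \eqref{eq choice N psi}.

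Minimizing geodesics to $N$ then stay in $\mathcal N_\psi$, so $N$ is a net in $(\bar B_r(\mathcal N_\psi),d_r)$, and the index already lives in $K_{n-1}(C^*(N,d_r))$. No retraction is needed. What must be shown is that $\id\colon(N,d_{g_N})\to(N,d_r)$ is a coarse equivalence. This follows from hypothesis 1, transported from $\partial_-M$ to $N$ via the collar, through the sandwich $d_g|_N\le d_r|_N\le d_{g_N}$, which holds once $\bar B_r(\mathcal N_\psi)\subset M$. Here $d_r$ is the intrinsic path metric of the neighbourhood; with the metric restricted from $X$, hypothesis 1 would not suffice, which is exactly why the quantitative theorem is needed.

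Since the spectral gap $L$ does not depend on $A_\varepsilon$, the radii $(\delta A_\varepsilon)^{-1}$ and $c_1(A_\varepsilon L)^{-1/2}$ both tend to $0$. So for small $\varepsilon$ both classes live inside $M$ and can be compared via Lemma \ref{lem ind indep choices}(c).

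\textbf{Coefficient error.} Your displayed Riccati inequality has the wrong coefficient. The Lichnerowicz argument gives $\frac{n}{4(n-1)}\Sc_g+\psi^2-|\nabla\psi|\ge\epsilon$, with $\psi\approx\frac{\pi}{\ell'}\tan(\frac{\pi}{\ell'}\phi)$. With $\frac{n}{n-1}\psi^2$ you would obtain the bound $\frac{4\pi^2(n-1)^2}{n^2\ell^2}$, which is below Gromov's sharp constant.
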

The definition of regular $\Spin$ Riemannian bands is given in Definition \ref{def:band}. In particular, there always exists a complete Riemannian manifold $X$ containing $M$. In order to avoid involving the distance function on $X$, we employ the quantitative partitioned manifold theory and localize the partitioned manifold index to $M\subset X$. Then we use the equality \eqref{eq quant ind thm intro} to prove Theorem \ref{thm band width intro}.

\subsection*{Acknowledgements}

PH was supported by the Netherlands Organization for Scientific Research NWO, through grants OCENW.M.21.176 and OCENW.M.23.063. JW was partially supported by NSFC 12501169.

\section{A band-width inequality for noncompact bands}

\subsection{Uniformly embedded submanifolds}\label{sec:submanifolds}

We recall several geometric regularity conditions for noncompact manifolds.

Let $(X^n,g)$ be a complete Riemannian manifold. Assume that $(X,g)$ has \emph{bounded geometry}, i.e.\ 
\begin{enumerate}
	\item the global injectivity radius $\textup{inj}_g=\inf_{x\in X} \textup{inj}_g(x)$ is positive, and
	\item for every $k\geq 0$, the $k$-th covariant derivatives of the Riemannian curvature tensor are uniformly bounded.
\end{enumerate}

In any metric space $(Y, d_Y)$, for $y \in Y$ and $r>0$, we denote the open ball around $y$ of radius $r$ by $B_r(y)$. For $A\subset Y$, we write
\[
\begin{split}
{B}_r(A) &:= {\bigcup_{y \in A} B_r(y)};\\
\bar{B}_r(A) &:= \overline{B_r(A)}
\end{split}
\]
where the line on the right hand side denotes the closure.
We often use this closure, so that the sets $\bar B_r(\mathcal{N}_{\psi})$ used in Definition \ref{def quant index odd} are closed, so they are proper metric spaces and hence have well-behaved Roe algebras. 

The following notion is taken from \cite[Section~2.3]{Normallyhyperbolicinvariantmanifolds}.

\begin{definition}\label{def:uniformlyembedded}
	Let $N^{\,n-1}$ be a complete hypersurface in a complete Riemannian manifold $(X^n,g)$. We say that $N$ is \emph{uniformly embedded} in $X$ if there exists $\delta>0$ such that, for every $x\in N$, the intersection $B_\delta(x)\cap N$ can be represented, in normal coordinates at $x$, as the graph of a function
	\[
	h\colon T_xN \longrightarrow (T_xN)^\perp,
	\]
	whose derivatives of all orders are uniformly bounded.
\end{definition}

It is easy to see that a uniformly embedded hypersurface $(N^{n-1},g_N=g|_N)$ is itself a complete Riemannian manifold of bounded geometry. Moreover, any uniformly embedded hypersurface admits a uniform tubular neighborhood. As a standing assumption, we suppose that all manifolds we consider are oriented. In particular, this means that all hypersurfaces have trivial normal bundles.
\begin{theorem}[{\cite[Theorem~2.31]{Normallyhyperbolicinvariantmanifolds}}]\label{thm:uniformTubular}
	If $N^{n-1}\subset X^n$ is a uniformly embedded hypersurface, then there exist constants $\varepsilon>0$ and $C>0$ such that the following hold.
	\begin{enumerate}
		\item The map 
		\[
		F\colon N \times (-\varepsilon,\varepsilon) \longrightarrow  X,\qquad (x,t)\mapsto \exp_x(t n_x),
		\]
		is a diffeomorphism onto its image $U$, where $n_x$ denotes the unit normal vector to $N$ in $X$ at $x$.
		\item The metric $g$ on $U$ satisfies
		\[
		C^{-1}\, F_*(dt^2+g_N)\ \leq\ g|_U\ \leq\ C\, F_*(dt^2+g_N).
		\]
	\end{enumerate}
\end{theorem}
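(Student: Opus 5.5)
The plan is a uniform version of the classical tubular neighbourhood theorem. Constants are obtained by working in normal coordinates of a fixed size $\delta$ around each point of $N$. Definition \ref{def:uniformlyembedded} and bounded geometry of $X$ make every local estimate uniform in $x\in N$.

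\textbf{Step 1: $F$ is a local diffeomorphism with uniform control.} The differential of $F$ at $(x,t)$ is computed via Jacobi fields along the normal geodesic $\gamma_x(s)=\exp_x(sn_x)$. The $\partial_t$ direction maps to $\gamma_x'(t)$. A tangent vector $v\in T_xN$ maps to $J(t)$, where $J$ is the Jacobi field with
\[
J(0)=v,\qquad J'(0)=-S_x v,
\]
and $S_x$ is the shape operator of $N$. The uniform $C^2$ bounds on the graphing functions $h$ bound $S_x$ uniformly. Bounded geometry gives a uniform bound on the curvature of $X$. A Rauch/Gronwall comparison on the Jacobi equation then gives, for $|t|\le\varepsilon_0$ with $\varepsilon_0$ independent of $x$,
\[
\tfrac12|v|\le |J(t)|\le 2|v|.
\]
Gauss's lemma makes $J(t)$ orthogonal to $\gamma_x'(t)$ up to a controlled error. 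Together these yield the two-sided bound in part (2) with a uniform $C$, on all of $N\times(-\varepsilon_0,\varepsilon_0)$. In particular $dF$ is invertible there.

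\textbf{Step 2: injectivity for small $\varepsilon$.} This is the main obstacle, since local invertibility does not rule out distant sheets of $N$ coming close to each other. Suppose $F(x,t)=F(y,s)$ with $|t|,|s|<\varepsilon$ and $(x,t)\neq(y,s)$. Then $d_X(x,y)<2\varepsilon$. If $2\varepsilon<\delta$, then $y\in B_\delta(x)$, and uniform embeddedness says $B_\delta(x)\cap N$ is a single graph over $T_xN$. This part is essential: it excludes other sheets of $N$ entering $B_\delta(x)$. Inside this one chart, work in normal coordinates at $x$, which are uniformly comparable to Euclidean coordinates by bounded geometry. There $F$ is a map from a uniform neighbourhood of $(0,0)$ in $\R^{n-1}\times\R$. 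Its derivative is uniformly close to the identity and its $C^2$ norm is uniformly bounded. The quantitative inverse function theorem then gives injectivity on a ball of uniform radius $r_0$. Since $y$ lies within distance $\lesssim\varepsilon$ of $x$ in the graph parametrisation, choosing $\varepsilon\ll r_0$ places both preimages in that ball, which is a contradiction.

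\textbf{Step 3: conclusion.} Injectivity together with the local diffeomorphism property makes $F$ a diffeomorphism onto its open image $U$, which is part (1). Part (2) is the restriction of the Step 1 estimate. The pushforward statement is just $F$ transporting $dt^2+g_N$. Comparing the norms of $dF$ applied to $(v,\tau)$ with $|v|^2+\tau^2$ gives the inequality $C^{-1}F_*(dt^2+g_N)\le g\le CF_*(dt^2+g_N)$.
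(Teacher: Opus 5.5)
The paper contains no proof of this statement. It is imported from Eldering's book, which proves it in greater generality (submanifolds of arbitrary codimension and finite smoothness). Your proposal is therefore a self-contained proof of the special case the paper actually uses: smooth hypersurfaces whose graphing functions have bounded derivatives of all orders. It is correct.

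Your route is the standard one, in two stages.
\begin{enumerate}
\item Control $dF$ with Jacobi fields, using the uniform bounds on the shape operator and on the curvature. This gives the metric comparison.
\item Prove injectivity with a quantitative inverse function theorem in a single normal-coordinate chart of uniform size.
\end{enumerate}
You also correctly identify where the global form of the definition enters. It matters that all of $B_\delta(x)\cap N$, not just the component through $x$, is a single graph. With only the weaker condition, distant sheets of $N$ could approach one another in $X$. The local estimates of Step 1 would survive, but no uniform $\varepsilon$ would give injectivity.

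Two small points tighten the argument.

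First, your appeal to the Gauss lemma needs no error term. We have
\[
\frac{d^2}{dt^2}\langle J,\gamma_x'\rangle=-\langle R(J,\gamma_x')\gamma_x',\gamma_x'\rangle=0,
\]
and also
\[
\langle J(0),\gamma_x'(0)\rangle=\langle J'(0),\gamma_x'(0)\rangle=0,
\]
the second because $S_xv\in T_xN$. Hence $J(t)\perp\gamma_x'(t)$ exactly, and your bounds $\tfrac12|v|\le|J(t)|\le 2|v|$ give part (2) with $C=4$.

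Second, in Step 2 the localisation of $y$ comes from $h(\xi)\perp\xi$: this gives $|\xi|\le|\xi+h(\xi)|=d_g(x,y)<2\varepsilon$. The uniform $C^2$ bound on the chart expression of $F$ uses $C^3$ bounds on $h$, since $n_y$ involves $dh$. It also uses uniform control of $\exp$ in normal coordinates, which follows from bounded geometry. Both are available from the hypotheses.

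Citing the result buys the paper generality and brevity. Your direct argument buys explicit constants and makes clear which hypothesis is responsible for which conclusion.
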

	We call a set $U$ as in Theorem \ref{thm:uniformTubular} a \emph{uniform geodesic normal neighborhood} of $N\subset X$.

We remark that, although the metric tensors $g|_U$ and $dt^2 + g_N$ appear similar on $U$, the Riemannian distance function on $X$ restricted on $U$ is not necessarily bi-Lipschitz equivalent to the product distance function. This subtle distinction plays an essential role in our main theorems, and we will provide explicit examples to illustrate it (see Example \ref{ex quant index thm} and Subsection \ref{sec cond 1 necessary}).

\subsection{The main result}

Our goal in this paper is to prove a noncompact version of the band-width inequality for Riemannian bands.

\begin{definition}\label{def:band}
	Let $(M^n,g)$ be a connected Riemannian manifold with boundary 
	\(\partial M=\partial_-M\sqcup\partial_+M\), where $\partial_{\pm}M$ are unions of connected components of $\partial M$.
	We say that $(M^n,g)$ is a \emph{regular Riemannian band} if it admits an isometric embedding into a complete Riemannian manifold $(X,g)$ with bounded geometry, such that $\partial_\pm M$ are uniformly embedded hypersurfaces.
\end{definition}

\begin{remark}
	If $(M,g)$ has bounded geometry in the sense of manifolds with boundary, and $\partial_\pm M$ are uniformly embedded hypersurfaces (analogous to Definition~\ref{def:uniformlyembedded}), then $(M,g)$ is automatically regular: one may take
	\[
	X=(\partial_-M\times(-\infty,0])\cup_{\partial_-M} 
	M
	\cup_{\partial_+M} (\partial_+M\times[0,+\infty)),
	\]
	with a metric smoothly interpolating between $g$ and the product metrics near $\partial_\pm M$ as in \cite[Theorem~1.9]{MR4937352}.
\end{remark}

For any Riemannian manifold $(M, g)$, we denote the Riemannian distance on $M$ by $d_g$. If $N \subset M$ is a submanifold, then we denote the restriction of $g$ to $N$ by $g_N$. The difference between the distance function $d_{g_N}$ and the restriction $d_g|_N$ of the Riemannian distance on $M$ to $N$ will play an important role throughout this paper.
\begin{theorem}\label{thm:bandwidthForBand}
	Let $(M^n,g)$ be a regular $\Spin$ Riemannian band with boundary $\partial_- M \sqcup \partial_+ M$. 
	Let $\ell$ denote the distance between $\partial_-M$ and $\partial_+M$ in $M$.
	Suppose that
	\begin{enumerate}
		\item the identity map
		\[
		\id\colon (\partial_-M,d_{g_{\partial_-M}})
		\to (\partial_-M,d_g|_{\partial_- M})
		\]
		is a coarse equivalence, and
		\item the Dirac operator $D_{\partial_-M}$ has non-zero index in $K_{n-1}(C^*(\partial_-M))$.
	\end{enumerate}
	Then
	\beq{eq band width ineq}
	\inf_{x\in M}\Sc_g(x)\le\frac{4\pi^2(n-1)}{n\ell^2}.
	\eeq
\end{theorem}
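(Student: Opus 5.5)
We argue by contradiction, following the Callias-type strategy of Cecchini and Zeidler, combined with the quantitative partitioned manifold index theorem (Theorem \ref{thm:quantitativeIndex}). Suppose $\inf_{x\in M}\Sc_g(x) > \frac{4\pi^2(n-1)}{n\ell^2}$. After shrinking $\ell$ slightly, we may assume $\Sc_g \geq \frac{4\pi^2(n-1)}{n\ell'^2}$ for some $\ell' < \ell$, still with strict room to spare. Embed $M$ into the complete bounded-geometry manifold $X$ given by Definition \ref{def:band}. Put $N := \partial_-M$, which is uniformly embedded. Let $\Phi$ be the signed distance-type function used by Cecchini and Zeidler:
\[
\phi = \frac{n\pi}{\ell'}\cdot\frac{1}{?}\tan\Bigl(\tfrac{\pi}{\ell'}(d(\cdot,\partial_-M)-\ell'/2)\Bigr)\text{-type},
\]
suitably smoothed and Lipschitz-controlled. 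Concretely, we take $\psi$ to be a smoothing of $x\mapsto d_g(x,\partial_-M)$, chosen with $|d\psi|\le 1+\epsilon$. We then set $\phi=f\circ\psi$, where $f$ solves the Riccati-type ODE $\frac{n}{n-1}f^2 - 2f' \leq \Sc$-margin and blows up at both ends. Extending $\phi$ to $X$ as $\pm$ large constants beyond $M$ (or cutting off near $\partial_\pm M$, using the collar $U$ from Theorem \ref{thm:uniformTubular}) produces a Callias-type operator $B = D + \phi\,\epsilon$ on $X$, or on the appropriate doubled bundle. The partitioning hypersurface is a level set $\mathcal N_\psi$ isotopic to $\partial_-M$.

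The key analytic step is the spectral estimate. We use the Lichnerowicz formula $B^2 = \nabla^*\nabla + \frac{\Sc}{4} + \phi^2 + c(d\phi)$, with the Penrose/Friedrich-type refinement (the $\frac{n}{n-1}$ trick) used by Cecchini. Together with the ODE for $f$, this shows $B^2 \geq c>0$ everywhere, so $B$ is invertible. By the definition of the quantitative index (Definitions \ref{def quant index odd}, \ref{def quant index even}), invertibility of the Callias operator supported in $M$ forces $\ind_q(D,\mathcal N_\psi)=0$ in $K_*(C^*(M))$. By contrast, Theorem \ref{thm:quantitativeIndex} gives
\[
\ind_q(D,\mathcal N_\psi) = (i_N)_*\ind(D_{\partial_-M}) \in K_{n-1}(C^*(M)).
\]

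It remains to show that $(i_N)_*$ is injective on $\ind(D_{\partial_-M})$. This is where hypothesis (1) enters. By (1), the inclusion $(\partial_-M,d_{g_{\partial_-M}})\to (M,d_g)$ is a coarse embedding onto its image, since $d_g|_M \le d_{g}$ restricted from $X$ and $M$ is a band. Moreover, $M$ retracts coarsely onto $\partial_-M$: the nearest-point projection along the collar, composed with any coarse map $M\to\partial_-M$ (for example, sending $x$ to a point of $\partial_-M$ within distance $\ell$-ish along a minimizing path), is a coarse left inverse to $i_N$. We would build this retraction $r\colon M\to \partial_-M$ explicitly, sending $x$ to a closest point in $\partial_-M$. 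Its controlledness follows because $d_g(r(x),r(y)) \le d_g(x,y)+2\sup d(\cdot,\partial_-M)$, and (1) upgrades $d_g$-control to $d_{g_{\partial_-M}}$-control. Then $r_*\circ (i_N)_* = \id$ on $K_*(C^*(\partial_-M))$, so $(i_N)_*\ind(D_{\partial_-M})\neq 0$, which is a contradiction.

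The main obstacle is the case where $M$ has unbounded width, i.e.\ $d(x,\partial_-M)$ is not bounded on $M$, so that the retraction $r$ is not coarse. To handle this, we work not in all of $M$ but in the closed set $\{\psi\le \ell'\}$, where $\ell'$ is the truncation produced by the blow-up of $f$. Before $\psi$ reaches the boundary region, $\phi$ becomes large and constant; there $B$ is invertible for trivial reasons, and the index localizes to $\bar B_{\ell'}(\partial_-M)\cap M$. On this bounded-width set, the retraction is coarse. Ensuring that Theorem \ref{thm:quantitativeIndex} can be applied with $M$ replaced by exactly this neighborhood, and that the sharp constant survives the smoothing of $\psi$, is the delicate point.
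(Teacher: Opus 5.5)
Your outline is essentially the paper's argument, which goes through Theorem~\ref{thm noncpt bandwidth}. Its ingredients are:
\begin{itemize}
\item a truncated Riccati potential $\psi=f\circ\rho_\varepsilon$ (your $\phi$) with amplitude $A_\varepsilon\to\infty$ and $\frac{n}{4(n-1)}\Sc_g+\psi^2-|\nabla\psi|\geq\delta/2$ (your displayed Riccati inequality has the sign reversed);
\item invertibility of $D\pm i\psi$ via Lichnerowicz and the $\frac{n}{n-1}$ estimate;
\item vanishing of $\ind_q(D,\psi)$ by Lemma~\ref{lem part 2}, and its identification with $\ind(D_N)$ by Theorem~\ref{thm:quantitativeIndex};
\item injectivity on a bounded-width neighbourhood from condition~1, where your nearest-point retraction is an equivalent substitute for the paper's coarse-equivalence argument.
\end{itemize}

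The points you leave open are handled in the paper as follows. The partitioning hypersurface is the pushed-in copy $N=\{\varepsilon_1\}\times\partial_-M$, not $\partial_-M$ itself and not the zero set of the potential. An off-centre signed distance $\rho$ is used, so that $\mathcal N_\psi=\overline{\rho^{-1}((-\ell_2/2,\ell_2/2))}\subset M$ is fixed independently of $A_\varepsilon$. Hence both margins $(\delta A_\varepsilon)^{-1}$ and $c_1(A_\varepsilon L)^{-1/2}$ shrink into $\rho^{-1}((-\ell_1/2,\ell_1/2))$. Conditions~1 and~2 are transferred from $\partial_-M$ to $N$ through the collar, and finally one lets $\varepsilon_1\to 0$.
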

The first assumption in Theorem \ref{thm:bandwidthForBand} implies that $\partial_-M$ is connected: otherwise, the different connected components of $\partial_-M$ are at infinite distance to each other with respect to $d_{g_{\partial_-M}}$, but not with respect to $d_g$.

The bound in \eqref{eq band width ineq} is already sharp in the compact case, as pointed out below Conjecture \ref{con band width cpt}, so also in this more general setting.

\begin{example}
As in Theorem \ref{thm band width cpt}, let $X$ be a complete, $n$-dimensional Riemannian $\Spin$-manifold of dimension $n$, and let $N \subset X$ be a \emph{compact} hypersurface with trivial normal bundle. Suppose that the map $\pi_1(N) \to \pi_1(X)$ induced by the inclusion is injective. This implies that the universal cover $\tilde N$ of $N$ embeds into the universal cover $\tilde X$ of $X$. Let $M \subset X$ be a relatively compact neighborhood of $N$ of width $l$, whose boundary decomposes as $\partial M = \partial_-M \sqcup \partial_+M$, where $\partial_{\pm}M$ are compact hypersurfaces.  Then the universal cover $\tilde M$ of $M$ is a regular $\Spin$ Riemannian band. Suppose that $\bar M = M_- \cup M_+$, where $M_- \cap M_+ = N$,  $\partial(M_{\pm}) = \partial_{\pm}M \sqcup N$, and $M_-$ defines a cobordism between $\partial_-M$ and $N$.

Suppose that the coarse index of the Dirac operator $D_{\tilde N}$ on $\tilde N$ is nonzero. This index equals the coarse index of $D_{\partial \tilde M_-}$ by the cobordism defined by the universal cover $\tilde M_-$ of $\tilde M_-$ (see \cite{Wulff12}, \cite[Theorem 4.12]{Wulff19}, \cite[Corollary 2.27]{HochsdeKok25}), which is therefore also nonzero. 
The first condition in Theorem \ref{thm:bandwidthForBand} now holds automatically for $\tilde M$, by relative compactness of $M$.  This theorem therefore implies that 
\beq{eq inf Sc tilde M}
	\inf_{x\in M}\Sc_{g}(x) = \inf_{\tilde x\in \tilde M}\Sc_{\tilde g}(\tilde x)  \le\frac{4\pi^2(n-1)}{n\ell^2},
\eeq
where $\tilde g$ is the lift of $g$ to $\tilde X$.
In other words, Theorem \ref{thm:bandwidthForBand} reduces to a version of Theorem \ref{thm band width cpt} with nonvanishing of the Rosenberg index replaced by nonvanishing of the coarse  index of $D_{\tilde N}$.

For a result resembling Theorem \ref{thm band width cpt} more closely, 
suppose that the equivariant coarse index 
\beq{eq equivar coarse index}
\ind_{\pi_1(N)}(D_{\tilde N}) \in K_{n-1}(C^*(\tilde N)^{\pi_1(N)}) = K_{n-1}(C^*(\pi_1(N)))
\eeq
 is nonzero. This condition is slightly stronger than the nonvanishing of the Rosenberg index of $N$, which is an analogue of this index in $KO_{n-1}(C^*(\pi_1(N)))$.

A variation on the second condition in Theorem \ref{thm:bandwidthForBand} now holds: the equivariant index coarse of $D_{\partial_- \tilde M}$ equals \eqref{eq equivar coarse index} by the cobordism defined by $\tilde M_-$, and is hence nonzero.
Theorem \ref{thm:bandwidthForBand} immediately generalizes  to the case where $D_{\partial_-M}$ has non-zero index in $K_{n-1}(C^*(\partial_-M)^{\Gamma})$, for some discrete, finitely generated group $\Gamma$ acting properly and freely on $M$, preserving all structure. This version of the theorem implies \eqref{eq inf Sc tilde M}.
\end{example}

We prove Theorem \ref{thm:bandwidthForBand} in Section \ref{sec proof bandwidth}.  The main technical tool for this proof is a \emph{quantitative partitioned manifold index theorem} for possibly noncompact hypersurfaces, Theorem \ref{thm:quantitativeIndex}.

\section{A partitioned manifold index theorem for possibly noncompact hypersurfaces}

We state a variation of the main result from \cite{HochsdeKok25}, a generalization of Roe's partitioned manifold index theorem \cite{Higson91, Roe1988dualtoeplitz, Roe1996indextheory}, in which the partitioning hypersurface may be noncompact. Related results were obtained earlier in \cite{bunke2024coronas, Engel2025relative}. The result is Theorem \ref{thm:recallPartitionIndex}. Compared to Theorem 2.23 in \cite{HochsdeKok25}, the construction of the index is slightly different, and the result in this paper applies to both even- and odd-dimensional hypersurfaces. We point out why this result is \emph{not} enough to prove Theorem \ref{thm:bandwidthForBand}, which motivates the generalization of  Theorem \ref{thm:recallPartitionIndex} to Theorem \ref{thm:quantitativeIndex}. We give a direct proof of Theorem \ref{thm:recallPartitionIndex}  in a special case (Proposition \ref{prop:product}), which will be used in the proof of Theorem \ref{thm:quantitativeIndex} in Section \ref{sec quant index}. 

\subsection{Roe algebras and coarse maps}\label{sec Roe alg}

We now recall the definitions of Roe algebras and their localized variants. These algebras play a central role in index theory, as the indices of differential operators naturally take values in their $K$-theory.

\begin{definition}
	Let $X$ be a proper metric space (meaning that closed balls are compact). A Hilbert space $H$ is an \emph{$X$-module} if $H$ carries a nondegenerate representation of $C_0(X)$ (meaning that no nonzero vector in $H$ is mapped to zero by every element of $C_0(X)$). The module $H$ is called \emph{ample} if no nonzero function in $C_0(X)$ acts as a compact operator.
\end{definition}

For example, if $X$ is a complete Riemannian manifold, then $H = L^2(X,E)$ for a Hermitian vector bundle $E\to X$ gives an ample $X$-module.

\begin{definition}\label{def:propagation}
	Let $X$ be a proper metric space and $H$ an ample $X$-module.
	\begin{enumerate}
		\item An operator $T\in B(H)$ is \emph{locally compact} if $fT$ and $Tf$ are compact for all $f\in C_0(X)$.
		\item An operator $T\in B(H)$ has \emph{finite propagation} if there exists $P>0$ such that $fTg=0$ whenever $f,g\in C_c(X)$ satisfy $d(\mathrm{supp}(f),\mathrm{supp}(g))>P$.
        \item If $M\subset X$ is a closed subset, we say that $T\in B(H)$ is \emph{supported in $M$} if $fT=Tf=0$ for any $f\in C_c(X)$ supported away from $M$.
		\item If $N\subset X$ is a subset, we say that $T\in B(H)$ is \emph{supported near $N$} if there exists $R>0$ such that $T$ is supported in the closed $R$-neighborhood  $\bar B_R(N)$ of $N$. 
	\end{enumerate}
	The \emph{Roe algebra} $C^*(X)$ is the norm closure in $B(H)$ of the algebra of  locally compact operators with finite propagation.  
	If $N \subset X$, then the \emph{localized Roe algebra} $C^*(N\subset X)$ is the norm closure of all locally compact, finite-propagation operators supported near $N$.
\end{definition}
The Roe algebra $C^*(X)$ is independent of the particular choice of ample $X$-module; see \cite[Lemma~5.1.12]{higherindex}.

If $X$ is a complete $\Spin$-manifold of dimension $n$, then the $\Spin$-Dirac operator $D_X$ on $X$ has a \emph{coarse index}
\[
\ind(D_X) \in K_n(C^*(X)),
\]
as defined in \cite[Definition~3.7]{Roe1996indextheory} or \cite[Section~2.3.1]{XieYu14}.

A key property of Roe algebras is their invariance under coarse equivalence. We recall the basic notions.
\begin{definition}
	Let $(X, d_X)$ and $(Y, d_Y)$ be proper metric spaces, and let $f\colon X\to Y$ be a proper map.
	\begin{enumerate}
		\item The map $f$ is a \emph{coarse map} if there exists a nondecreasing function $\rho_+\colon \mathbb{R}_{\ge 0}\to \mathbb{R}_{\ge 0}$ with $\lim_{r\to \infty}\rho_+(r)=\infty$ such that
		\[
		d_Y(f(x_1),f(x_2)) \le \rho_+(d_X(x_1,x_2))
		\]
		for all $x_1,x_2\in X$.
		\item The map $f$ is a \emph{coarse embedding} if there exist nondecreasing functions $\rho_\pm\colon \mathbb{R}_{\ge 0}\to \mathbb{R}_{\ge 0}$ with $\lim_{r\to\infty}\rho_-(r)=\infty$ such that
		\[
		\rho_-(d_X(x_1,x_2)) \le d_Y(f(x_1),f(x_2)) \le \rho_+(d_X(x_1,x_2))
		\]
		for all $x_1,x_2\in X$.
		\item A coarse embedding $f$ is a \emph{coarse equivalence} if $f(X)$ is a \emph{net} in $Y$, i.e., there exists $R>0$ such that $Y = \bar B_R(f(X))$.
	\end{enumerate}
\end{definition}

\begin{theorem}[{\cite[Theorem~5.1.15]{higherindex}}]\label{thm:coarseMap}
	Let $X$ and $Y$ be proper metric spaces, and let $f\colon X\to Y$ be a coarse map. Then $f$ naturally induces a homomorphism
	\[
	f_*\colon K_*(C^*(X)) \longrightarrow K_*(C^*(Y)).
	\]
	This construction is functorial. 
	Moreover, if $f$ is a coarse equivalence, then $f_*$ is an isomorphism.
\end{theorem}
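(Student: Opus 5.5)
We follow the standard argument (as in \cite[Section~5.1]{higherindex}), which goes through \emph{covering isometries}. Fix ample modules $H_X$ and $H_Y$ defining $C^*(X)$ and $C^*(Y)$. The first step is to construct an isometry $V\colon H_X\to H_Y$ that \emph{covers} $f$. By this we mean there is $R>0$ such that $\chi_B V\chi_A=0$ whenever $A\subset X$, $B\subset Y$ are Borel with $d_Y(B,f(A))>R$.

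To build $V$, take a countable Borel partition $Y=\bigsqcup_j Y_j$ into pieces of diameter at most $1$, each with nonempty interior. Pull this back to the partition $X=\bigsqcup_j X_j$ with $X_j:=f^{-1}(Y_j)$; the $X_j$ have uniformly bounded diameter because $f$ is a coarse map and $\rho_-$ is not needed here. Ampleness of $H_Y$ makes each $\chi_{Y_j}H_Y$ infinite dimensional. Hence we can choose isometries $V_j\colon \chi_{X_j}H_X\to\chi_{Y_j}H_Y$ and set $V:=\bigoplus_j V_j$. This $V$ covers $f$ with $R=1$, since $\supp V\subset\bigcup_j Y_j\times X_j$.

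Next we show that $\mathrm{Ad}_V\colon T\mapsto VTV^*$ maps $C^*(X)$ into $C^*(Y)$.
\begin{itemize}
\item \emph{Finite propagation.} If $T$ has propagation at most $P$, then $VTV^*$ has propagation at most $\rho_+(P+c)+2$, where the constant $c$ bounds the diameters of the $X_j$.
\item \emph{Local compactness.} For $g\in C_c(Y)$, only finitely many $Y_j$ meet $\supp g$ when the partition is chosen locally finite, which is possible since $Y$ is proper. Properness of $f$ then makes $\bigcup\{X_j : Y_j\cap\supp g\neq\emptyset\}$ relatively compact. Therefore $gVTV^*=gV\chi_K T V^*$ for some compact $K\subset X$, and this operator is compact.
\end{itemize}
Thus $\mathrm{Ad}_V$ is a $*$-homomorphism $C^*(X)\to C^*(Y)$, and we define $f_*:=(\mathrm{Ad}_V)_*$.

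The step I expect to be the main obstacle is showing that $f_*$ does not depend on $V$. Suppose $V_1,V_2$ both cover $f$, or more generally cover maps at finite distance from $f$. Then $U:=V_2V_1^*$ is a partial isometry on $H_Y$ of finite propagation, so it lies in the multiplier algebra of $C^*(Y)$, and $\mathrm{Ad}_{V_2}=\mathrm{Ad}_U\circ\mathrm{Ad}_{V_1}$. The standard rotation trick then finishes the argument. One conjugates
\[
\begin{pmatrix} a&0\\0&0\end{pmatrix}\quad\text{to}\quad\begin{pmatrix}UaU^*&0\\0&0\end{pmatrix}
\]
by the unitary
\[
\begin{pmatrix}U&1-UU^*\\1-U^*U&U^*\end{pmatrix},
\]
which lies in the multiplier algebra of $M_2(C^*(Y))$ and is connected to the identity there. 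Hence $\mathrm{Ad}_U$ induces the identity on $K$-theory, and $f_*$ is well defined. The same argument shows that close coarse maps induce the same homomorphism.

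Functoriality then follows from the observation that if $V_f$ covers $f$ and $V_g$ covers $g$, then $V_gV_f$ covers $g\circ f$. For a coarse equivalence $f$, we choose a coarse inverse $h\colon Y\to X$, using the net condition and $\rho_-$ to make $h$ proper and coarse. Both $h\circ f$ and $f\circ h$ are close to the identity maps, so by closeness-invariance and functoriality, $h_*f_*=\id$ and $f_*h_*=\id$. Hence $f_*$ is an isomorphism.
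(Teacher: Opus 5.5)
Your proposal is the standard covering-isometry proof from \cite[Section~5.1]{higherindex}. The paper gives no proof of its own and simply cites Theorem~5.1.15 there, so you are following the intended route, and the overall structure is sound: build $V$, show $\mathrm{Ad}_V$ preserves Roe algebras, prove independence of $V$ using the unitary $W$, then get functoriality and use a coarse inverse.

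One assertion is false, though it turns out to be harmless. For a mere coarse map, the sets $X_j=f^{-1}(Y_j)$ need not have uniformly bounded diameter. For example, take $f(x)=\log(1+x)$ on $[0,\infty)$ and $Y_j=[j,j+1)$; then $\operatorname{diam}X_j=e^j(e-1)$. A uniform bound would need lower control of the type $\rho_-$. You do not need it, however. If $\chi_{X_j}T\chi_{X_k}\neq 0$, then $d(X_j,X_k)\le P$. So there are $x\in X_j$ and $x'\in X_k$ with $d(x,x')<P+1$, which gives $d(Y_j,Y_k)\le\rho_+(P+1)$. Hence $VTV^*$ has propagation at most $\rho_+(P+1)+2$, and the constant $c$ can simply be dropped.

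Three smaller points should be made explicit.
\begin{enumerate}
\item Conjugation by $W$ sends $\mathrm{diag}(a,0)$ to $\mathrm{diag}(UaU^*,0)$ only when $a=U^*UaU^*U$. This does hold for $a$ in the image of $\mathrm{Ad}_{V_1}$, because $U^*U=V_1V_1^*$. For the same reason, "$\mathrm{Ad}_U$ induces the identity" should be read on that corner, since $\mathrm{Ad}_U$ is not multiplicative on all of $C^*(Y)$.
\item Your claim that $W$ is connected to $1$ is true, with a one-line proof. $W$ is the product of the scalar swap matrix and the self-adjoint unitary $\begin{pmatrix}1-U^*U&U^*\\ U&1-UU^*\end{pmatrix}$, and both factors are connected to $1$.
\item The coarse inverse $h$ is in general not continuous, so it is not a proper map in the topological sense. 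You should choose it Borel, for instance constant on the pieces of a Borel partition of $Y$. Then note that your covering-isometry construction and the well-definedness argument only use Borel measurability and boundedness of preimages of bounded sets, both of which $h$ has.
\end{enumerate}
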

We will often use the special case of Theorem \ref{thm:coarseMap} where $X$ is a closed subset of $Y$, and $f$ is the inclusion map. To be more precise,
let $(Y, d_Y)$ be a proper metric space, and let $X\subset Y$ be a closed subset that equals the closure of its interior. Let $d_X$ be a distance function on $X$ that induces the same topology on $X$ as the restriction of $d_Y$, and such that for all $x,x' \in X$,
\[
d_Y(x,x') \leq d_X(x,x').
\]
It follows from this  inequality and closedness of $X$ that $(X, d_X)$ is a proper metric space. And the inclusion map $i_X\colon (X, d_X) \to (Y, d_Y)$ is continuous.

Let $H_Y$ be an ample $Y$-module. 
 The representation $C_0(Y) \to \cB(H_Y)$ extends to $L^{\infty}(Y)$ (with respect to a Borel measure), and in particular to the indicator function $1_X$ of $X$. We write $H_X = 1_X \cdot H_Y$. One can check directly, using the fact that $X$ equals the closure of its interior, that $H_X$ is an ample $X$-module.
 
Consider the Roe algebra $C^*(Y)$ defined on $H_Y$, and the Roe algebra $C^*(X)$ defined on $H_X$, with respect to the distance function $d_X$. Let $j\colon H_X \to H_Y$ be the inclusion map. Define the norm-continuous linear map $\iota_X\colon \cB(H_X) \to \cB(H_Y)$ by
\[
\iota_X(T) := j T j^*, 
\]
for $T \in \cB(H_X)$.
\begin{lemma}\label{lem funct map incl}
The map $\iota_X$ restricts to an injective $*$-homomorphism $\iota_X\colon C^*(X) \to C^*(Y)$, and the induced map on $K$-theory is the map $(i_X)_*$ from Theorem \ref{thm:coarseMap}.
\end{lemma}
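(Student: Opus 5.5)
The proof has three parts. First, $\iota_X$ is an injective $*$-homomorphism on all of $\cB(H_X)$. Second, it maps $C^*(X)$ into $C^*(Y)$. Third, the induced map on $K$-theory agrees with the map $(i_X)_*$ of Theorem \ref{thm:coarseMap}, by identifying $j$ as a covering isometry for $i_X$.

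\textbf{Algebraic properties.} Since $j$ is an isometric inclusion, $j^*j = \id_{H_X}$ and $jj^* = 1_X$ as an operator on $H_Y$. Hence
\[
\iota_X(S)\iota_X(T) = jSj^*jTj^* = jSTj^* = \iota_X(ST)
\]
and $\iota_X(T)^* = \iota_X(T^*)$. Injectivity follows from $j^*\iota_X(T)j = T$.

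\textbf{Image lies in $C^*(Y)$.} By continuity it suffices to treat locally compact, finite propagation $T \in \cB(H_X)$. The $C_0(X)$-action on $H_X$ is the restriction of the $C_0(Y)$-action: for $f \in C_0(Y)$ we have $f j = j (f|_X)$. The restriction $f|_X$ lies in $C_0(X)$, because $X$ is closed and $(X,d_X)$ carries the same topology as the restriction of $d_Y$. Thus
\[
f\,\iota_X(T) = j\,(f|_X)\,T\,j^*,
\]
which is compact, and similarly $\iota_X(T)f$ is compact, so $\iota_X(T)$ is locally compact.

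For finite propagation, let $T$ have propagation $P$ with respect to $d_X$, and take $f,g \in C_c(Y)$ with $d_Y(\supp f, \supp g) > P$. Then $d_X(\supp f|_X, \supp g|_X) \ge d_Y(\cdot,\cdot) > P$, using $d_Y \le d_X$, so $f\,\iota_X(T)\,g = j\,(f|_X)\,T\,(g|_X)\,j^* = 0$.

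There is one subtlety: $f|_X$ need not have compact support in $(X,d_X)$. However, closed $d_X$-bounded sets are $d_Y$-bounded and closed in $Y$, hence compact, and properness transfers the other way via the topology. Since $\supp f|_X$ is compact in $Y$ and the two topologies on $X$ agree, it is compact in $X$. So the propagation condition, stated for $C_c$ functions, applies.

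\textbf{Identification on $K$-theory.} Recall how $f_*$ is constructed in \cite[Section 5.1]{higherindex}: one chooses an isometry $V\colon H_X \to H_Y$ that covers $f$, meaning that $\supp V$ lies within bounded $d_Y$-distance of the graph of $f$. Conjugation $T \mapsto VTV^*$ then maps $C^*(X)$ to $C^*(Y)$, and the induced map on $K$-theory is independent of the choice of covering isometry.

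It therefore suffices to show that $j$ covers $i_X$, and this is the main point to check. If $f \in C_c(Y)$ and $g \in C_c(X)$ with $\supp f \cap \supp g = \emptyset$, then $f j g = j\,(f|_X)\,g = 0$. So $\supp j$ lies in the graph of the inclusion, at propagation $0$.

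The one gap is that the covering isometries in \cite{higherindex} are usually required to be between ample modules, with the conjugation landing in the fixed algebra $C^*(Y)$ on $H_Y$. Both conditions hold here: $H_X$ is ample by the remark preceding the lemma, and the conjugation lands in $C^*(Y)$ on $H_Y$ by the previous step. Hence $(\iota_X)_* = (i_X)_*$.
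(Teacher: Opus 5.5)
Your proposal is correct and follows the paper's route. The paper's proof calls the first claim a ``straightforward verification'', which you carry out correctly using $fj = j(f|_X)$, $d_Y \le d_X$ and the agreement of the two topologies on $X$. For the second claim the paper uses, as you do, that $j$ is an isometry covering $i_X$ (with support on the graph of the inclusion), so $\iota_X = jTj^*$ induces $(i_X)_*$ in the sense of \cite[Definitions 4.3.3 and 5.1.4]{higherindex}.
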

\begin{proof}
The first claim follows by a straightforward verification. The second claim follows from the fact that $j$ is an isometry covering the inclusion $i_X$. (See e.g.\ \cite[Definitions 4.3.3 and 5.1.4]{higherindex}.)
\end{proof}
We will view $C^*(X)$ as a subalgebra of $C^*(Y)$ via the map $\iota_X$, and use the description of the map $(i_X)_*$ in Lemma \ref{lem funct map incl} without referring to this lemma explicitly. If an element of $C^*(Y)$ is supported in $X$ in the sense of point 3 in Definition \ref{def:propagation}, then it lies in the image of $\iota_X$, so we may view it as an element of $C^*(X)$.
We will say that an element  $a \in K_*(C^*(Y))$ is \emph{represented by} an element $b \in K_*(C^*(X))$ if the (possibly non-injective) map $(i_X)_*$  induced by $i_X$ on $K$-theory maps $b$ to $a$.

In cases where $N \subset Y$ is a closed subset, with a distance function $d_N$ such that identity map $\id\colon (N, d_N) \to (N, d_Y|_N)$ is a coarse equivalence, we will use an isomorphism $K_*(C^*(N)) \cong K_*(C^*(N \subset Y))$ in a few places. Let $X \subset Y$  be closed, and suppose that that $N$ lies in the interior of $X$, and that the inclusion  $i_N \colon N \to X$ is a coarse equivalence. Then we have isomorphisms
\beq{eq iso loc Roe}
K_*(C^*(N)) \xrightarrow[\cong]{(i_N)_*} K_*(C^*(X) \xrightarrow[\cong]{((i_{X})_*} K_*(C^*(N \subset Y)).
\eeq
This is Lemma 1 on page 93 of \cite{HRY93}, combined with Theorem \ref{thm:coarseMap} and Lemma \ref{lem funct map incl}. We also used that the image of the map $\iota_X$ in Lemma \ref{lem funct map incl} lies in $C^*(N \subset X)$ in this case.

\subsection{The partitioned manifold  index}\label{sec:3.1}

We first recall the definition of the partitioned manifold index for possibly noncompact hypersurfaces.

\begin{definition}
	Let $X^n$ be a manifold. An embedded codimension-one submanifold $N^{n-1}\subset X^n$ is called a \emph{partitioning hypersurface} if
	\[
	X = X_- \cup X_+,
	\]
	where $X_\pm$ are submanifolds with boundary satisfying
	\[
	N = X_+ \cap X_- = \partial X_\pm .
	\]
\end{definition}

\begin{definition}\label{def:definingFunction}
    Let $X^n$ be a manifold partitioned by $N$. A bounded Lipschitz function $\psi$ on $X$ is called a \emph{defining function} of the partition, if
    \begin{itemize}
        \item the range of $\psi$ is $[-A,A]$ for some $A>0$,
        \item there exists $R>0$ such that $\psi^{-1}((-A,A))\subset B_R(N)$,
        \item $\psi\equiv\pm A$ on $X_\pm\cap\left(X\backslash \psi^{-1}((-A,A))\right)$, and
        \item $N=\psi^{-1}(a)$ for some $a\in (-A,A)$.
    \end{itemize}
\end{definition}
By Theorem~\ref{thm:uniformTubular}, a defining function always exists when $N\subset X$ is a uniformly embedded hypersurface.

If a real vector bundle $E$ over a manifold  has a $\Spin$-structure, then we denote the associated spinor bundle by $S(E)$. 
Let $(X^n,g)$ be a complete $\Spin$ Riemannian manifold with bounded geometry, partitioned by a uniformly embedded hypersurface $N^{n-1}\subset X$. Let $\psi$ be a defining function of $N$. Denote by $D$ the Dirac operator on $X$ acting on  $S(TX)$.  
Then the pair $(D,\psi)$ determines an index class
\[
\ind(D,\psi)\ \in\ K_{n-1}\bigl(C^*(N\subset X)\bigr),
\]
in the ways described below.

\subsubsection*{The odd-dimensional case}

Assume first that $\dim X=n$ is odd. Define
\beq{eq B odd}
B
= 
\begin{pmatrix}
	0 & D\\
	D & 0
\end{pmatrix}
+
\begin{pmatrix}
	0 & -i\psi\\
	i\psi & 0
\end{pmatrix},
\qquad
F=\frac{B}{(1+B^2)^{1/2}}
=
\begin{pmatrix}
	0 & V\\
	U & 0
\end{pmatrix},
\eeq
where
\[
\begin{cases}
	U=(D+i\psi)\,(1+(D^2+\psi^2))^{-1/2},\\[0.3em]
	V=(D-i\psi)\,(1+(D^2+\psi^2))^{-1/2}.
\end{cases}
\]
We denote the multiplier algebra of a $C^*$-algebra $\mathcal{A}$ by $\mathcal{M}(\mathcal{A})$.
Then $F \in \mathcal{M}(C^*(N\subset X))$, and 
\beq{eq F2-1 odd}
F^2-1\ \in\ C^*(N\subset X),
\eeq
see \cite[Proposition~2.13]{HochsdeKok25}.  So this operator defines a class
\[
[F] \in K_1(\mathcal{M}(C^*(N \subset M))/C^*(N \subset M)).
\]
Define
\beq{eq part index odd}
\ind(D,\psi)
= \partial[F]
= [P]-
\left[\begin{pmatrix}
	1&0\\[0.2em]
	0&0
\end{pmatrix} \right]
\ \in\ K_0\bigl(C^*(N\subset X)\bigr),
\eeq
where
\[
P
=
\begin{pmatrix}
	1-(1-UV)^2 & (2-UV)\, U\,(1-VU)\\[0.2em]
	V(1-UV) & (1-VU)^2
\end{pmatrix}.
\]

\subsubsection*{The even-dimensional case}

Now assume that $\dim X=n$ is even.  
The spinor bundle $S(TX)$ carries a natural $\mathbb{Z}_2$-grading with grading operator $\gamma$.  
Define
\beq{eq B even}
B = D + \psi \gamma, 
\qquad 
F = \frac{B}{(1+B^2)^{1/2}}.
\eeq
We define
\beq{eq def p}
p=\frac{F+1}{2},
\eeq
which by \eqref{eq F2-1 odd} satisfies
\beq{eq p^2-p odd}
p^2-p\ \in\ C^*(N\subset X).
\eeq
So it defines
\[
[p] \in K_0(\mathcal{M}(C^*(N \subset M))/C^*(N \subset M)).
\]
Thus we define
\beq{eq part index even}
\ind(D,\psi)
= \partial[p]
= [\exp(2\pi i p)]
\in K_1\bigl(C^*(N\subset X)\bigr).
\eeq

\medskip


\subsection{A partitioned manifold index theorem}

The partitioned manifold index theorem is as follows.

\begin{theorem}\label{thm:recallPartitionIndex}
	Let $(X^n,g)$ be a complete $\Spin$ Riemannian manifold with bounded geometry, partitioned by a uniformly embedded hypersurface $N^{n-1}\subset X$.  
	Let $\psi\colon X\to[-A,A]$ be a defining function for the partition.  
	Let $D$ be the Dirac operator on $X$.  
	If
	\beq{eq cond N equiv}
	\id\colon (N,d_{g_N}) \longrightarrow (N,d_g|_N)
	\quad\text{is a coarse equivalence},
	\eeq
	then 
	\[
	\ind(D,\psi) = \ind(D_N) \in K_{n-1}(C^*(N)) \cong K_{n-1}(C^*(N \subset X)),
	\]
	where $\ind(D_N)$ denotes the coarse index of the Dirac operator $D_N$ on $N$ associated to the $\Spin$-structure induced by the one on $X$.
\end{theorem}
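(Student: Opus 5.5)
The strategy is to reduce to a model product situation near $N$ and then compute the index there by a separation-of-variables argument. Choose the uniform geodesic normal neighborhood $U \cong N\times(-\varepsilon,\varepsilon)$ from Theorem \ref{thm:uniformTubular}. The first step is to check that $\ind(D,\psi)$ does not depend on the choice of defining function $\psi$. If $\psi_0,\psi_1$ are two defining functions, then $\psi_s = (1-s)\psi_0+s\psi_1$ is, after rescaling the range, again a defining function. Moreover $s\mapsto F_s$ is norm-continuous in $\mathcal{M}(C^*(N\subset X))$, because the difference of the potentials is bounded. Hence the classes $[F_s]$ in the quotient coincide, and so do their boundaries. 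We may therefore take $\psi$ to be a function of the normal coordinate $t$ only, constant $\pm A$ outside $U$. Scaling $\psi \mapsto \lambda\psi$ is a homotopy of the same kind, so we may also take $A$ as large as needed.

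The second step is to deform the geometry on $U$ to a product. By Theorem \ref{thm:uniformTubular}, $g|_U$ is uniformly equivalent to $dt^2+g_N$. Bounded geometry lets us connect $g$ to a metric $g'$ that is a product on $N\times(-\varepsilon/2,\varepsilon/2)$ and equals $g$ outside $U$, through metrics that are uniformly bi-Lipschitz to $g$. The Dirac operators for different metrics act on different spinor bundles, but they can be identified by the standard unitary (Bourguignon–Gauduchon) identification. Under this identification, $D_s$ and $D$ differ by a first-order operator with bounded coefficients supported in $U$. The associated bounded transforms $F_s$ vary continuously in $\mathcal{M}(C^*(N\subset X))$ modulo $C^*(N\subset X)$; this is a resolvent-difference estimate using finite propagation and bounded geometry. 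Since the metrics stay bi-Lipschitz, the localized Roe algebra does not change. So $\ind(D,\psi)=\ind(D',\psi)$, where $D'$ is the Dirac operator of $g'$.

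The third step is the product computation, to be stated as Proposition \ref{prop:product}. On the cylinder $N\times\R$ with the product metric, Dirac operator $D' = c(\partial_t)(\partial_t + D_N)$ (suitably graded), and $\psi=\psi(t)$ increasing from $-A$ to $A$, we have $B^2 = D_N^2 + (-\partial_t^2+\psi^2\pm\psi')$. The one-dimensional Callias operator $\partial_t+\psi$ has a one-dimensional kernel spanned by a Gaussian-like function $e^{-\int\psi}$, and its index is $1$. The Callias-type class therefore factors as the external product of $\ind(D_N)$ with this one-dimensional index class, with Bott periodicity absorbed by the switch between odd and even formulas \eqref{eq part index odd}, \eqref{eq part index even}. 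This gives $\ind(D',\psi) = (i_N)_*\ind(D_N)$ in $K_{n-1}(C^*(N\times\R))$. To transfer this to $X$, we need a locality principle: the index class depends only on the operator in a neighborhood of $\psi^{-1}((-A,A))$. This follows because modifications away from $N$ change $F$ by elements whose classes vanish there; cutting and pasting via finite propagation identifies the class on $X$ with that on the model cylinder.

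Finally, the $K$-theory of the target must be identified. Condition \eqref{eq cond N equiv} gives the isomorphism \eqref{eq iso loc Roe}. It also guarantees that the product distance on $N\times[-\varepsilon,\varepsilon]$ and the restriction of $d_g$ are coarsely equivalent, so the model computation in $C^*(N\times\R)$ really lands in $C^*(N\subset X)$ via $(i_N)_*$. This is exactly the point where the hypothesis is needed. I expect the main obstacle to be the second step together with the locality: controlling $F_s$ modulo $C^*(N\subset X)$ uniformly when $N$ is noncompact. I would handle this with Helffer–Sjöstrand or resolvent formulas, exploiting that $B_s^2\ge \psi^2 - C$ away from $N$, so that $(1+B_s^2)^{-1}$ differences are locally compact and concentrated near $N$.
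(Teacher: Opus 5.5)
Your plan is viable, but it takes a different route from the paper. It is essentially the non-quantitative reduction of \cite{HochsdeKok25}. The paper never proves Theorem~\ref{thm:recallPartitionIndex} directly. It first proves the stronger Theorem~\ref{thm:quantitativeIndex}, in which $\ind_q(D,\psi)$ lives in $K_{n-1}(C^*(\bar B_r(\mathcal N_\psi)))$ for the \emph{intrinsic} path metric $d_r$, and then obtains the statement by pushing forward with Lemma~\ref{lem part 1}.

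Your first and third steps match the paper's homotopy $\psi_{u,v}=u\varphi+v\psi$ and Proposition~\ref{prop:product}. Deforming the metric on $X$ rather than on the cylinder is immaterial. The real difference lies in the locality step and in where \eqref{eq cond N equiv} is used:
\begin{itemize}
\item In Proposition~\ref{prop excision}, $F_b$ is replaced inside the cut-down representatives by a Fourier truncation $G_\delta$ of propagation at most $(A\delta^2)^{-1}$ (Lemma~\ref{lem GbK}). So for $u$ large, the class of $(D,u\varphi)$ is determined by a thin collar inside the uniform tubular neighborhood, whose intrinsic metric is automatically bi-Lipschitz to the product metric. The hypothesis \eqref{eq cond N equiv} enters only at the very end, to recognise $K_*(C^*(N))\to K_*(C^*(N\subset X))$ as the isomorphism \eqref{eq iso loc Roe}.
\item You instead use \eqref{eq cond N equiv} inside the excision step, to identify the Roe algebras of $(U,d_g|_U)$ and $(U,d_{dt^2+g_N})$.
\end{itemize}
For this theorem your use of the hypothesis is legitimate and spares you setting up $\ind_q$. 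As formulated, however, it cannot yield Theorem~\ref{thm:quantitativeIndex}, which is what the band-width application needs (Remark~\ref{rem motivation quant index}). The paper's route pays with explicit estimates but removes the hypothesis from the analysis.

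Two things to tighten.

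First, the locality principle cannot be justified by saying that modifications away from $N$ change $F$ by elements of the ideal. The operators $F$ for $X$ and for $N\times\R$ act on different Hilbert spaces and are non-local. What you can actually compare are cut-down representatives as in \eqref{eq def tilde P} and \eqref{eq def tilde Q}, with $F_b$ replaced by a finite-propagation approximation whose propagation is below the collar width (the paper's $\bar P$, $\bar Q$). Showing that these still represent the index needs $\|[F_b,\sqrt{\chi}]\|$ and $\|(F_b^2-1)(1-\chi)\|$ to be small (Lemma~\ref{lem est Fb}), together with the truncation estimates of Lemma~\ref{lem GbK}; that is, $A\varepsilon$ must be large. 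Your rescaling of $\psi$ supplies this, and this is precisely the content of Proposition~\ref{prop excision}.

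Second, the product computation must be stated in $K_{n-1}(C^*(N\subset N\times\R))\cong K_{n-1}(C^*(N))$, not in $K_{n-1}(C^*(N\times\R))$. In the latter group the image of $K_*(C^*(N))$ is zero, because the inclusion factors through the flasque space $N\times[0,\infty)$, so the equality there would carry no information.
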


If $n$ is odd, then Theorem \ref{thm:recallPartitionIndex} is equivalent to Theorem~2.23 of \cite{HochsdeKok25} (in the case of $\Spin$-Dirac operators), although we use a slightly different representative for the index class.  The latter result is closely related to Theorem 7.5 in \cite{bunke2024coronas} and Corollary 7.6 in \cite{Engel2025relative}, which appeared earlier.

The main technical tool we develop in this paper is a generalization of Theorem \ref{thm:recallPartitionIndex}, namely Theorem~\ref{thm:quantitativeIndex} below. The point of this generalization is to weaken the assumption \eqref{eq cond N equiv} on the different distance functions on $N$. We will see that that condition is too strong to allow us to use Theorem \ref{thm:recallPartitionIndex} to prove Theorem \ref{thm:bandwidthForBand}, whereas Theorem~\ref{thm:quantitativeIndex} does apply. See Remark \ref{rem motivation quant index}.

We will obtain a proof of Theorem \ref{thm:recallPartitionIndex} as a special case of Theorem~\ref{thm:quantitativeIndex}.
For now, we recall the following key proposition, which proves Theorem~\ref{thm:recallPartitionIndex} in the model product case and will also be used in the proof of Theorem~\ref{thm:quantitativeIndex}.

\begin{proposition}\label{prop:product}
	Let $(N^{n-1},g_N)$ be a complete $\Spin$ Riemannian manifold with bounded geometry and consider $X = N\times \mathbb{R}$ equipped with the product metric $g = dt^2 + g_N$.  Consider the partitioning of $X$ by  $N = N\times\{0\}\subset X$, with $X_+ = N \times [0, \infty)$ and $X_- = N \times (-\infty, 0]$.
	Let $D_N$ be the Dirac operator on $N$.  
	Let $\psi$  be a defining function of $N$ that is constant in the factor $N$ in $N \times \R$, and let $\ind(D,\psi)$ denote the index class defined in \eqref{eq part index odd} or \eqref{eq part index even}.  
	Then
	\[
	\ind(D,\psi) = \ind(D_N) \in K_*(C^*(N)).
	\]
\end{proposition}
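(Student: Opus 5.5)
The plan is to separate variables: write the operator $B$ as $D_N$ plus a one-dimensional Callias-type operator in the $\R$-direction, and reduce to the classical index computation on $\R$ via an external (Kasparov) product. Here $\psi=\psi(t)$ depends only on $t$.

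\textbf{Case $n$ even ($\dim N$ odd).} On $S(TX)\cong S(TN)\otimes\C^2$ (with $N$ odd-dimensional) write $D = D_N\otimes\sigma_1 + (-i\partial_t)\otimes\sigma_2$ and $\gamma=1\otimes\sigma_3$, for suitable Pauli matrices. Then
\[
B = D_N\otimes\sigma_1 + 1\otimes B_\R, \qquad B_\R=-i\partial_t\,\sigma_2+\psi(t)\sigma_3 .
\]
These two summands anticommute, so $B^2 = D_N^2\otimes 1 + 1\otimes B_\R^2$. Hence $B$ is the standard external-product operator of $D_N$ (an odd class in $K_1$ of $C^*(N)$) with the Callias operator $B_\R$ on $\R$.

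\textbf{Case $n$ odd.} The argument is analogous: $D_N$ is graded, $D = D_N + c(\partial_t)\partial_t$, and the matrix in \eqref{eq B odd} becomes $D_N\hat\otimes 1 + 1\hat\otimes B_\R$, where $B_\R = \begin{pmatrix}0&-\partial_t - i\psi\\ \partial_t+i\psi&0\end{pmatrix}$ up to conventions.

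\textbf{Key steps.}
\begin{enumerate}
\item Show that, since $\psi$ has compact transition region, $C^*(N\subset X)$ is identified (via \eqref{eq iso loc Roe}) with $C^*(N\times[-R,R])\cong C^*(N)\otimes\mathcal K$, where $\mathcal K$ acts on $L^2([-R,R])$.
\item Show that the boundary map $\partial$ applied to $[F]$ (or $[p]$) factors as the external product
\[
\ind(D_N)\times\ind(B_\R), \qquad \ind(B_\R)\in K_0(\mathcal K)=\Z .
\]
This uses the product formula for boundary maps of Kasparov-type operators $D_1\hat\otimes1+1\hat\otimes D_2$, i.e.\ compatibility of the coarse index with external products, as in \cite[Section~4]{Roe1996indextheory} or \cite{higherindex}.
\item Compute $\ind(B_\R)$. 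The Fredholm operator $\partial_t + \psi$ with $\psi(\pm\infty)=\pm A$ has one-dimensional kernel, spanned by $e^{-\int_0^t\psi}$, and trivial cokernel. So its index is $\pm1$, and it is $+1$ with the chosen sign conventions.
\end{enumerate}
This yields $\ind(D,\psi)=\ind(D_N)$.

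\textbf{Main obstacle.} The hard step is step 2: making the product decomposition rigorous at the level of Roe algebras. One has to identify $F$ modulo $C^*(N\subset X)$ with the product of the bounded transforms of $D_N$ and $B_\R$. I would first deform $F$ through operators in the multiplier algebra which are invertible modulo the ideal. Concretely, replace $B$ by $D_N\otimes1 + 1\otimes s B_\R$ and use functional calculus on the commuting, anticommuting pieces. Then, on the spectral subspace of $B_\R$ orthogonal to its kernel, $B_\R^2\ge c>0$, so that part contributes a trivial class. On the kernel line, $B$ restricts to $D_N$ tensored with a rank-one projection, giving exactly $[\ind(D_N)\otimes e]$ with $e$ a rank-one projection. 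Carrying this out precisely is the Bott-periodicity-type argument from Roe's original proof of the partitioned manifold index theorem.
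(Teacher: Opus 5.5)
Your outline follows the paper's route: the decomposition $B=D_N\hat\otimes 1+1\hat\otimes B_\R$, an exterior-product formula, and the fact that the one-dimensional Callias operator has index one. The difference is in how step 2 is justified.

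The paper stays entirely inside $KK$-theory. It identifies $\ind(D,\psi)=\partial[F]$ with $[F]\in KK_{n-1}(\C,C^*(N\subset X))$, asserts that $F(B)$ satisfies the Connes--Skandalis connection conditions, so that $[F(B)]=[F(D_N)]\otimes_{KK}[F(B_\R)]$, and concludes because $[F(B_\R)]$ generates $KK_0(\C,\mathcal K)\cong\Z$.

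Your splitting along $\ker B_\R$ avoids the Kasparov product, and it works, as follows.
\begin{itemize}
\item The projection $e$ onto $\ker B_\R$ commutes with $B_\R$ and with the grading that $B_\R$ anticommutes with ($\sigma_1$ in your even case), so $1\otimes e$ commutes with $B$.
\item $1\otimes e$ is a multiplier of $C^*(N\subset X)$, because it is a norm limit of finite-propagation operators (truncate $\phi=e^{-\int_0^t\psi}$).
\item On the range of $1\otimes(1-e)$ you have $B^2=D_N^2\hat\otimes 1+1\hat\otimes B_\R^2\geq c>0$. After deforming the normalizing function to be $\pm1$ off $(-\sqrt c,\sqrt c)$, that summand contributes nothing.
\item On the range of $1\otimes e$, $B$ is $\pm D_N$ conjugated by $u\mapsto u\otimes\phi$. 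This is a norm limit of finite-propagation isometries covering $N\hookrightarrow N\times\R$, so it induces the isomorphism \eqref{eq iso loc Roe}.
\end{itemize}
You do not need the deformation $s B_\R$. Your step 1 should use $L^2(\R)$ rather than $L^2([-R,R])$, since $\phi$ is not compactly supported. The deformation step relies on independence of the class from the normalizing function. What your route buys is an explicit, self-contained identification that shows the sign. The paper's route is shorter but hands everything to the $KK$ formalism.

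Two corrections.

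First, your odd-case $B_\R$ has off-diagonal entry $\partial_t+i\psi$, which is not Fredholm. Where $\psi\equiv A$, its symbol $i(\xi+A)$ vanishes at $\xi=-A$. Correspondingly, $B_\R^2$ contains the first-order term $-2i\psi\partial_t$ instead of only $\psi'$. The derivative and the potential must enter with the same phase. The paper's $B_\R$, for instance, has off-diagonal entries $i(\partial_t\pm\psi)$, which is the operator $\partial_t+\psi$ you actually use in step 3.

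Second, with the explicit matrices in your even case, $B_\R=\begin{pmatrix}\psi&-\partial_t\\ \partial_t&-\psi\end{pmatrix}$. Its kernel is spanned by $e^{-\int_0^t\psi}(1,-1)^T$, which lies in the $-1$-eigenspace of $\sigma_1$. So $B$ restricts to $-D_N$ on $L^2(N)\otimes\ker B_\R$, and you would get $-\ind(D_N)$. You must fix the identification $S(TX)|_N\cong S(TN)\otimes\C^2$ (equivalently, the Clifford/orientation convention for $D_N$) to obtain $+\ind(D_N)$.
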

\begin{proof}
	The proof is based on $KK$-theory and follows the arguments of
	Section~2.3.3 in~\cite{XieYu14}.

	We first recall that for any stable $C^*$-algebra $\mathcal A$
	(i.e.\ $\mathcal A\otimes\mathcal K\cong\mathcal A$),
	the $KK$-groups are given by
	\[
	KK_0(\C,\mathcal A)
	=
	\Bigl\{\,[F]:
	F=
	\begin{pmatrix}
		0 & V \\
		U & 0
	\end{pmatrix}\in \mathcal M(\mathcal A),
	\ F=F^*,\ F^2-1\in \mathcal A
	\Bigr\},
	\]
	and
	\[
	KK_1(\C,\mathcal A)
	=
	\{\, [F]: F\in \mathcal M(\mathcal A),~F=F^*,\ F^2-1\in\mathcal A \,\};
	\]
 see~\cite[Section~17.5.4]{Blackadar}.
	Moreover, there is a natural isomorphism
	\begin{equation}\label{eq:KKiso}
		KK_*(\C,\mathcal A)
		\xrightarrow[~\cong~]{}
		K_{*+1}(\mathcal M(\mathcal A)/\mathcal A)
		\xrightarrow[~\cong~]{~\partial~}
		K_*(\mathcal A),
	\end{equation}
	where 
	 $\partial$ is the boundary map; see
	\cite[Propositions~17.5.5 and~17.5.6]{Blackadar}.

	We now take $\mathcal A=C^*(N\subset X)\cong C^*(N)$.
	By construction and by the definition of the boundary map
	$\partial$ (see \cite[Definitions~8.3.1 and~9.3.2]{Blackadar}),
	the index class
	\[
	\ind(D,\psi)\in K_{n-1}(C^*(N\subset X))
	\]
	is precisely the class
	$[F]\in KK_{n-1}(\C,C^*(N))$,
	where $F$ is the operator defined in Section~\ref{sec:3.1},
	for either parity of $n$. (The correspondence between unitary elements and projections just above \cite[Proposition~17.5.6]{Blackadar} is exactly as in \eqref{eq def p}.)

	Recall that the operator $B$ acts on $S(T(N\times\R))^{\oplus 2}$
	when $\dim N$ is even, and on $S(T(N\times\R))$
	when $\dim N$ is odd.
	In either case, the underlying bundle is naturally isomorphic to
	\[
	S(T(N\times\R)\oplus\hat \R)
	=
	S(TN\oplus\R\oplus\hat \R)
	\cong
	S(TN)\,\hat\otimes\, S(\R\oplus\hat \R),
	\]
	where $\hat \R$ denotes a trivial flat one-dimensional bundle
	with standard basis vector $\hat v$.
	Under this identification, the Clifford action $ic(\hat v)$
	corresponds to the matrix
	\(
	\begin{pmatrix}
		0 & -i \\
		i & 0
	\end{pmatrix}
	\)
	when $\dim N$ is even, and to the grading operator $\gamma$
	when $\dim N$ is odd.
	Consequently, the operator $B$ decomposes as the graded tensor sum
	\begin{equation}\label{eq:gradedTensor}
		B = D_N \,\hat\otimes\, 1 + 1 \,\hat\otimes\, B_\R,
	\end{equation}
	where $B_\R$ is the odd differential operator acting on
	$S(\R\oplus\hat\R)$ over $\R$, given by
	\[
	B_\R =
	\begin{pmatrix}
	0 & i\frac{d}{dt}\\  i\frac{d}{dt}&0
	 \end{pmatrix}
	 + \psi\cdot ic(\hat v).
	\]

	Consider the function $F(x)=\frac{x}{\sqrt{x^2+1}}$ and let $F(D_N)$ and $F(B_\R)$ denote the corresponding bounded transforms.
	These operators define classes
	\[
	[F(D_N)]\in KK_{n-1}(\C,C^*(N))
	\quad\text{and}\quad
	[F(B_\R)]\in KK_0(\C,\mathcal K),
	\]
	respectively.
	One standard approach to defining the Kasparov product
	$[F(D_N)]\otimes_{KK}[F(B_\R)]$
	is via connections, following Connes--Skandalis
	\cite{ConnesSkandalis}; see also
	\cite[Chapter~18]{Blackadar}.
	In particular, once a candidate Kasparov module is specified,
	it suffices to verify that it satisfies the conditions listed in
	\cite[Definitions~18.3.1 and~18.4.1]{Blackadar}.
	In the present situation, it is straightforward to check that
	the operator $F(B)$ defines such a module.
	Therefore,
	\[
	[F(B)] = [F(D_N)]\otimes_{KK}[F(B_\R)]
	\quad\text{in } KK_{n-1}(\C,C^*(N)).
	\]
	An alternative approach uses unbounded Kasparov modules;
	see~\cite{MR715325}.

	Finally, we observe that $B_\R$ is a Fredholm operator of index one (see Section 2 of \cite{Higson91}).
	Thus $[F(B_\R)]$ represents a generator of
	\[
	KK_0(\C,\mathcal K)\cong K_0(\mathcal K)\cong\Z.
	\]
	Moreover, for $\mathcal A=C^*(N)$, the image of $[F(D_N)]$
	under the isomorphism~\eqref{eq:KKiso}
	is precisely the index class
	\[
	\ind(D_N)\in K_*(C^*(N)).
	\]
	This completes the proof.
\end{proof}

\section{A quantitative partitioned manifold index}\label{sec quant index}

We develop a refinement of the partitioned manifold index: the \emph{quantitative partitioned manifold index}, see Definitions \ref{def quant index odd} and \ref{def quant index even}. In Section \ref{sec quant index thm}, we prove a quantitative partitioned manifold index theorem for this index.

\subsection{Estimates for bounded transforms}

In this subsection, we recall some estimates from \cite{WangJinmin25} that will be the technical basis of the construction of the quantitative partitioned manifold index in Definitions \ref{def quant index odd} and \ref{def quant index even}.

We begin by fixing some notation. Let $(X^n,g)$ be a complete $\Spin$ Riemannian manifold with bounded geometry, and let $N\subset X$ be a uniformly embedded hypersurface. 
\begin{definition}\label{def:Npsi}
	Given a defining function $\psi$ of the partition $N\subset X$, a \emph{$\psi$-neighborhood} of $N$ is a closed subset
	\[
	\mathcal N_\psi \subset \bar B_R(N)\subset X
	\]
	containing $\psi^{-1}((-A,A))$, such that for every $x\in \mathcal N_\psi$, there exists a geodesic $\gamma$ from $x$ to $N$ inside $\mathcal N_\psi$ with 
	$
	\mathrm{length}(\gamma)=d_g(x,N)\leq R.$
\end{definition}
Fix a $\psi$-neighborhood $\mathcal N_\psi$  of $N$ from now on. 
This always exists; for instance, one may take $\mathcal N_\psi=\bar B_R(N)$. We will also use more general choices, see e.g.\ \eqref{eq choice N psi}.

Let $\psi$ be as in Definition \ref{def:definingFunction}. Let $B$ be the operator \eqref{eq B odd} or \eqref{eq B even}, depending on the parity of $n$. 
Consider the operator
	\beq{eq def Fb}
	F_b=\frac{(bB)^2}{(1+b^2B^2)^{1/2}}.
	\eeq
	
    Choose a smooth  function $\chi\colon X\to [0,1]$ such that:
	\begin{itemize}
		\item $\chi$ is supported in $\bar B_{r}(\mathcal N_\psi)$;
		\item $\psi\equiv\pm A$ on $\operatorname{supp}(1-\chi)$;
		\item $\sqrt{\chi}$ is $2/r$–Lipschitz on $X$.
	\end{itemize}
    Here $r>0$ is to be chosen later.

We recall an estimate for the operator $B$. 
The following lemma is  \cite[Lemma 4.1]{WangJinmin25}, which is a version of \cite[ Lemma 2.11]{WXYZ24}. 
\begin{lemma}\label{lemma:bound}
Let $\rho$ be a smooth function with $0\le\rho\le1$, supported in $\{|\psi|=A\}$. Then for any $\lambda,b\in\R$,
	\[
	\bigl\|(1+\lambda^2+b^2B^2)^{-1/2}\rho\bigr\|\le \frac{1}{\sqrt{1+\lambda^2+b^2A^2}}.
	\]
\end{lemma}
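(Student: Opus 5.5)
The plan is to reduce the norm bound to an operator inequality and then use that $B^2$ is pointwise $D^2+A^2$ on the region where $\psi$ is locally constant. Set $T=1+\lambda^2+b^2B^2$, a positive self-adjoint operator with $T\ge 1+\lambda^2$, and set $c=1+\lambda^2+b^2A^2$. Since $\|T^{-1/2}\rho\|^2=\|\rho T^{-1}\rho\|$, it suffices to prove
\[
\langle \rho v, T^{-1}\rho v\rangle \le c^{-1}\|v\|^2 \qquad\text{for all } v.
\]
I will use the variational formula for a positive invertible operator,
\[
\langle y,T^{-1}y\rangle=\sup_{w}\bigl(2\Real\langle y,w\rangle-\langle Tw,w\rangle\bigr),
\]
with $y=\rho v$. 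The supremum can be taken over $w$ in a core, for instance compactly supported smooth sections, by essential self-adjointness of $B$ on a manifold of bounded geometry.

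The first step is the algebraic identity for $B^2$. In the even case, $D\gamma=-\gamma D$ gives
\[
B^2=D^2+\psi^2+\psi[D,\gamma]\text{-type cross terms}=D^2+\psi^2+c(\nabla\psi)\gamma
\]
up to sign conventions. In the odd case the off-diagonal form \eqref{eq B odd} gives $B^2=\mathrm{diag}(D^2+\psi^2,\,D^2+\psi^2)$ plus a term $\pm i\,c(\nabla\psi)$ on the diagonal. In both cases
\[
\|Bw\|^2=\|Dw\|^2+\|\psi w\|^2+\langle K w,w\rangle ,
\]
where $K$ is a zeroth-order operator built from $c(\nabla\psi)$. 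Because $\psi$ is Lipschitz and $|\psi|=A$ on $\supp\rho$, we have $\nabla\psi=0$ almost everywhere on $\supp\rho$, and $\psi^2=A^2$ there. Hence the operators $K$ and $\psi^2-A^2$ are multiplication operators that are annihilated by $\rho$.

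The second step is a localization. I will use $\supp\rho$ to replace $w$ by a better test section without decreasing the functional, aiming to show that the supremum is attained, up to $\varepsilon$, at $w$ with $(\psi^2-A^2)w=0$ and $Kw=0$. For such $w$ we get
\[
\langle Tw,w\rangle=(1+\lambda^2)\|w\|^2+b^2\|Dw\|^2+b^2A^2\|w\|^2\ge c\|w\|^2.
\]
Then
\[
2\Real\langle\rho v,w\rangle-\langle Tw,w\rangle\le 2\|v\|\|w\|-c\|w\|^2\le c^{-1}\|v\|^2,
\]
which gives the claim.

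The main obstacle is justifying this localization: the maximizer $w=T^{-1}\rho v$ is not supported in $\{|\psi|=A\}$, and the cross term $K$ has no sign. If the localization does not work directly, I would instead compare $T$ with the auxiliary operator
\[
T'=1+\lambda^2+b^2\bigl(\text{the Dirac part}\bigr)^2+b^2A^2\ \ge c.
\]
The difference $T-T'=b^2\bigl((\psi^2-A^2)+K\bigr)$ vanishes on $\supp\rho$. I would then try to prove $\rho T^{-1}\rho\le\rho T'^{-1}\rho$ by a monotonicity argument in the spirit of \cite[Lemma 2.11]{WXYZ24}: write $T^{-1}-T'^{-1}=-T^{-1}(T-T')T'^{-1}$, and exploit that $\rho$ kills $T-T'$ together with positivity of the compression to $\supp\rho$, for instance via a Schur-complement or Dirichlet-type comparison on the region $\{|\psi|=A\}$. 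This comparison step is where the real work lies.
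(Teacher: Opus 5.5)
There is a genuine gap, and it is exactly the step you flag. Neither the localization nor the comparison $\rho T^{-1}\rho\le \rho T'^{-1}\rho$ can be proved, because the inequality as literally stated is false whenever $B$ has an $L^2$ kernel vector that does not vanish on $\supp\rho$. (The paper gives no argument of its own here; it only cites \cite[Lemma 4.1]{WangJinmin25}.)

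Here is a counterexample. Take $X=\R$, $N=\{0\}$ and $D=-i\,d/dx$. Let $\psi$ be odd and increasing with $\psi=\pm A$ for $\pm x\ge 1$, and let $B$ be as in \eqref{eq B odd}. Put $\Psi(x)=\int_0^x\psi$. The unit vector $v_0=(0,e^{-\Psi})/\|e^{-\Psi}\|$ satisfies $Bv_0=0$, because $(D-i\psi)e^{-\Psi}=0$, and it vanishes nowhere. Hence $T^{-1/2}v_0=(1+\lambda^2)^{-1/2}v_0$, and
\[
\bigl\|T^{-1/2}\rho\bigr\|=\bigl\|\rho\, T^{-1/2}\bigr\|\ge (1+\lambda^2)^{-1/2}\|\rho v_0\|.
\]
For every nonzero $\rho$ supported in $\{|x|\ge 1\}$ this is a positive number independent of $b$, whereas $(1+\lambda^2+b^2A^2)^{-1/2}\to 0$ as $b\to\infty$. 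The same happens on $N\times\R$ as in Proposition \ref{prop:product} whenever $\ker D_N\neq 0$, since there $B^2=D_N^2\otimes 1+1\otimes B_\R^2$. In particular, any $T'\ge c$ with $\rho T^{-1}\rho\le\rho T'^{-1}\rho$ would imply the false bound, so your fallback comparison fails too.

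The example also shows why the strategy points the wrong way. Your Step 1 is correct: for $w\in\mathrm{dom}(B)$ supported in $\{|\psi|=A\}$ the cross term vanishes, so $\|Bw\|^2=\|Dw\|^2+A^2\|w\|^2$ and $\langle Tw,w\rangle\ge c\|w\|^2$. But this is a lower bound on the compression of $T$.

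Restricting the supremum in $\langle y,T^{-1}y\rangle=\sup_w\bigl(2\Real\langle y,w\rangle-\langle Tw,w\rangle\bigr)$ to such $w$ only bounds $\langle y,T^{-1}y\rangle$ from below. Likewise, for an orthogonal projection $P$ one has $PT^{-1}P\ge (PTP)^{-1}$ on the range of $P$. So no Schur-complement or Dirichlet-type comparison can turn the lower bound on $\rho T\rho$ into an upper bound on $\rho T^{-1}\rho$. In the example, the optimal test sections are multiples of $v_0$, which are concentrated on the transition region $\{|\psi|<A\}$ that your localization discards.

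What your Step 1 does give, once you also control the commutator $[B,\rho]$, is a bound with an unavoidable gradient term. The commutator $[B,\rho]$ is Clifford multiplication by $\nabla\rho$, of norm at most $\|\nabla\rho\|_\infty$. For $f$ in the Hilbert space put $u=T^{-1/2}f$, so that $\|f\|^2=(1+\lambda^2)\|u\|^2+b^2\|Bu\|^2$. Apply $\langle Tw,w\rangle\ge c\|w\|^2$ to $w=\rho u$, using $Bw=\rho Bu+[B,\rho]u$. This gives
\[
\bigl\|(1+\lambda^2+b^2B^2)^{-1/2}\rho\bigr\|\le\frac{1+|b|\,\|\nabla\rho\|_\infty(1+\lambda^2)^{-1/2}}{\sqrt{1+\lambda^2+b^2A^2}}.
\]
For $\rho=1-\chi$ and $\lambda=0$, this replaces $\frac{1}{bA}$ in \eqref{eq1.2} by $\frac{1}{bA}+\frac{4}{rA}$. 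That is still at most $5\delta$ under the later choices $b\ge (A\delta)^{-1}$ and $r\ge\sqrt{b/(A\delta)}$. However, the counterexample shows the gradient term cannot be removed. The statement should therefore be corrected along these lines rather than proved as written.
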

%
%

	Using Lemma \ref{lemma:bound}, one obtains the following estimates, see  \cite[Lemma 4.2]{WangJinmin25}.
\begin{lemma}	\label{lem est Fb}
We have
\begin{align}
\label{eq1.1} &\|[F_b,\sqrt{\chi}]\| \leq 12\frac{\sqrt b}{r\sqrt A};\\
\label{eq1.2} &\|(F_b^2-1)(1-\chi)\|  \leq \frac{1}{bA} .
	\end{align}
\end{lemma}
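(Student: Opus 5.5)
The proof is direct functional calculus: write $F_b$ as an integral of resolvents and estimate each piece with Lemma \ref{lemma:bound}. I read \eqref{eq def Fb} as the bounded transform $F_b = bB(1+b^2B^2)^{-1/2}$. This matches the name ``bounded transform'' and \eqref{eq B odd}, and it gives
\[
F_b^2-1 = -(1+b^2B^2)^{-1}.
\]

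\emph{Estimate \eqref{eq1.2}.} Put $\rho := 1-\chi$. It is smooth, takes values in $[0,1]$, and by the choice of $\chi$ it is supported in $\{|\psi| = A\}$. Splitting the resolvent in two and applying Lemma \ref{lemma:bound} with $\lambda = 0$ gives
\[
\|(F_b^2-1)(1-\chi)\| \le \|(1+b^2B^2)^{-1/2}\|\,\|(1+b^2B^2)^{-1/2}\rho\| \le \frac{1}{\sqrt{1+b^2A^2}} \le \frac{1}{bA}.
\]

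\emph{Estimate \eqref{eq1.1}.} I would use the resolvent formula
\[
F_b = \frac{2}{\pi}\int_0^\infty bB\,R_\lambda\, d\lambda, \qquad R_\lambda = (1+\lambda^2+b^2B^2)^{-1}.
\]
Set $f = \sqrt{\chi}$. The potential term of $B$ is $\psi$ times a Clifford-type matrix, so it commutes with $f$, and hence $[B,f]$ is Clifford multiplication by $\nabla f$. This gives $\|[B,f]\| \le 2/r$. The key geometric observation is where $\nabla f$ lives: it vanishes wherever $\chi \equiv 1$, so it is supported in $\operatorname{supp}(1-\chi)$, where $|\psi| = A$. I would therefore pick a smooth cutoff $\rho$ with $0 \le \rho \le 1$, supported in $\{|\psi| = A\}$ and equal to $1$ on $\operatorname{supp}\nabla f$. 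Then
\[
[B,f] = \rho[B,f] = [B,f]\rho.
\]
Expanding the commutator gives
\[
[bBR_\lambda, f] = b[B,f]R_\lambda - bB R_\lambda\, b^2\bigl([B,f]B + B[B,f]\bigr) R_\lambda .
\]
In each term I insert $\rho$ next to $[B,f]$ and then use three bounds: $\|bBR_\lambda^{1/2}\| \le 1$, $\|R_\lambda^{1/2}\| \le (1+\lambda^2)^{-1/2}$, and, from Lemma \ref{lemma:bound}, $\|R_\lambda^{1/2}\rho\|,\ \|\rho R_\lambda^{1/2}\| \le (1+\lambda^2+b^2A^2)^{-1/2}$. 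Each term should then be bounded by a constant times
\[
\frac{b}{r}\,(1+\lambda^2)^{-1/2}(1+\lambda^2+b^2A^2)^{-1/2}.
\]
Integrating in $\lambda$ gives roughly $\frac{b}{r}\cdot\frac{\log(2+bA)}{bA}$. Since $\log x \lesssim \sqrt{x}$, this is at most $C\sqrt{b}/(r\sqrt{A})$.

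The main obstacle is this last step: arranging the factors so that every term places a resolvent carrying $\rho$ next to $[B,f]$, and tracking the constants so they land at $12$. If the logarithmic route gives a worse constant, I would instead split the integral at $\lambda = \sqrt{bA}$. On $[0,\sqrt{bA}]$ I would use the decay $(bA)^{-1}$ coming from Lemma \ref{lemma:bound}; on $[\sqrt{bA},\infty)$ I would use the decay $\lambda^{-2}$ coming from $R_\lambda$. Both pieces then contribute $O\bigl(\sqrt{b}/(r\sqrt{A})\bigr)$ with explicit constants.
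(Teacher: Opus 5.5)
Granting Lemma \ref{lemma:bound} as the paper does, your argument is correct and matches the derivation from that lemma which the paper cites as \cite[Lemma~4.2]{WangJinmin25}. For \eqref{eq1.1}, each of your three commutator terms is bounded by $\frac{2b}{r}(1+\lambda^2)^{-1/2}(1+\lambda^2+b^2A^2)^{-1/2}\le \frac{2b}{r}\min\{(bA)^{-1},\lambda^{-2}\}$, and splitting the $\lambda$-integral at $\sqrt{bA}$ gives at most $4\sqrt b/(r\sqrt A)$ per term, which is where $12=3\cdot 4$ comes from (the prefactor $2/\pi$ is spare). So use that split rather than the logarithmic bound, which as written is not uniform as $bA\to0$; and if $\supp\nabla\sqrt\chi$ touches the boundary of $\{|\psi|=A\}$, so that your smooth cutoff need not exist, take $\rho=1_{\{\chi<1\}}$ instead, since Lemma \ref{lemma:bound} passes to this indicator as an increasing strong limit of smooth cutoffs $h_k\circ\chi$.
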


\subsection{Construction of the index in odd dimensions}

Suppose  that the dimension $n$ of $X$ is odd.

Let $\mathcal N_\psi$ be as in Definition~\ref{def:definingFunction}. For any $r> 0$, consider the metric $d_r$ on $\bar B_r(\mathcal N_\psi)$ defined by
\[
d_r(x,y)\coloneqq \inf\left\{\int_\gamma |\dot\gamma(t)|_g\,dt:\ \gamma \text{ is a smooth path in } \bar B_r(\mathcal N_\psi) \text{ connecting } x \text{ and } y\right\}.
\]
The metric $d_r$ restricts to a metric on $N$, which we again denote by $d_r$. 
Clearly $d_r\le d_{g_N}$, so the identity map
\[
\id\colon (N,d_{g_N})\longrightarrow (N,d_r)
\]
is a coarse map.

Write
	\[
	F_b=\frac{(bB)^2}{(1+b^2B^2)^{1/2}}
	=\begin{pmatrix}
		0&V_b\\ U_b&0
	\end{pmatrix},
	\]
	where
	\[
	\begin{cases}
		U_b=b(D+i\psi)\,(1+b^2(D^2+\psi^2))^{-1/2},\\[0.3em]
		V_b=b(D-i\psi)\,(1+b^2(D^2+\psi^2))^{-1/2}.
	\end{cases}
	\]
Now define
\beq{eq def tilde P}
\tilde P=\sqrt{\chi} P_b\sqrt{\chi}+\begin{pmatrix}
    1&0\\0&0
\end{pmatrix}(1-\chi),
\eeq
where
\beq{eq def Pb}
P_b=\begin{pmatrix}
    1-(1-U_bV_b)^2&(2-U_bV_b)U_b(1-V_bU_b)\\V_b(1-U_bV_b)&(1-V_bU_b)^2
\end{pmatrix}.
\eeq
\begin{proposition}\label{prop quasi proj}
We have $\tilde P\in M_2(C^*(\bar B_{r}(\mathcal N_\psi))^+)$, and $\tilde P-\begin{pmatrix}
    1&0\\0&0
\end{pmatrix}\in M_2(C^*(\bar B_{r}(\mathcal N_\psi))$.
By choosing 
$$b \geq\frac{1}{A\delta} \quad \text{and} \quad r\geq \sqrt{\frac{b}{A\delta}},$$
for some universal small constant $\delta>0$, we have
\[
\|\tilde P\|\leq 3 \quad \textup{and} \quad \|\tilde P^2-\tilde P\|<\frac{1}{1000}.
\]
\end{proposition}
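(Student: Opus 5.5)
The plan is to write $\tilde P = e + \sqrt{\chi}\,Q\,\sqrt{\chi}$, where $e=\begin{pmatrix}1&0\\0&0\end{pmatrix}$ and $Q = P_b - e$. This identity holds because $e\chi + e(1-\chi) = e$. All three claims then reduce to statements about $Q$ and its interaction with $\sqrt\chi$. I read $F_b$ as the bounded transform $bB(1+b^2B^2)^{-1/2}$, which is consistent with the formulas for $U_b,V_b$; \eqref{eq def Fb} as printed has $(bB)^2$ in the numerator, which I take to be a typo. In particular $\|U_b\|,\|V_b\|\le 1$ and $F_b$ is self-adjoint.

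\emph{Membership.} Every entry of $Q$ contains a factor $1-U_bV_b$ or $1-V_bU_b$, i.e.\ an entry of $1-F_b^2$. Since $B$ is a first-order elliptic operator with bounded-geometry coefficients ($\psi$ bounded and Lipschitz), $1-F_b^2$ is locally compact. It is also a norm limit of finite-propagation operators, by the usual Fourier-transform/wave-equation argument for functions of $B$ \cite{higherindex, Roe1996indextheory}. Hence $Q\in M_2(C^*(X))$.

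Sandwiching by $\sqrt\chi$ makes $\sqrt\chi Q\sqrt\chi$ supported in $\bar B_r(\mathcal N_\psi)$. The delicate point is that propagation must now be measured with the intrinsic metric $d_r$ rather than $d_g$. To handle this, I will approximate $Q$ by operators built from $e^{itB}$ with $|t|\le s$, and use finite propagation speed for the wave equation. Support propagates along paths of length $\le s$, so after cutting with $\sqrt\chi$, the wave operator between points of $\operatorname{supp}\chi$ only involves paths that can be taken inside a slightly enlarged neighbourhood. Concretely, I would compare with the same operator on a complete manifold containing $\bar B_{r}(\mathcal N_\psi)$ on which $B$ agrees locally, and argue via the unit-speed domain of dependence.

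I expect this step to be the main obstacle. Paths realizing $d_g$ may leave $\bar B_r(\mathcal N_\psi)$, so one must argue with the local (domain-of-dependence) form of finite propagation speed and not just with $d_g$-propagation. Granting it, $\tilde P - e\in M_2(C^*(\bar B_r(\mathcal N_\psi)))$, and hence $\tilde P\in M_2(C^*(\bar B_r(\mathcal N_\psi))^+)$.

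\emph{Norm bound.} Since $\|U_b\|,\|V_b\|\le1$, each entry of $P_b$ is bounded by a small constant, so $\|\sqrt\chi Q\sqrt\chi\|\le 2$ and $\|\tilde P\|\le 3$. This is crude bookkeeping, possibly with slightly finer entrywise estimates to get the constant exactly.

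\emph{Quasi-idempotence.} $P_b$ is exactly idempotent, being the standard Milnor-type idempotent built from any pair $U,V$, so $Q^2 + eQ + Qe = Q$. Expanding,
\[
\tilde P^2-\tilde P = \sqrt\chi Q\chi Q\sqrt\chi + e\sqrt\chi Q\sqrt\chi + \sqrt\chi Q\sqrt\chi e - \sqrt\chi Q\sqrt\chi .
\]
Commuting $\sqrt\chi$ past $e$ (they commute) and past $Q$ yields $\sqrt\chi\,(Q^2+eQ+Qe-Q)\,\sqrt\chi=0$, plus two kinds of error.

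First, there are commutator terms $[\sqrt\chi,Q]$. Since $Q$ is a polynomial of bounded degree in $U_b,V_b$, i.e.\ in entries of $F_b$, these are bounded by a constant times $\|[F_b,\sqrt\chi]\|\le 12\sqrt b/(r\sqrt A)$ by \eqref{eq1.1}. The choice $r\ge\sqrt{b/(A\delta)}$ makes this at most $12\sqrt\delta$.

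Second, there are terms of the form $\sqrt\chi Q(1-\chi)Q\sqrt\chi$, coming from $\chi$ in place of $1$ between the two $Q$'s. Each such term contains a factor $(1-F_b^2)(1-\chi)$, which I move next to $(1-\chi)$ using the commutator estimate again. This factor has norm $\le 1/(bA)\le\delta$ by \eqref{eq1.2} and $b\ge 1/(A\delta)$.

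Altogether $\|\tilde P^2-\tilde P\|\le C\sqrt\delta$ for a universal $C$. Choosing $\delta$ small then gives $<1/1000$.
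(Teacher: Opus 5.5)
There is a genuine gap in the membership claim $\tilde P-e\in M_2(C^*(\bar B_r(\mathcal N_\psi)))$, and the remedy you sketch does not close it.

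Up to $Q\in M_2(C^*(X))$ you agree with the paper, whose proof does nothing more. It notes that the entries of $P_b-e$ are $C_0$-functions of $bB$, hence lie in $C^*(X)$. It then treats $\sqrt\chi Q\sqrt\chi$ as an element of $C^*(\bar B_r(\mathcal N_\psi))$ because it is supported there (the convention after Lemma \ref{lem funct map incl}). You are right that this is not automatic for the intrinsic metric $d_r$, and the paper's proof of this proposition does not address it.

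However, finite propagation speed only controls $d_g$. For $|t|\le s$, the operator $\sqrt\chi\,e^{itbB}\sqrt\chi$ links only points $x,y\in\bar B_r(\mathcal N_\psi)$ with $d_g(x,y)\le bs$, and a minimizing geodesic between them stays in $\bar B_{r+bs/2}(\mathcal N_\psi)$. So you get finite propagation for the intrinsic metric of the \emph{enlarged} set $\bar B_{r+bs/2}(\mathcal N_\psi)$, not for $d_r$. The functions of $bB$ occurring in $Q$, such as $(1+b^2B^2)^{-1}$, are not band-limited, so norm approximation forces $s\to\infty$. Comparing with another manifold on which $B$ agrees locally only helps up to the same Fourier tails. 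No domain-of-dependence statement confines waves to $\bar B_r(\mathcal N_\psi)$.

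For example, in Example \ref{ex quant index thm} with $a\ll 0$, consider the points $(a,q)$ and $(a,S(q))$ of the slab. They are at $d_g$-distance bounded uniformly in $q$ (through the tubes), but at $d_r$-distance tending to infinity. The kernel of $(1+b^2B^2)^{-1}$ between them is small but has no reason to vanish. So ``granting it'' grants something this method cannot prove.

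What your idea does give is a truncated version. Cutting the Fourier integral at $|\xi|\le K$, as in Lemma \ref{lem GbK}, costs $\frac{2}{\pi}e^{-K}$ in norm and gives $d_g$-propagation at most $bK$. The resulting quasi-idempotent then genuinely lies in the Roe algebra of $\bar B_{r+O(bK)}(\mathcal N_\psi)$ with its intrinsic metric. This is how Proposition \ref{prop excision} proceeds, and it is what a rigorous version of the first sentence looks like.

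Your norm estimates are correct, but they take a different route from the paper. The paper shows $\|\tilde P-P_b\|<1/6000$ from $\tilde P-P_b=\sqrt\chi[P_b,\sqrt\chi]+(1-\chi)(e-P_b)$, using both \eqref{eq1.1} and \eqref{eq1.2}. It then gets $\|\tilde P\|\le 3$ from $\|P_b\|\le 2$, and quasi-idempotence from $P_b^2=P_b$.

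Your direct expansion is actually sharper than you make it. Since $e$ commutes with $\chi$ and $Q^2+eQ+Qe=Q$, one has exactly
\[
\tilde P^2-\tilde P=-\sqrt\chi\,Q(1-\chi)Q\sqrt\chi ,
\]
and every entry of $Q$ ends in $1-U_bV_b$ or $1-V_bU_b$. So no commutator terms arise, and \eqref{eq1.2} alone gives $\|\tilde P^2-\tilde P\|\le C\delta$, with no condition on $r$. What the paper's route buys is the closeness of $\tilde P$ to $P_b$. That closeness does need the lower bound on $r$, and it is reused in Lemma \ref{lem part 1}.

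For $\|Q\|\le 2$, the crude entrywise bound only gives about $2.6$. Instead use $V_b=U_b^*$ and $0\le U_bV_b,V_bU_b\le 1$. Then the diagonal part of $Q$ has norm at most $1$, and the off-diagonal entries have norm less than $1$.
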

\begin{proof}
We first note that
\begin{equation}\label{eq:1000-P}
    \tilde P=\begin{pmatrix}
    1&0\\0&0
\end{pmatrix}+\sqrt{\chi}\begin{pmatrix}
    -(1-U_bV_b)^2&(2-U_bV_b)U_b(1-V_bU_b)\\V_b(1-U_bV_b)&(1-V_bU_b)^2
\end{pmatrix}\sqrt\chi.
\end{equation}
Each entry of 
	\(
	P_b - 
	\begin{pmatrix}1&0\\0&0\end{pmatrix}
	\)
equals a matrix component of $\varphi(B_b)$, for some $\varphi \in C_0(X)$, which lies in $C^*(X)$ by \cite[Proposition 3.6]{Roe1996indextheory} or  \cite[Proposition 10.5.6]{Higson00}. Thus the first sentence of the proposition follows.

	We have
\beq{eq P tilde Pb}
	\tilde P
=
		\sqrt{\chi}[P_b,\sqrt{\chi}] + P_b 
		+ (1-\chi)\left(
		\begin{pmatrix}1&0\\0&0\end{pmatrix}-P_b
		\right).
\eeq	
The first term on the right  is smaller than or equal to a universal constant times $12\frac{\sqrt b}{r\sqrt A}\leq 12\sqrt{\delta}$ by \eqref{eq1.1}.	
By \eqref{eq1.2}, the last term in \eqref{eq P tilde Pb} is smaller than or equal to a universal constant times $1/(bA)=\delta$. So we can choose $\delta$ small enough such that $\|\tilde P - P_b\|< 1/6000$. 

Using the fact that $F_b$ and $F_b^2 - 1$ have operator norms at most $1$, one finds that for all $b>0$, 
$
\|P_b\| \leq 2.
$
So, for any choice of $b$ and $r$ as above, we have $\|\tilde P\| \leq 3$. Furthermore, using the fact that $P_b^2 = P_b$ to write
\[
\tilde P^2 - \tilde P = (\tilde P - P_b)\tilde P + P_b(\tilde P - P_b) + (P_b - \tilde P),
\]
we see that
\[
\|\tilde P^2 - \tilde P\| \leq \|\tilde P - P_b \|(\|\tilde P\| + \|P_b\|+ 1) <\frac{1}{1000}.
\]
Thus we obtain the desired bounds.
\end{proof}

By Proposition \ref{prop quasi proj}, the spectrum of $\tilde P$ excludes the line $\{z:\Real(z)=1/2\}$. So it lies inside the domain of the holomorphic function $\Theta$ on $\{z:\Real(z)\ne 1/2\}$ defined by	\[
	\Theta(z)=
	\begin{cases}
		0,& \Real(z)<1/2,\\
		1,& \Real(z)>1/2.
	\end{cases}
	\]
The operator $\Theta(\tilde P)$ defined by functional calculus is a genuine idempotent that is close to $\tilde P$. It lies in $M_2(C^*(\bar B_{r}(\mathcal N_\psi))^+)$ because this algebra, like every $C^*$-algebra, is closed under holomorphic functional calculus. 
\begin{lemma}\label{lemma:Theta-P}
Let $\varepsilon  \in (0,1/4)$.
If $P$ is a bounded operator on a Hilbert space such that $\|P^2-P\|<\varepsilon$, then
\[
 \|\Theta(P)-P\|< \frac{8\varepsilon}{1-4\varepsilon}(\|P\|+ 2\varepsilon).
\]
\end{lemma}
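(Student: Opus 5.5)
The plan is to write $\Theta(P)$ through a sign function and factor $P^2-P$ out of $\Theta(P)-P$. Set $Q=2P-1$ and $T=Q^2-1=4(P^2-P)$, so that $\|T\|<4\varepsilon<1$.

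\textbf{Step 1 (square root).} Since $\|T\|<1$, the spectrum of $Q^2=1+T$ lies in the open disc of radius $4\varepsilon$ about $1$. On that disc the principal branch $z\mapsto z^{-1/2}$ is holomorphic, so we can define
\[
(Q^2)^{-1/2}=(1+T)^{-1/2}=\sum_{k\ge0}c_kT^k,\qquad c_k=\binom{-1/2}{k},
\]
as a norm-convergent series, with $|c_k|\le 1/2$ for $k\ge1$.

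\textbf{Step 2 (identify $\Theta(P)$).} Put $S=Q(Q^2)^{-1/2}$. Then $S$ commutes with $Q$ and $S^2=1$. By the spectral mapping theorem, $\sigma(P)$ avoids $\{\Real z=1/2\}$, so $\sigma(Q)$ avoids the imaginary axis. The function $z\mapsto z(z^2)^{-1/2}$ is holomorphic on a neighbourhood of $\sigma(Q)$, and on that set it equals $\operatorname{sign}(\Real z)$: indeed $z^2$ stays near $1$, so $(z^2)^{-1/2}=\pm1/z$ with the sign fixed by continuity. Holomorphic functional calculus therefore gives
\[
\Theta(P)=\tfrac12(1+S).
\]

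\textbf{Step 3 (estimate).} From Step 2,
\[
\Theta(P)-P=\tfrac12 Q\bigl((1+T)^{-1/2}-1\bigr)=\tfrac12 Q\,T\,g(T),\qquad g(T)=\sum_{k\ge0}c_{k+1}T^k,
\]
and $\|g(T)\|\le \tfrac12\sum_{k\ge0}(4\varepsilon)^k=\frac{1}{2(1-4\varepsilon)}$. Using $\|T\|\le 4\|P^2-P\|<4\varepsilon$ and $\|Q\|\le 2\|P\|+1$, this yields
\[
\|\Theta(P)-P\|<\frac{\varepsilon\,(2\|P\|+1)}{1-4\varepsilon}.
\]
Comparing with the stated bound, $\varepsilon(2\|P\|+1)\le 8\varepsilon(\|P\|+2\varepsilon)$ holds exactly when $6\|P\|+16\varepsilon\ge1$. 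So the stated estimate follows whenever $\|P\|\ge 1/6$.

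\textbf{Main obstacle.} The hard step is the regime where $\|P\|$ is small, and I expect the statement as literally worded to fail there. Take $P=\varepsilon\cdot 1$ with $0<\varepsilon<1/4$. Then $\|P^2-P\|=\varepsilon-\varepsilon^2<\varepsilon$ and $\Theta(P)=0$, so
\[
\|\Theta(P)-P\|=\varepsilon,\qquad\text{whereas}\qquad \frac{8\varepsilon}{1-4\varepsilon}(\|P\|+2\varepsilon)=\frac{24\varepsilon^2}{1-4\varepsilon},
\]
which is smaller than $\varepsilon$ for $\varepsilon<1/28$.

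What I would actually prove is therefore the bound $\varepsilon(2\|P\|+1)/(1-4\varepsilon)$ from Step 3, valid for all $P$, which implies the stated one under the extra hypothesis $\|P\|\ge 1/6$. That hypothesis holds in the intended application: there $P=\tilde P$ is $1/6000$-close to the genuine projection $P_b$, which is nonzero because its $(1,1)$ entry equals $1$ far from $N$, so $\|\tilde P\|\ge 1-1/6000$.
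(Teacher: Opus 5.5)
Your counterexample is correct, so the lemma is false as literally stated. For $P=\varepsilon\cdot 1$ with $\varepsilon<1/28$ we have $\|P^2-P\|=\varepsilon-\varepsilon^2<\varepsilon$ and $\Theta(P)=0$. Hence $\|\Theta(P)-P\|=\varepsilon$, which exceeds $\frac{24\varepsilon^2}{1-4\varepsilon}$.

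The paper's proof goes wrong in its last sentence, ``the other term can be estimated similarly.'' It uses the factorization $(\xi-P)^{-1}=\bigl((1-\xi)-P\bigr)\bigl(\xi-\xi^2-(P-P^2)\bigr)^{-1}$.
\begin{itemize}
\item On the circle $|\xi-1|=2\varepsilon$, the first factor has norm at most $\|P\|+2\varepsilon$.
\item On the circle $|\xi|=2\varepsilon$, the first factor is $(1-\xi)-P$, whose norm is only bounded by $1+2\varepsilon+\|P\|$. In your example, $|(\xi-\varepsilon)^{-1}|=1/\varepsilon$ at $\xi=2\varepsilon$, far above the claimed bound $3/(1-4\varepsilon)$.
\end{itemize}
Carried out correctly, the contour argument gives
\[
\|\Theta(P)-P\|<\frac{4\varepsilon}{1-4\varepsilon}\bigl(2\|P\|+1+4\varepsilon\bigr).
\]

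Your route is genuinely different from the paper's, and it checks out. You write $\Theta(P)=\frac12\bigl(1+Q(Q^2)^{-1/2}\bigr)$ with $Q=2P-1$. This is justified: $z(z^2)^{-1/2}$ equals $\pm1$ on the two lobes of $\{|z^2-1|<1\}$, and these lobes contain $\sigma(Q)$. Expanding $(1+T)^{-1/2}$ as a binomial series then factors $P^2-P$ out of $\Theta(P)-P$ exactly, with no resolvent estimates. The result, $\varepsilon(2\|P\|+1)/(1-4\varepsilon)$, is roughly four times sharper than the repaired contour bound. The contour method is the more standard and flexible tool: it works for other spectral cuts and gives resolvent bounds reusable for perturbation estimates. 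Your approach is sharper and purely algebraic.

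In the paper's applications, $\|P\|\le 3$ and $\|P^2-P\|<1/1000$ or smaller, so either corrected bound suffices. You therefore do not actually need the extra hypothesis $\|P\|\ge 1/6$. If you want it anyway, the clean justification is this: $P_b$ is an idempotent differing from $\mathrm{diag}(1,0)$ by an element of $M_2(C^*(X))$, so it is nonzero and hence has norm at least $1$. Your stated reason is not accurate, since the $(1,1)$ entry of $P_b$ is only close to $1$ away from $N$, not equal to it.
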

\begin{proof}
If $\lambda \in \Spec(P)$, then 
$
|\lambda| \cdot |\lambda - 1| = |\lambda^2-\lambda|< \varepsilon. 
$
Hence 
$|\lambda|<2\varepsilon$ (if $\Real(\lambda) < 1/2$), or
$|\lambda-1|<2\varepsilon$ (if $\Real(\lambda) > 1/2$). So
\[
P = \frac{1}{2\pi i}\int_{|\xi|=2\varepsilon}\xi(\xi-P)^{-1}d\xi + \frac{1}{2\pi i}\int_{|\xi-1|=2\varepsilon}\xi(\xi-P)^{-1}d\xi,
\]
and 
    $$\Theta(P)=\frac{1}{2\pi i}\int_{|\xi-1|=2\varepsilon}(\xi-P)^{-1}d\xi.$$
    Therefore,
   \beq{eq expr Theta P} 
    \Theta(P)-P=\frac{1}{2\pi i}\int_{|\xi-1|=2\varepsilon}(1-\xi)(\xi-P)^{-1}d\xi-\frac{1}{2\pi i}\int_{|\xi|=2\varepsilon}\xi(\xi-P)^{-1}d\xi.
    \eeq
    Hence it suffices to estimate $\|(\xi-P)^{-1}\|$. 
    
    For any $\xi\in\C$ with $|\xi-1|=2\varepsilon$, set $\eta=1-\xi$ with $|\eta|=2\varepsilon$ and
    $$w=\xi-\xi^2=\eta-\eta^2$$
    with $2\varepsilon(1-2\varepsilon)\leq |w|\leq 2\varepsilon(1+2\varepsilon)$. And
    $$(\xi-P)^{-1}=(\eta-P)(w-(P-P^2))^{-1}.$$
We estimate
\[
\begin{split}
   \| (w-(P-P^2))^{-1}\| &=\left\| w^{-1}\sum_{k=0}^\infty\left(w^{-1}(P-P^2)\right)^k\right\|\\
   &\leq  |w|^{-1}\sum_{k=0}^\infty \|w^{-1}(P-P^2)\|^k\\
   &=\frac{1}{|w|-\|P-P^2\|}\\
   &< \frac{1}{\varepsilon(1-4\varepsilon)}.
\end{split}
\]
So
\[
\|(\xi-P)^{-1}\|< \frac{\|P\|+2\varepsilon}{\varepsilon(1-4\varepsilon)}.
\]
    We obtain
\[
        \left\|\frac{1}{2\pi i}\int_{|\xi-1|=2\varepsilon}(1-\xi)(\xi-P)^{-1}d\xi\right\|
        \leq  \frac{1}{2\pi}\cdot 4\pi \varepsilon \cdot 2\varepsilon\cdot \frac{\|P\|+2\varepsilon}{\varepsilon(1-4\varepsilon)}=
        \frac{4\varepsilon}{1-4\varepsilon}(\|P\|+2\varepsilon).
\]
The norm of the other term on the right hand side of \eqref{eq expr Theta P} can be estimated similarly.
\end{proof}

By Definition~\ref{def:Npsi}, for any point $x\in \bar B_r(\mathcal N_\psi)$, there exists a smooth path lying in $\bar B_r(\mathcal N_\psi)$ connecting $x$ to $N$ with length at most $r+R$. Thus $N\subset \bar B_R(N)$ is a net. Consequently, the inclusion map
\beq{eq N cN equiv}
i_N\colon (N,d_r)\longrightarrow (\bar B_r(\mathcal N_\psi),d_r)
\eeq
is a coarse equivalence, which induces an isomorphism on $K$-theory of Roe algebras by Theorem \ref{thm:coarseMap}.
\begin{definition}[Quantitative partitioned manifold index for odd $n$]\label{def quant index odd}
Let $\delta,r, b>0$ be as in Proposition \ref{prop quasi proj}. 
The \emph{quantitative partitioned manifold index} of $D$ with respect to $\psi$ is 
	\beq{eq q ind odd}
	\ind_q(D,\psi)
	=
	[\Theta(\tilde P)]
	-
	\left[
	\begin{pmatrix}
		1&0\\0&0
	\end{pmatrix}
	\right]
	\in 
	K_0(C^*(\bar B_{r}(\mathcal N_\psi)))		\cong K_{0}(C^*(N,d_{r})) ,
		\eeq
		where $C^*(N,d_{r})$ denotes the Roe algebra of $(N,d_{r})$, and we have used the coarse equivalence \eqref{eq N cN equiv}. 
\end{definition}

\subsection{Construction of the  index in even dimensions}

Now assume that the dimension $n$ of $X$ is even.

\begin{lemma}\label{lem exp series}
There is a sequence $(a_k)_{k=0}^{\infty
}$ in $\C$ such that $a_k z^k$ is bounded in $k$ for all $z \in \C$, and such that for every bounded, self-adjoint operator $T$ on a Hilbert space,
\beq{eq exp series}
\exp(2\pi i T)-1 = (T^2-T)\sum_{k=0}^{\infty} a_k T^k.
\eeq
\end{lemma}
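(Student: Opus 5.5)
We will prove the lemma by working first with the entire function $f(z)=\exp(2\pi i z)-1$ and then passing to operators through the holomorphic (power series) functional calculus.

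The first step is to factor out $z^2-z$. The zeros of $f$ are exactly the integers, and each is simple, so in particular $0$ and $1$ are simple zeros. Hence
\[
g(z):=\frac{\exp(2\pi i z)-1}{z^2-z}
\]
has removable singularities at $0$ and $1$ and extends to an entire function. We then take $(a_k)$ to be the Taylor coefficients of $g$ at $0$, so that $g(z)=\sum_{k\ge0}a_k z^k$.

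The second step is the growth condition. Because $g$ is entire, its Taylor series has infinite radius of convergence, so $\limsup_k |a_k|^{1/k}=0$. Consequently, for every $z\in\C$ the series $\sum_k |a_k||z|^k$ converges, and in particular $a_kz^k\to0$, which gives boundedness in $k$. To make this explicit, Cauchy's estimates on a circle of radius $\rho$ give $|a_k|\le \rho^{-k}\max_{|w|=\rho}|g(w)|$. Choosing $\rho=2|z|+1$ then bounds $|a_k z^k|$ by a constant that depends only on $z$.

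The third step is the operator identity. Let $T$ be bounded and self-adjoint. The estimate above shows that $\sum_k a_kT^k$ converges absolutely in operator norm, since $\sum_k|a_k|\|T\|^k<\infty$. Its limit is $g(T)$ in the holomorphic functional calculus, which here coincides with the continuous functional calculus on $\Spec(T)\subset\R$. The identity $f(z)=(z^2-z)g(z)$ holds between entire functions, and the holomorphic functional calculus is a homomorphism. Therefore $\exp(2\pi iT)-1=(T^2-T)g(T)$, which is exactly \eqref{eq exp series}. Equivalently, one can multiply out the two norm-convergent power series directly and compare coefficients with those of $f$.

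No step is really an obstacle. The only point that needs a little care is checking that the zeros of $f$ at $0$ and $1$ are simple, so that dividing by $z^2-z$ leaves an entire function and does not produce a pole. This holds because $f'(z)=2\pi i\exp(2\pi i z)$ never vanishes. Self-adjointness of $T$ is not actually needed for the argument; it holds for any bounded $T$.
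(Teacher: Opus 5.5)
Your proof is correct and follows the same route as the paper: divide the entire function $e^{2\pi i z}-1$ by $z(z-1)$, take the Taylor coefficients of the resulting entire function, and pass to operators via the norm-convergent power series, with you supplying the Cauchy-estimate and functional-calculus details that the paper leaves implicit. One small correction to your closing remark: simplicity of the zeros of $e^{2\pi i z}-1$ is not what matters, since $z(z-1)$ has only simple zeros and the quotient is entire as soon as $e^{2\pi i z}-1$ vanishes at $0$ and $1$ (to any order).
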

\begin{proof}
The function $z \mapsto e^{2\pi i z} - 1$ is entire, with zeroes at the integers. Hence $z\mapsto \frac{e^{2\pi i z}-1}{z(z-1)}$ is an entire function, and is therefore given by a power series that converges on all of $\C$. The coefficients $a_n$ in this power series have the desired property.
\end{proof}

Let $\delta, b,r>0$.
Let 
$\chi$ be as above Lemma \ref{lemma:bound}, and
\[
 B = D + \psi \gamma, 
\qquad 
F_b = \frac{bB}{(1+(bB)^2)^{1/2}}.
\]
as in \eqref{eq B even}.
Define
$$p_b=\frac{F_b+1}{2},\quad \textup{and} \quad Q_b=\exp\left(2\pi i p_b\right)\in C^*(N\subset X)^+$$
as in Section \ref{sec:3.1}. 
%
%
Consider the operator
\beq{eq def tilde Q}
\tilde Q=\sqrt{\chi} Q_b\sqrt{\chi}+(1-\chi)=1+\sqrt{\chi}(Q_b-1)\sqrt\chi.
\eeq
Because $Q_b-1 = \varphi(bB)$, for a function $\varphi \in C_0(\R)$, we have $Q_b -1\in C^*(X)$ by  \cite[Proposition 3.6]{Roe1996indextheory}, and hence $\tilde Q \in C^*(\bar B_{r}(\mathcal N_\psi))^+$.
\begin{proposition}\label{prop tilde Q Qb}
For a universal $\delta$ small enough, $b \geq\frac{1}{A\delta}$ and  $r\geq \sqrt{\frac{b}{A\delta}}$, we have 
$$\|\tilde Q-Q_b\|\leq \frac{1}{100}.$$
\end{proposition}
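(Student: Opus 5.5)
The plan is to decompose $\tilde Q - Q_b$ in the same way as in the proof of Proposition \ref{prop quasi proj}, namely
\[
\tilde Q - Q_b = \sqrt{\chi}\,[Q_b-1,\sqrt{\chi}] + (1-\chi)(1-Q_b),
\]
which follows from $\tilde Q = 1+\sqrt\chi(Q_b-1)\sqrt\chi$ and $\sqrt\chi\sqrt\chi(Q_b-1) = \chi(Q_b-1)$. The two terms will be estimated separately, each by a universal constant times either $\sqrt{\delta}$ or $\delta$. The universal $\delta$ is then chosen so that the sum is at most $1/100$.

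\textbf{The commutator term.} Write $Q_b - 1 = \exp(2\pi i p_b) - 1$. By Lemma \ref{lem exp series},
\[
Q_b - 1 = (p_b^2-p_b)\sum_k a_k p_b^k = h(F_b)
\]
for an entire function $h$. I would rather use the power series of $z\mapsto e^{\pi i (z+1)}$ directly in the variable $F_b$, since $\|F_b\|\le 1$. Then
\[
Q_b = -\exp(\pi i F_b) = -\sum_k \frac{(\pi i)^k}{k!} F_b^k.
\]
The Leibniz rule gives $\|[F_b^k,\sqrt\chi]\|\le k\,\|[F_b,\sqrt\chi]\|$, so
\[
\|[Q_b,\sqrt\chi]\| \le \Bigl(\sum_k \frac{k\pi^k}{k!}\Bigr)\|[F_b,\sqrt\chi]\| = \pi e^{\pi}\,\|[F_b,\sqrt\chi]\|.
\]
Here I use the even-dimensional version of \eqref{eq1.1} for $F_b = bB(1+b^2B^2)^{-1/2}$, which I take Lemma \ref{lem est Fb} to provide; this is the estimate from \cite{WangJinmin25}, valid in either parity. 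That gives a bound $12\pi e^\pi \sqrt{b}/(r\sqrt A) \le 12\pi e^\pi\sqrt\delta$ under $r\ge\sqrt{b/(A\delta)}$.

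\textbf{The cutoff term.} We have $(1-\chi)(1-Q_b) = -(1-\chi)h(F_b)$, where $h(z) = e^{\pi i(z+1)}-1$ vanishes at $z=\pm1$. So
\[
h(z) = (z^2-1)\,g(z)
\]
with $g$ entire. Since $F_b$ is self-adjoint with spectrum in $[-1,1]$, we have $\|g(F_b)\|\le \sup_{[-1,1]}|g| =: c$, a universal constant. Because $g(F_b)$ commutes with $F_b^2-1$,
\[
\|(1-\chi)(F_b^2-1)g(F_b)\| = \|(1-\chi)g(F_b)(F_b^2-1)\|,
\]
and I would rather write $h(F_b)(1-\chi)$ using adjoints, so that $\|(1-\chi)h(F_b)\| = \|h(F_b)^*(1-\chi)\|$. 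Here $h(F_b)^* = \bar h(F_b)$ has the same factorization $\bar h(z) = (z^2-1)\bar g(z)$, so
\[
\|(1-\chi)h(F_b)\| \le c\,\|(F_b^2-1)(1-\chi)\|\le c/(bA)\le c\,\delta
\]
by \eqref{eq1.2}.

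Combining the two estimates gives
\[
\|\tilde Q - Q_b\| \le 12\pi e^\pi\sqrt\delta + c\,\delta,
\]
which is at most $1/100$ for $\delta$ small enough.

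The main obstacle I expect is justifying that Lemma \ref{lem est Fb} applies to the even-dimensional $F_b$. Note that \eqref{eq def Fb} as printed has $(bB)^2$ in the numerator, evidently a typo for $bB$. If the cited estimates are only stated for the odd operator, I would redo them: Lemma \ref{lemma:bound} holds verbatim for $B = D+\psi\gamma$, since $B^2 = D^2+\psi^2 + c(d\psi)\gamma$-type terms with $|d\psi|$ bounded. The commutator bound then follows from the integral formula
\[
(1+b^2B^2)^{-1/2} = \frac{2}{\pi}\int_0^\infty (1+\lambda^2+b^2B^2)^{-1}\,d\lambda
\]
together with $\|[B,\sqrt\chi]\|\le 2/r$.
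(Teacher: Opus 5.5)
Your proof is correct and follows the paper's argument. You use the same splitting $\tilde Q-Q_b=\sqrt\chi[Q_b,\sqrt\chi]+(1-\chi)(1-Q_b)$, a power-series and Leibniz bound that reduces the commutator term to \eqref{eq1.1}, and a factorization of $Q_b-1$ through $F_b^2-1$ that reduces the cutoff term to \eqref{eq1.2}. Writing $Q_b=-\exp(\pi i F_b)$ and expanding in $F_b$ (with $\|F_b\|\le 1$) gives a cleaner constant $\pi e^{\pi}$ than the paper's expansion in $F_b+1$, which really picks up factors $\|F_b+1\|^{j-1}$. Lemma \ref{lem est Fb} is already stated for $B$ of either parity, so your fallback re-derivation is unnecessary. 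That is just as well, because its claim that Lemma \ref{lemma:bound} holds verbatim would need more than boundedness of $|d\psi|$.
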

\begin{proof}
We have
\beq{eq tilde Q Qb}
\tilde Q-Q_b = \sqrt{\chi}[Q_b, \sqrt{\chi}]+ (1-\chi)(1-Q_b). 
\eeq
And
\[
\begin{split}
[Q_b, \sqrt{\chi}]&= \sum_{j=0}^{\infty}\frac{(2\pi i)^j}{j! 2^j}[(F_b+1)^j, \sqrt{\chi}].
\end{split}
\]
The norm of the right hand side is less than or equal to
\[
\left(
\sum_{j=0}^{\infty}\frac{\pi^j}{(j-1)!}  \right)   \|F_b+1 \| \cdot  \| [F_b, \sqrt{\chi}] \|.
\]
By \eqref{eq1.1}, this can be made arbitrarily small for small enough $\delta$.

For the second term on the right hand side of \eqref{eq tilde Q Qb}, we use the numbers $a_k$ from Lemma \ref{lem exp series} to write
\beq{eq Qb - 1}
Q_b-1 = (p_b^2-p_b)\sum_{k=0}^\infty a_k p_b^k=\frac 1 4(F_b^2-1)\sum_{k=0}^\infty a_k \left(\frac{F_b+1}{2}\right)^k.
\eeq
The norm of the series on the right hand side is bounded uniformly in $b$, since $\|F_b\|\leq 1$. So by \eqref{eq1.2}, we see that the norm of the second term on the right hand side of \eqref{eq tilde Q Qb} can be made arbitrarily small, if we choose $\delta$ small enough.
\end{proof}
The operator $Q_b$ is unitary, so
in the setting of Proposition \ref{prop tilde Q Qb}, the operator $\tilde Q$ is invertible in $C^*(\bar B_{r}(\mathcal N_\psi))^+$.
\begin{definition}[Quantitative partitioned manifold index for even $n$]\label{def quant index even}
Let $\delta, b, r>0$ be as in Proposition \ref{prop tilde Q Qb}. 
The \emph{quantitative partitioned manifold index} of $D$ with respect to $\psi$ is 
	\beq{eq q ind even}
	\ind_q(D,\psi)
	=
	[\tilde Q]\in
	K_1(C^*(\bar B_{r}(\mathcal N_\psi)))		\cong K_{1}(C^*(N,d_{r})).
		\eeq
\end{definition}

\begin{remark}\label{rem O A-1}
In Propositions \ref{prop quasi proj} and  \ref{prop tilde Q Qb}, and hence in Definitions \ref{def quant index odd} and \ref{def quant index even}, a  natural choice is $b=r=\frac{1}{A\delta}$. Then one obtains
\[
\ind_q(D, \psi) \in K_{n-1}(C^*(\bar B_{(A\delta)^{-1}}  (\mathcal{N}_{\psi}))).
\]
In Lemma \ref{lem part 2}, we make a different choice.
We discuss the dependence of the index on $b$ and $r$ in Lemma \ref{lem ind indep choices} below.
\end{remark}

\subsection{Basic properties}

We discuss some basic properties of the quantitative partitioned manifold index of Definitions \ref{def quant index odd} and \ref{def quant index even}. These are: independence of choices made (Lemma \ref{lem ind indep choices}), a relation with the partitioned manifold index from Subsection \ref{sec:3.1} (Lemma \ref{lem part 1}), and a vanishing result (Lemma \ref{lem part 2}).

\begin{lemma}\label{lem ind indep choices}
\begin{itemize}
\item[(a)] Given $\delta$ and $r$ as in Proposition \ref{prop quasi proj} or  \ref{prop tilde Q Qb}, the index $\ind_q(D,\psi)$ does not depend on $b$ as in  Proposition \ref{prop quasi proj} or  \ref{prop tilde Q Qb}.
\item[(b)] Given $\delta$, $b$ and $r$ as in Proposition \ref{prop quasi proj} or  \ref{prop tilde Q Qb}, the  index $\ind_q(D,\psi)$  does not depend on the function $\chi$ as above Lemma \ref{lemma:bound}.
\item[(c)]
 Let $\delta$ and $b$ be as in Proposition \ref{prop quasi proj} or  \ref{prop tilde Q Qb}, and let $r_1\geq r_0\geq \sqrt{\frac{b}{A\delta}}$. 
 Denote the corresponding quantitative indices by
\[
	\ind_q^{r_j}(D,\psi)
	\in 
	K_{n-1}(C^*(\bar B_{r_j}(\mathcal N_\psi))).
\]
With respect to the inclusion map $i_{\bar B_{r_0}(\mathcal N_\psi)}\colon \bar B_{r_0}(\mathcal N_\psi) \to \bar B_{r_1}(\mathcal N_\psi)$, we have
\beq{eq q ind dep r}
(i_{\bar B_{r_0}(\mathcal N_\psi)})_*(\ind_q^{r_0}(D,\psi)) = \ind_q^{r_1}(D,\psi).
\eeq
\end{itemize}
\end{lemma}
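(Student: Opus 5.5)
All three parts will follow from homotopy arguments. The guiding principle is that a norm-continuous path of quasi-idempotents with uniformly small defect $\|\tilde P^2-\tilde P\|<1/1000$ (or of invertibles) inside a fixed matrix algebra over $C^*(\bar B_{r}(\mathcal N_\psi))^+$ gives a norm-continuous path of idempotents $\Theta(\tilde P)$ (resp.\ invertibles), hence a constant $K$-theory class. Holomorphic functional calculus is norm continuous on the set of operators whose spectrum avoids $\{\Real z=1/2\}$, and Lemma \ref{lemma:Theta-P} gives uniform control. So in each case the task is to exhibit a suitable path and check that the estimates of Propositions \ref{prop quasi proj} and \ref{prop tilde Q Qb} hold uniformly along it.

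For (a), fix $\delta$ and $r$. Take $b_0,b_1$ with $b_j\ge 1/(A\delta)$ and $r\ge\sqrt{b_j/(A\delta)}$. The admissible set of $b$ is an interval, so we can interpolate $b_t=(1-t)b_0+tb_1$, which stays admissible. The map $b\mapsto \varphi(bB)$ is norm continuous for $\varphi\in C_0(\R)$, and more generally for $\varphi$ continuous with limits at $\pm\infty$, such as $x\mapsto x/\sqrt{1+x^2}$. This follows from uniform continuity of $(b,x)\mapsto\varphi(bx)$ on $[b_0,b_1]\times\R$ away from $b=0$, which holds because $b\ge 1/(A\delta)>0$. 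Hence $U_{b_t},V_{b_t},P_{b_t},Q_{b_t}$, and therefore $\tilde P_t,\tilde Q_t$, are norm continuous in $t$.

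The proofs of Propositions \ref{prop quasi proj} and \ref{prop tilde Q Qb} only use the bounds \eqref{eq1.1} and \eqref{eq1.2}, which hold for every admissible $b_t$. So $\|\tilde P_t^2-\tilde P_t\|<1/1000$ and $\|\tilde P_t\|\le 3$ for all $t$, and likewise $\tilde Q_t$ is invertible for all $t$. Therefore $[\Theta(\tilde P_t)]$ and $[\tilde Q_t]$ are constant.

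For (b), take two admissible functions $\chi_0,\chi_1$ and set $\chi_t=(1-t)\chi_0+t\chi_1$. This is supported in $\bar B_r(\mathcal N_\psi)$, and $\psi\equiv\pm A$ on $\supp(1-\chi_t)\subset\supp(1-\chi_0)\cup\supp(1-\chi_1)$. The main obstacle is the Lipschitz condition, since $\sqrt{\chi_t}$ need not be $2/r$-Lipschitz. I would handle it as follows. First I would check that the commutator estimate \eqref{eq1.1} really depends only on a bound $\|[bB,\sqrt{\chi}]\|\le b\,\mathrm{Lip}(\sqrt\chi)$, so a Lipschitz constant $c/r$ only changes the universal constant $12$, which $\delta$ absorbs. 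Second, I would bound $\mathrm{Lip}(\sqrt{\chi_t})$ directly. By concavity of the square root, $\sqrt{\chi_t}$ is the norm of the vector $(\sqrt{1-t}\sqrt{\chi_0},\sqrt t\sqrt{\chi_1})$, so its Lipschitz constant is at most $\sqrt{(1-t)(2/r)^2+t(2/r)^2}=2/r$. Thus the path is admissible, continuity in $t$ is clear, and the argument of (a) applies. The same path argument handles the even case with $\tilde Q$.

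For (c), first choose $\chi_0$ admissible for $r_0$. Since $\bar B_{r_0}\subset \bar B_{r_1}$ and $2/r_0$-Lipschitz is weaker than $2/r_1$-Lipschitz, $\chi_0$ need not be admissible for $r_1$ as stated. However, the estimate \eqref{eq1.1} with $r_0$ already suffices for the quasi-idempotent bounds, since $r_0\ge\sqrt{b/(A\delta)}$. So $\tilde P$ built from $\chi_0$ is a valid quasi-idempotent in $M_2(C^*(\bar B_{r_1}(\mathcal N_\psi))^+)$. Its class there is $(i_{\bar B_{r_0}(\mathcal N_\psi)})_*\ind_q^{r_0}(D,\psi)$, by Lemma \ref{lem funct map incl} and because $\Theta$ commutes with the inclusion $\iota$. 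Then I would connect $\chi_0$ to an admissible $\chi_1$ for $r_1$ via the convex path of (b), using the constant $2/r_0$ throughout, to conclude \eqref{eq q ind dep r}.
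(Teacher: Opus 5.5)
Your proof is correct and follows the paper's route. Each part is a homotopy of quasi-idempotents (or invertibles) inside a fixed algebra. For (c), you and the paper both build the class from a cutoff admissible for $r_0$, view it in $C^*(\bar B_{r_1}(\mathcal N_\psi))$, and connect it to a cutoff admissible for $r_1$ along a path whose square roots are uniformly $2/r_0$-Lipschitz.

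The only real difference is the interpolation. The paper interpolates the square roots, $\chi_t=((1-t)\sqrt{\chi_0}+t\sqrt{\chi_1})^2$, so the Lipschitz bound is immediate. You interpolate $\chi$ linearly and get the same $2/r$ bound from the identity $\sqrt{\chi_t}=\|(\sqrt{1-t}\sqrt{\chi_0},\sqrt{t}\sqrt{\chi_1})\|$. That step is a norm identity plus the reverse triangle inequality, not concavity. Your choice has the small bonus that $\chi_t$ stays smooth.
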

\begin{proof}
Part (a) follows by a simple homotopy.

For part (b), suppose that $\chi_0$ and $\chi_1$ are  functions as above  Lemma \ref{lemma:bound}.
For $t \in [0,1]$, 
consider the function
\beq{eq def chi t}
\chi_t :=( (1-t)\sqrt{\chi_0} + t \chi_1)^2.
\eeq
Then $\chi_t$ has the properties above  Lemma \ref{lemma:bound} for all $t \in [0,1]$.
Indeed,  the first and third properties holds for all $t$ by direct verifications. The second property holds, because
\[
\supp(1-\chi_t) \subset \supp(1-\chi_0) \cup \supp(1-\chi_1)
\]
for all $t$. 

The quantitative indices defined with respect to the functions $\chi_t$ define a homotopy between the the quantitative indices defined with respect to $\chi_0$ and $\chi_1$, so (b) follows.

For part (c),  let $\chi_j$  be a function as above  Lemma \ref{lemma:bound}, for $j =0,1$ and $r = r_j$. The function $\chi_t$ in \eqref{eq def chi t} satisfies the first condition above Lemma \ref{lemma:bound} with $r = r_1$, the second condition (which does not involve $r$), and the third condition for $r = r_0$. 

First, suppose that $n$ is odd.
For $t \in [0,1]$, let $\tilde P_{t}$ be the projection \eqref{eq def tilde P}, defined with $\chi =\chi_t$. The proof of Proposition \ref{prop quasi proj} shows that the conclusions of this result hold for $\tilde P_t$, for all $t \in [0,1]$, for $r = r_1$. So we have corresponding classes
\[
	[\Theta(\tilde P_t)]
	-
	\left[
	\begin{pmatrix}
		1&0\\0&0
	\end{pmatrix}
	\right]
	\in 
	K_0(C^*(\bar B_{r_1}(\mathcal N_\psi))).	
\]
(These are not quantitative indices as defined in Definition \ref{def quant index odd}, since the function $\chi_t$ does not necessarily satisfy the third condition above Lemma \ref{lemma:bound} for $r = r_1$.) These classes define a homotopy between the two sides of \eqref{eq q ind dep r}.

Similarly, if $n$ is even, then we define $\tilde Q_t$ as the operator \eqref{eq def tilde Q},  with $\chi = \chi_t$.  By the properties of $\chi_t$ mentioned above, 
the proof of Proposition \ref{prop tilde Q Qb} shows that the conclusions of Proposition \ref{prop tilde Q Qb}  hold for $\tilde Q_t$, with $r = r_1$. Hence these operators define a homotopy between the two sides of \eqref{eq q ind dep r}.
\end{proof}
%
%
%

\begin{remark}
The parameter $r$ in Definitions \ref{def quant index odd} and \ref{def quant index even} is not included in the notation $\ind_q$, but it plays an important role in the quantitative partitioned manifold index theorem, Theorem \ref{thm:quantitativeIndex} and its applications. By part (c) of Lemma \ref{lem ind indep choices}, the dependence of the quantitative index on $r$ is natural (the index for a given $r$ \emph{represents} the index for $r'>r$ in the sense below Lemma \ref{lem funct map incl}).
\end{remark}

Recall from Subsection \ref{sec Roe alg} that the image of the map from $C^*(\bar B_{r}(\mathcal N_\psi))$ to $C^*(X)$ induced by the inclusion $i_{\bar B_{r}(\mathcal N_\psi)}:\bar B_{r}(\mathcal N_\psi)\to X$ lies in $C^*(N \subset X)$.
\begin{lemma} \label{lem part 1}
The index from Definitions \ref{def quant index odd} and  \ref{def quant index even}  satisfies
		\[
		(i_{\bar B_{r}(\mathcal N_\psi)})_*(\ind_q(D,\psi))=\ind(D,\psi).
		\]
\end{lemma}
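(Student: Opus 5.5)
We compare both classes inside $K_{n-1}(C^*(N\subset X))$. By Lemma \ref{lem funct map incl}, pushing forward along $i_{\bar B_{r}(\mathcal N_\psi)}$ amounts to viewing $\Theta(\tilde P)$ (for odd $n$), respectively $\tilde Q$ (for even $n$), as an element of $M_2(C^*(N\subset X)^+)$, respectively $C^*(N\subset X)^+$. This works because both operators are supported in $\bar B_r(\mathcal N_\psi)\subset \bar B_{R+r}(N)$.

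The plan has three steps.

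\textbf{Step 1: pass from $b=1$ to general $b$.} In \eqref{eq part index odd} and \eqref{eq part index even}, $\ind(D,\psi)$ is defined with $b=1$. For any $b>0$, the operator $bB$ is the operator \eqref{eq B odd} or \eqref{eq B even} built from $bD$ and the defining function $b\psi$. The family $s\mapsto F_s$, $s\in[1,b]$, is norm continuous in $\mathcal M(C^*(N\subset X))$, and $F_s^2-1\in C^*(N\subset X)$ for each $s$ by \eqref{eq F2-1 odd}. Hence the boundary classes $\partial[F_s]$ are constant in $s$. Concretely, the genuine projections $P_s$, respectively unitaries $Q_s=\exp(2\pi i p_s)$, vary continuously in $M_2(C^*(N\subset X)^+)$. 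So $\ind(D,\psi)=[P_b]-\left[\begin{pmatrix}1&0\\0&0\end{pmatrix}\right]$ for odd $n$ and $\ind(D,\psi)=[Q_b]$ for even $n$, with $b$ as in Proposition \ref{prop quasi proj} or \ref{prop tilde Q Qb}.

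\textbf{Step 2: odd case.} Take the straight-line path $T_t=(1-t)P_b+t\tilde P$, $t\in[0,1]$. By \eqref{eq:1000-P} and the first part of Proposition \ref{prop quasi proj}, it lies in $\begin{pmatrix}1&0\\0&0\end{pmatrix}+M_2(C^*(N\subset X))$. The proof of Proposition \ref{prop quasi proj} gives $\|\tilde P-P_b\|<1/6000$ and $\|P_b\|\le 2$. Using $P_b^2=P_b$ as in that proof, we get $\|T_t^2-T_t\|\le \|T_t-P_b\|(\|T_t\|+\|P_b\|+1)<1/1000$ uniformly in $t$. Hence the spectra of the $T_t$ avoid $\{\Real z=1/2\}$, and $\Theta(T_t)$ is a norm-continuous path of idempotents in $M_2(C^*(N\subset X)^+)$ whose difference with $\begin{pmatrix}1&0\\0&0\end{pmatrix}$ lies in $M_2(C^*(N\subset X))$. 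Its endpoints are $\Theta(P_b)=P_b$ and $\Theta(\tilde P)$. Since idempotents define the same $K_0$-classes as the associated projections, the two classes agree.

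\textbf{Step 3: even case.} By Proposition \ref{prop tilde Q Qb}, $\|\tilde Q-Q_b\|\le 1/100$ and $Q_b$ is unitary. Therefore $(1-t)Q_b+t\tilde Q$ is invertible for all $t\in[0,1]$ and lies in $1+C^*(N\subset X)$, so $[\tilde Q]=[Q_b]$ in $K_1(C^*(N\subset X))$.

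The main obstacle is Step 1, which has to be made rigorous. Norm continuity of $s\mapsto F_s$ follows from functional calculus, since $x\mapsto sx/\sqrt{1+s^2x^2}$ depends continuously on $s$ uniformly on compact sets and is uniformly bounded. However, uniform convergence on all of $\R$ holds only away from $x=0$. I would therefore argue as follows. The matrix entries of $P_s-\begin{pmatrix}1&0\\0&0\end{pmatrix}$, respectively $Q_s-1$, are $\varphi_s(B)$ for functions $\varphi_s\in C_0(\R)$ depending continuously on $s$ in sup norm, because $\varphi_s(x)=\varphi_1(sx)$ with $\varphi_1\in C_0$ and $s$ ranges over a compact interval. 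This gives norm continuity in $C^*(X)$. It also gives continuity in $C^*(N\subset X)$: each such operator is a norm limit of finite-propagation operators supported near $N$, since $B^2$ is bounded below by a positive constant away from $\psi^{-1}((-A,A))$, by Lemma \ref{lemma:bound}.
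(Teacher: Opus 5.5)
Your argument is essentially the paper's proof. The paper likewise compares $\tilde P$ with $P_b$ via $\|\tilde P-P_b\|<1/1000$ and Lemma \ref{lemma:Theta-P} (your straight-line path $T_t$ does the same job), and compares $\tilde Q$ with $Q_b$ via Proposition \ref{prop tilde Q Qb}, inside $C^*(N\subset X)$. It then identifies the class of $P_b$ (resp.\ $Q_b$) with $\ind(D,\psi)$ without comment, even though $\ind(D,\psi)$ was defined with $b=1$; your Step 1 makes that passage explicit.

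Two small corrections to Step 1:
\begin{enumerate}
\item For $s\in[1,b]$ the functions $x\mapsto sx/\sqrt{1+s^2x^2}$ do vary continuously in sup norm on all of $\R$, including near $x=0$. Their $s$-derivative is $x(1+s^2x^2)^{-3/2}$, which is uniformly bounded, so $s\mapsto F_s$ is norm continuous directly.
\item The membership $\varphi_s(B)\in C^*(N\subset X)$ cannot be derived from Lemma \ref{lemma:bound}. A lower bound on $B^2$ away from $N$ only localizes $\varphi(B)$ for $\varphi$ vanishing on $\{|x|\ge A\}$, and $(1+x^2)^{-1}$ does not vanish there. That membership has to be imported from \eqref{eq F2-1 odd} (applied to $sD$ and $s\psi$), exactly as the paper does.
\end{enumerate}
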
 
\begin{proof}
We first assume that $n$ is odd.
As in the proof of Proposition~\ref{prop quasi proj}, for the chosen parameters $b$ and $r$ we have
\[
\|\tilde P-P_b\|<\frac{1}{1000}
\quad\text{in } M_2(C^*(N\subset X)^+).
\]
Lemma \ref{lemma:Theta-P} therefore implies that $\|\Theta(\tilde P) - \Theta(P_p)\| \leq \frac{3}{100}$, so the  idempotents $\Theta(\tilde P)$ and $\Theta(P_p)$ are homotopic. Hence 
\begin{multline*} [\Theta(\tilde P)]- \left[ \begin{pmatrix} 1&0\\0&0 \end{pmatrix} \right] = [\Theta(P_b)]- \left[ \begin{pmatrix} 1&0\\0&0 \end{pmatrix} \right]\\ = [P_b]- \left[ \begin{pmatrix} 1&0\\0&0 \end{pmatrix} \right] = \ind(D,\psi) \in K_0(C^*(N\subset X)). \end{multline*}
This proves the first point of Theorem~\ref{thm:quantitativeIndex} when $n$ is odd. 

The case where $n$ is even follows similarly from the estimate
\[
\|\tilde Q-Q_b\|<\frac{1}{100},
\]
established in Proposition \ref{prop tilde Q Qb}.
\end{proof}

\begin{lemma} \label{lem part 2}
There are constants $a, c_1>0$ such that, if the operators $D\pm i\psi$ are invertible on $X$, with
		\beq{eq D psi pos}
		(D\pm i\psi)^*(D\pm i\psi)\ge L^2
		\eeq
		for some $L>0$, and $A\geq a L$, then
		\[
		\ind_q(D,\psi)=0\in
		K_{n-1}\!\left(C^*\bigl(\bar B_{c_1(AL)^{-1/2}}(\mathcal N_\psi)\bigr)\right)
		\cong K_{n-1}(C^*(N,d_{c_1(AL)^{-1/2}})).
		\]		
\end{lemma}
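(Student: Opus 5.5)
Under the invertibility hypothesis \eqref{eq D psi pos}, the idea is to exploit the spectral gap of $B$: the idempotent $P_b$ (odd case) or the unitary $Q_b$ (even case) becomes trivial up to small error already at moderate $b$. That lets us take $r$ of size $(AL)^{-1/2}$ instead of $(A\delta)^{-1}$. The first step is to note that in the odd case $B^2=\begin{pmatrix}(D-i\psi)^*(D-i\psi)&0\\0&(D+i\psi)^*(D+i\psi)\end{pmatrix}\geq L^2$, and in the even case $B^2=D^2+\psi^2+[D,\psi\gamma]$-type terms, which are again controlled by \eqref{eq D psi pos}. Hence $|B|\geq L$. Then $F_b$ is a function of $bB$ with $\|F_b^2-1\|=\sup_{|x|\ge bL}(1+x^2)^{-1}\le (bL)^{-2}$. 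Consequently $\|1-U_bV_b\|,\|1-V_bU_b\|\le (bL)^{-2}$, so $\|P_b-\begin{pmatrix}1&0\\0&0\end{pmatrix}\|=O((bL)^{-1})$. In the even case, \eqref{eq Qb - 1} gives $\|Q_b-1\|=O((bL)^{-2})$.

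Next I would choose $b=c/L$ with $c$ a large universal constant, so that these errors are below, say, $1/10^4$. The hypothesis $A\ge aL$ with $a$ large then gives $b\ge 1/(A\delta)$, since $c/L\ge 1/(A\delta)$ exactly when $A\ge L/(c\delta)$. So $b$ is an admissible choice in Propositions \ref{prop quasi proj} and \ref{prop tilde Q Qb}. The lower bound on $r$ becomes $\sqrt{b/(A\delta)}=\sqrt{c/\delta}\,(AL)^{-1/2}$, and we set $c_1:=\sqrt{c/\delta}$ and $r=c_1(AL)^{-1/2}$. By part (a) of Lemma \ref{lem ind indep choices}, the index in $K_{n-1}(C^*(\bar B_r(\mathcal N_\psi)))$ computed with this $b$ is the quantitative index.

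Finally I would build a straight-line homotopy inside $M_2(C^*(\bar B_r(\mathcal N_\psi))^+)$, namely $\tilde P_s=\begin{pmatrix}1&0\\0&0\end{pmatrix}+s\sqrt\chi\,(P_b-\begin{pmatrix}1&0\\0&0\end{pmatrix})\sqrt\chi$ for $s\in[0,1]$. Each $\tilde P_s$ satisfies $\|\tilde P_s-\begin{pmatrix}1&0\\0&0\end{pmatrix}\|=O((bL)^{-1})$, so $\|\tilde P_s^2-\tilde P_s\|$ is small and $\Theta(\tilde P_s)$ gives a continuous path of idempotents by Lemma \ref{lemma:Theta-P}. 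At $s=0$ this is the trivial class, so $\ind_q(D,\psi)=0$. In the even case the analogous path is $\tilde Q_s=1+s\sqrt\chi(Q_b-1)\sqrt\chi$, which stays within distance $1/2$ of $1$ and is therefore invertible throughout, giving $[\tilde Q]=[1]=0$. Note that this route does not even need the commutator estimate \eqref{eq1.1} beyond what is required for the index to be defined.

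The main obstacle I expect is the even case: checking that $B^2\ge L^2$, or some comparable gap, really follows from \eqref{eq D psi pos} for $B=D+\psi\gamma$. Here $B^2=D^2+\psi^2+\gamma[D,\psi]$-type cross terms, up to the sign conventions, and the hypothesis on $D\pm i\psi$ has to be translated through the grading. If this translation fails, my fallback is to use the spectral gap only to bound $F_b^2-1$, which is all the argument needs. I would also have to keep careful track of the universal constants, so that $a$ and $c_1$ are independent of $L$ and $A$.
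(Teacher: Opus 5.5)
Your proposal is correct and is essentially the paper's proof. The paper takes $b_1=(\delta_1L)^{-1}$ (your $c=\delta_1^{-1}$, giving the same $a=\delta_1/\delta$) and $r_1=\sqrt{b_1/(A\delta_1)}=\delta_1^{-1}(AL)^{-1/2}$, which is slightly larger than your minimal admissible $r$, and then concludes from $\|\tilde P_1-\left(\begin{smallmatrix}1&0\\0&0\end{smallmatrix}\right)\|<1/100$, resp.\ $\|\tilde Q_1-1\|<1/100$, just as you do.

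The even-case gap you flag is only asserted in the paper, and your fallback cannot bypass it, since $F_b^2-1=-(1+b^2B^2)^{-1}$ already requires it. It does hold. We have $B^2=D^2+\psi^2+[D,\psi]\gamma$ and $(D+i\psi)^*(D+i\psi)=D^2+\psi^2+i[D,\psi]$. Conjugating by the unitary that is $1$ on the $(+1)$-eigenspace of $\gamma$ and $-i$ on the $(-1)$-eigenspace fixes the even operator $D^2+\psi^2$ and sends the odd operator $i[D,\psi]$ to $[D,\psi]\gamma$. Hence $B^2$ is unitarily equivalent to $(D+i\psi)^*(D+i\psi)\geq L^2$.
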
 
\begin{proof}
Let $\delta$ be as in Proposition \ref{prop quasi proj} or  \ref{prop tilde Q Qb}, depending on the parity of $n$. 
Assume that~\eqref{eq D psi pos} holds.
Then the spectrum of $B$ is contained in $(-\infty,-L]\cup[L,\infty)$.
Choose $b_1=\frac{1}{\delta_1 L}$, 
with $\delta_1 \in (0, \delta)$ sufficiently small, as specified below, and let $F_{b_1}$ be as in \eqref{eq def Fb}. Then
\beq{eq F2 - 1 invtble}
\|F_{b_1}^2-1\|
=
\left\|
\frac{1}{1+b_1^2B^2}
\right\|
\le
\frac{1}{1+b_1^2L^2}
<
\delta_1^2.
\eeq

Next, set $r_1=\sqrt{\frac{b_1}{A\delta_1}}$, and choose a cutoff function $\chi_1$ as above Lemma \ref{lemma:bound}, for $r=r_1$.
By Lemma~\ref{lem est Fb}, we obtain
\[
\|[F_{b_1},\sqrt{\chi_1}]\|
\le 
12\frac{\sqrt {b_1}}{r_1\sqrt A}
=
12\sqrt{\delta_1}.
\]

If $n$ is odd, then by~\eqref{eq:1000-P} and \eqref{eq F2 - 1 invtble}, the operator $\tilde P_1$ associated to these data as in \eqref{eq def tilde P} satisfies
\[
\left\|
\tilde P_1-
\begin{pmatrix}
	1&0\\
	0&0
\end{pmatrix}
\right\|
<
\frac{1}{100},
\]
provided $\delta_1$ is chosen sufficiently small. So
\beq{eq class P1}
	[\Theta(\tilde P_1)]
	-
	\left[
	\begin{pmatrix}
		1&0\\0&0
	\end{pmatrix}
	\right] = 0 \quad
	\in 
	K_0(C^*(\bar B_{r_1}(\mathcal N_\psi))).	
\eeq

If $n$ is even, an analogous argument shows that the operator $\tilde Q_1$ as in \eqref{eq def tilde Q} satisfies
\[
\|\tilde Q_1-1\|<\frac{1}{100}.
\]
So
\beq{eq class Q1}
	[\tilde Q_1] = 0 \quad
	\in 
	K_1(C^*(\bar B_{r_1}(\mathcal N_\psi))).	
\eeq

Now let $a = \delta_1/\delta$, and suppose that $A \geq aL$. Then $b_1 \geq \frac{1}{A\delta}$, and, since $\delta_1 < \delta$,
\[
r_1 \geq \sqrt{\frac{b_1}{A\delta}}.
\]
So $b = b_1$ and $r= r_1$ satisfy the conditions of Propositions \ref{prop quasi proj} and  \ref{prop tilde Q Qb}, and we have
\[
\ind_q(D, \psi) \in K_{*}(C^*(\bar B_{r_1}(\mathcal N_\psi))).
\]
This index is exactly the class \eqref{eq class P1} or \eqref{eq class Q1}, and hence zero.

Finally, note that $r_1 = c_1 (AL)^{-1/2}$, with $c_1 = 1/\delta_1$.
\end{proof}

\section{The quantitative partitioned manifold index theorem}\label{sec quant index thm}

In this section, we state and  prove a quantitative version of the partitioned manifold index theorem for noncompact manifolds, Theorem \ref{thm:quantitativeIndex}, refining Theorem \ref{thm:recallPartitionIndex}. This is the key step in our proof of Theorem \ref{thm:bandwidthForBand}.

\subsection{Statement of the result}

\begin{theorem}[Quantitative partitioned manifold index theorem]\label{thm:quantitativeIndex}
	Let $(X^n,g)$ be a complete $\Spin$ manifold with bounded geometry, partitioned by a uniformly embedded hypersurface $N^{n-1}\subset X$. Let $\psi\colon X\to[-A,A]$ be a defining function of $N\subset X$ such that
	\[
	\psi^{-1}((-A,A))\subset\mathcal N_\psi\subset \bar B_R(N).
	\]
	Let $D$ be the Dirac operator on $X$. Let $r>0$ be as in Proposition \ref{prop quasi proj} or  \ref{prop tilde Q Qb}, depending on the parity of $n$.
	There exists a universal constant $c_2>0$ such that,
		 with respect to the coarse map
		\[
		\id\colon (N,d_{g_N})\longrightarrow (N,d_{c_2A^{-1}}),
		\]
	we have 
		\[
		\ind_q(D,\psi)= \id_*(\ind(D_N))\in K_{n-1}(C^*(N, d_{c_2 A^{-1}})),
		\]
		where $\ind(D_N)$ is the coarse index of the Dirac operator on $N$.
\end{theorem}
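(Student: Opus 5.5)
The plan is to reduce to the product case, Proposition \ref{prop:product}, by a locality argument. The quantitative index is built only from the operators $P_b$ or $Q_b$, compressed by $\sqrt{\chi}$. By finite propagation, such operators are determined up to small errors by the geometry in a bounded neighborhood of $N$.

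\textbf{Choice of data.} We choose $\psi$ and $\mathcal N_\psi$ adapted to the uniform geodesic normal neighborhood $U\cong N\times(-\varepsilon,\varepsilon)$ of Theorem \ref{thm:uniformTubular}. First we show that, by Lemma \ref{lem ind indep choices} and a homotopy of defining functions, we may replace $\psi$ by a defining function that depends only on the normal coordinate $t$. It should equal $\pm A$ for $|t|\ge \varepsilon'$, with $\varepsilon'$ small and Lipschitz constant of order $A/\varepsilon'$. The homotopy $\psi_s=(1-s)\psi_0+s\psi_1$ is legitimate as long as all the $\psi_s$ share the same constant $A$. Their transition regions lie in a common neighborhood, so the estimates of Lemma \ref{lem est Fb} hold uniformly in $s$ and the classes do not change. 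We take $b$ and $r$ as in Remark \ref{rem O A-1}, so $r$ is of order $A^{-1}$, which explains the scale $c_2A^{-1}$ in the statement.

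\textbf{Comparison with the product.} Next we compare $X$ with the model $X'=N\times\R$, with metric $dt^2+g_N$. We glue the region $U$ into both manifolds and interpolate the metric inside $U$. The bilipschitz bound in Theorem \ref{thm:uniformTubular} lets us deform $g|_U$ to the product metric, keeping bounded geometry and keeping $\psi$ fixed. Along the way, the quantitative index varies by a homotopy in the Roe algebra of the fixed set $\bar B_{r}(\mathcal N_\psi)$. Its distance function $d_r$ changes only bilipschitz-equivalently under the deformation, so the $K$-theory groups are canonically identified. The key tool for moving between $X$ and $X'$ is a localization statement: the operator $\sqrt{\chi}(P_b-\mathrm{diag}(1,0))\sqrt{\chi}$, or $\sqrt{\chi}(Q_b-1)\sqrt{\chi}$, computed on $X$ agrees up to small norm with the same expression computed on $X'$. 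To prove this, we approximate $\varphi(bB)$ by $\hat\varphi$-truncated wave operators of finite propagation. Their propagation is controlled by $b$ and the support of $\hat\varphi$, and the wave equation only sees the geometry within that distance of $\mathrm{supp}\,\chi$. The resulting approximate idempotents or unitaries therefore define the same class in $K_*(C^*(\bar B_r(\mathcal N_\psi)))$.

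\textbf{Conclusion.} On $X'$, Lemma \ref{lem part 1} identifies the quantitative index, pushed forward to $C^*(N\subset X')\cong C^*(N)$, with $\ind(D,\psi)$. Proposition \ref{prop:product} then gives $\ind(D,\psi)=\ind(D_N)$. What we actually need is an equality at the level of $\bar B_r(\mathcal N_\psi)$ rather than after pushing forward. We get it by naturality. In the product there is a coarse projection $N\times\R\to N$, and $d_r$ on the product neighborhood is comparable to $d_{g_N}$. Hence the product-level index already lies in $K_*(C^*(N,d_{g_N}))$, and pushing forward along $\id\colon(N,d_{g_N})\to(N,d_{c_2A^{-1}})$ yields the claim.

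\textbf{Main obstacle.} The localization/comparison step is the expected main obstacle. We must make the finite-propagation approximation of $\varphi(bB)$ quantitative, with error small compared to the $1/1000$ margin of Proposition \ref{prop quasi proj}. At the same time the propagation has to stay within $c_2A^{-1}$ of $\mathcal N_\psi$, where $X$ and the product agree. This is in tension with $b$ being of order $A^{-1}$. I would first try the decay estimate $\|\varphi(bB)-\varphi_R(bB)\|\le\sup|\varphi-\varphi_R|$, choosing $\hat\varphi_R$ supported in $[-R/b,R/b]$ for a universal $R$. This gives propagation $R\cdot$const$/A$, which is absorbed into $c_2$.
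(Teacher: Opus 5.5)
There is a genuine gap in your comparison step, and it comes from keeping the amplitude $A$ fixed. With $A$ fixed, Propositions \ref{prop quasi proj} and \ref{prop tilde Q Qb} force $r\ge (A\delta)^{-1}$. Since $\sqrt{\chi}$ must be $2/r$-Lipschitz, $\mathrm{supp}\,\chi$ reaches distance of order $(A\delta)^{-1}$ from $N$. Your finite-propagation localization then needs the geometry on a further neighbourhood whose width is a universal multiple of $A^{-1}$. This holds even after you replace $\psi$ by a steep function of $t$ with transition region in $|t|<\varepsilon'$: the region the quantitative index actually sees still has width of order $A^{-1}$, however small $\varepsilon'$ is.

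On the other hand, $X$ is only known to be product-like (up to the bi-Lipschitz constant $C$) on the uniform tubular neighbourhood $U\cong N\times(-\varepsilon,\varepsilon)$. Its width $\varepsilon$ is fixed by the embedding and has nothing to do with $A$. So your assertion that $X$ and $N\times\R$ agree within $c_2A^{-1}$ of $\mathcal N_\psi$ is unjustified, and it is false in general:
\begin{itemize}
\item outside $U$ the geometry of $X$ is arbitrary;
\item deforming $g$ inside $U$ leaves $X\setminus U$ untouched, yet $X\setminus U$ still enters $\varphi(bB)$ within the propagation range;
\item $d_r$ on $\bar B_r(\mathcal N_\psi)$ need not be bi-Lipschitz to any product metric.
\end{itemize}
You do identify this tension in your last paragraph, but you resolve it by assuming the very thing that has to be arranged.

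The missing idea is to enlarge the amplitude rather than keep it fixed.
\begin{itemize}
\item Choose $\varphi\colon X\to[-1,1]$ depending only on $t$ on $U$, with $|\varphi|=1$ outside $B_{\varepsilon/4}(N)$ and with the same sign as $\psi$ where $|\psi|=A$.
\item Use the homotopy $\psi_{u,v}=u\varphi+v\psi$, which equals $\pm(u+Av)$ on $\{|\psi|=A\}$.
\item By homotopy invariance and Lemma \ref{lem ind indep choices}(c), all these classes agree after pushing forward to $K_{n-1}(C^*(\bar B_{(\delta A)^{-1}}(\mathcal N_\psi)))$, as long as $u+Av\ge A$. Hence $\ind_q(D,\psi)$ is the push-forward of $\ind_q(D,u\varphi)$ for every large $u$.
\item Take $u$ with $c_3u^{-1}<\varepsilon/4$. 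Then Proposition \ref{prop excision} shows that $\ind_q(D,u\varphi)$ is determined by $\bar B_{c_3u^{-1}}(\mathcal N_\varphi)\subset\bar B_{\varepsilon/2}(N)\subset U$.
\end{itemize}
Only at this scale can your comparison with $N\times\R$ be carried out. Use a metric equal to $g$ on $N\times(-\varepsilon/2,\varepsilon/2)$ and to $dt^2+g_N$ for $|t|>\varepsilon$, followed by a path of uniformly bi-Lipschitz metrics to the product. Your concluding naturality argument then goes through as well.
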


\begin{remark}
In the setting of 
Theorem \ref{thm:recallPartitionIndex}, the combination of Lemma \ref{lem part 1} and Theorem \ref{thm:quantitativeIndex} implies that
\[
(i_{\bar B_{c_2A^{-1}}(\mathcal N_\psi)})_*(\ind(D_N)) = \ind(D, \psi).
\]
Because the composition
\[
K_*(C^*(N)) \xrightarrow{\cong} K_*(\bar B_{c_2 A^{-1}}(\mathcal{N}_{\psi})) \xrightarrow[\cong]{(i_{\bar B_{c_2A^{-1}}(\mathcal N_\psi)})_*} K_*(C^*(N \subset X))
\]
is the isomorphism \eqref{eq iso loc Roe} used in Theorem \ref{thm:recallPartitionIndex}, it follows that Theorem \ref{thm:recallPartitionIndex} is a special case of Theorem \ref{thm:quantitativeIndex}. 
\end{remark}

\begin{remark}
In this paper, and in particular in Theorem \ref{thm:quantitativeIndex}, we consider $\Spin$-Dirac operators. However, the construction of the quantitative partitioned manifold index, and Theorem \ref{thm:quantitativeIndex}, extend to more general Dirac-type operators. In Theorem \ref{thm:quantitativeIndex}, one then needs to specify a relation between $D$ and $D_N$, as in  \cite[Theorem 1.5]{Higson91}. We only discuss the case of $\Spin$-Dirac operators here, as this is what is needed to prove Theorem \ref{thm:bandwidthForBand}.
\end{remark}

Let us explain the difference between Theorems \ref{thm:recallPartitionIndex} and  \ref{thm:quantitativeIndex} by the following example.

\begin{example}\label{ex quant index thm}
	Let $Y=\R\times\R^{n-1}$ be the flat Euclidean space with $n\geq 3$, and let $\delta>0$ be a small number. Let $\mathcal L$ be the lattice $\Z^{n-1}$ in $\R^{n-1}$, and let $S\colon \mathcal L\to\mathcal L$ denote the reflection across the hyperplane orthogonal to the $x_1$-axis. Denote by $\mathcal L^+$ the collection of points in $\mathcal L$ with positive $x_1$-coordinate. 
	
	In $[0, \infty) \times \R^{n-1}$, remove the sets $B_\delta([1,+\infty)\times\{q\})$ and $B_\delta([1,+\infty)\times\{S(q)\})$ for each $q\in\mathcal L^+$. Attach to each removed region a cylinder $T_q\cong \partial B_\delta([1,+\infty)\times\{q\})\times[0,1]$. In this way we obtain a Riemannian manifold $(X,g)$. More precisely,
	\[
	X=\left(Y\setminus\bigcup_{q\in\mathcal L^+}\bigl(B_\delta([1,+\infty)\times\{q\})\cup B_\delta([1,+\infty)\times\{S(q)\})\bigr)\right)
	\cup\left(\bigcup_{q\in\mathcal L^+} T_q\right).
	\]
	We assume that all the $T_q$ are isometric, and that $g$ is a product metric on $\partial B_\delta([2,+\infty)\times\{q\})\times[0,1] \subset T_q$. 
	
	Note that $(X,g)$ has bounded geometry. Let $\rho$ be a smooth modification of the signed distance function to the slice $\{0\}\times\R^{n-1}$; in particular, $\rho$ is essentially the projection onto the first factor $\R$ in $Y$. 
	
	Let $f$ be a $2$-Lipschitz, smooth, non-decreasing function on $\R$ such that $f(x)\equiv \pm1$ for $|x|\geq 1$. Define $\psi_a=f \circ (\rho-a)$, a family of smooth functions on $X$ parametrized by $a \in \R$. As in Section \ref{sec:3.1} and \cite{HochsdeKok25}, the index class $\ind(D,\psi_a)\in K_*(C^*(\rho^{-1}(a)\subset X))$ is independent of $a$, and is equal to zero. Indeed, take $a=10$. Then $\rho^{-1}(10)$ is obtained from $\R^{n-1}$ by removing the $\delta$-balls $B_\delta(q)$ and $B_\delta(S(q))$ for all $q\in\mathcal L^+$ and attaching cylinders $T_q'\cong S^{n-2}\times[0,1]$. In this case $N=\rho^{-1}(10)$ satisfies the hypotheses of Theorem \ref{thm:recallPartitionIndex}; in particular, the identity map
	\[
	\id\colon (N,d_{g_N})\longrightarrow (N,d_g|_N)
	\]
	is a coarse equivalence. Therefore $\ind(D,\psi_a)\in K_*(C^*(N\subset X))$ vanishes by Theorem \ref{thm:recallPartitionIndex}, since
	\[
	K_*(C^*(N))\cong K_*(C^*(\mathcal L^+))=0.
	\]
	
	However, by Lemma \ref{lem part 1}, $\ind(D,\psi_a)$ admits a preimage 
	\[
	\ind_q(D,\psi_a)\in K_*\!\left(C^*([a-c,a+c]\times\R^{n-1})\right)
	\]
	under the map
	\[
	i_{a,*}\colon K_*\!\left(C^*([a-c,a+c]\times\R^{n-1})\right)\longrightarrow 
	K_*(C^*(\{a\}\times\R^{n-1}\subset X))
	\]
	induced by inclusion, for some $a\ll 0$ and  $c>0$. In particular, although $i_{a,*}(\ind_q(D,\psi_a))=0$ for every $a\in\R$, we nevertheless have $\ind_q(D,\psi_a)\neq 0$ for $a\ll 0$ by Theorem \ref{thm:quantitativeIndex}.
\end{example}

\subsection{An excision property}

Before proving Theorem \ref{thm:quantitativeIndex}, we first introduce an alternative representative of $\ind_q(D,\psi)$ which is equivalent to the original one but has the advantage of depending only on the geometry near $\mathcal N_\psi$. This leads to an excision property of the quantitative partitioned index, which will be used to prove Theorem \ref{thm:quantitativeIndex}.

Let $B$ be the operator \eqref{eq B odd} or \eqref{eq B even}, depending on $n$.  Given $b>0$, let    $F_b = \frac{bB}{\sqrt{1+(bB)^2}}$ as before.
\begin{lemma}\label{lem GbK}
Let $\delta \in (0,1/2)$.
Let  $b=r=\frac{1}{A\delta}$.
There is a bounded operator $G_{\delta}$ with propagation no more than $(A\delta^2)^{-1}$, such that $F_{(A\delta)^{-1}} - G_{\delta}$ is locally compact, and
\begin{align}
\|F_{(A\delta)^{-1}} - G_{\delta}\| &\leq \frac{2}{\pi} e^{-\delta^{-1}};  \label{eq Fb GbK}\\
    \|[G_{\delta},\sqrt{\chi}]\|
       & \le 12 \delta^{1/2} + 4 e^{-\delta^{-1}}; \label{eq G.1}\\
\label{eq G.2}
    \|(G_{\delta}^2 - 1)(1-\chi)\|
        &\le \delta+ 4 e^{-\delta^{-1}}.
\end{align}
Furthermore, $G_{\delta}^2 - 1$ lies in $M_2(C^*(X))$ if $n$ is odd and in $C^*(X)$ if $n$ is even.
\end{lemma}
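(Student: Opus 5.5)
We will obtain $G_\delta$ by truncating the Fourier representation of the bounded transform, using finite propagation speed of the wave operators $e^{isB}$. The operator $B$ is $D$ (or its doubled version) plus a bounded potential built from $\psi$, so it is essentially self-adjoint. It has unit propagation speed, and $e^{isB}$ has propagation at most $|s|$.

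Write $\phi(x)=\frac{x}{\sqrt{1+x^2}}$, so that $F_b=\phi(bB)$. The plan is to decompose $\phi=\phi_1+\phi_2$. Here $\phi_1$ is smooth and odd with $\phi_1(x)\to\pm1$, and its distributional Fourier transform is supported in $[-T,T]$ with $T=(A\delta^2)^{-1}/b=\delta^{-1}$. The remainder $\phi_2$ satisfies $\sup|\phi_2|\le\frac{2}{\pi}e^{-\delta^{-1}}$. Concretely, the distributional Fourier transform of $\phi$ is $-i\sqrt{2/\pi}\,\sgn(\xi)K_0(|\xi|)$, a principal-value-type distribution away from $0$ with exponential decay. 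We cut it off smoothly to $|\xi|\le T$, or more simply truncate it by $1_{[-T,T]}$ and control the tail by $\int_T^\infty K_0\le \frac{\pi}{2}\cdot$ (an $e^{-T}$ bound). This should produce exactly the constant $\frac{2}{\pi}e^{-\delta^{-1}}$.

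We then set $G_\delta=\phi_1(bB)=\frac{1}{\sqrt{2\pi}}\int_{-T}^{T}\hat\phi_1(s)e^{isbB}\,ds$, understood via the principal-value/odd-symbol decomposition. It has propagation at most $bT=(A\delta^2)^{-1}$, and \eqref{eq Fb GbK} holds by the spectral theorem. Local compactness of $F_b-G_\delta=\phi_2(bB)$ follows because $\phi_2\in C_0(\R)$: both $\phi$ and $\phi_1$ tend to $\pm1$, so their difference vanishes at infinity, and then \cite[Proposition 3.6]{Roe1996indextheory} applies. The same reasoning gives $G_\delta^2-1=\phi_1^2(bB)-1$ with $\phi_1^2-1\in C_0(\R)$, which lies in $C^*(X)$ (in $M_2(C^*(X))$ for odd $n$). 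For this we need $\phi_1^2-1$ to have compactly supported Fourier transform modulo a term handled by the same proposition, which in turn ensures finite propagation approximants.

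The estimates \eqref{eq G.1} and \eqref{eq G.2} then follow from Lemma \ref{lem est Fb} with $b=r=(A\delta)^{-1}$. In that case $12\sqrt b/(r\sqrt A)=12\delta^{1/2}$ and $1/(bA)=\delta$. Using $\|[G,\sqrt\chi]\|\le\|[F_b,\sqrt\chi]\|+2\|F_b-G\|$ and
\[
\|(G^2-1)(1-\chi)\|\le\|(F_b^2-1)(1-\chi)\|+\|G^2-F_b^2\|,
\]
together with $\|G^2-F_b^2\|\le(\|G\|+\|F_b\|)\|G-F_b\|\le 3\cdot\frac{2}{\pi}e^{-\delta^{-1}}<4e^{-\delta^{-1}}$, we obtain the stated constants.

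The main obstacle is the Fourier-analytic step. We must produce a truncation $\phi_1$ with compactly supported Fourier transform that still has limits $\pm1$ at $\pm\infty$ and satisfies the sharp sup-norm bound. The issue is the singularity of $\hat\phi$ at $0$: this is the $\sgn$-type behaviour, a $1/\xi$ principal value plus a $\log|\xi|$ term. We would handle it by truncating only the tail $|\xi|>T$. This keeps the singularity at the origin intact and hence preserves the limits $\pm1$. We then bound the discarded part pointwise by $\sqrt{2/\pi}\cdot\frac{1}{\sqrt{2\pi}}\cdot 2\int_T^\infty K_0(\xi)\,d\xi\le\frac{2}{\pi}e^{-T}$, using $K_0(\xi)\le\sqrt{\pi/(2\xi)}e^{-\xi}$ for $\xi\ge T\ge2$.
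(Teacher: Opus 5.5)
Your proposal follows essentially the same route as the paper: write $F_b$ as a principal-value Fourier integral of the wave operators $e^{i\xi bB}$, truncate at $|\xi|\le\delta^{-1}$ to get propagation $b\delta^{-1}=(A\delta^2)^{-1}$, bound the discarded tail using the exponential decay of the Bessel kernel, and deduce \eqref{eq G.1}--\eqref{eq G.2} from Lemma \ref{lem est Fb} by perturbation. Your $C_0(\R)$ functional-calculus argument for the local compactness of $F_b-G_\delta$ and for $G_\delta^2-1\in C^*(X)$ is a slightly cleaner justification than the paper's.

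One correction: the kernel is $\sgn(\xi)K_1(|\xi|)$, not $\sgn(\xi)K_0(|\xi|)$. Only $K_1(\xi)\sim 1/\xi$ produces the principal-value singularity and the limits $\pm1$ that you rely on; with $K_0$ the inverse transform would lie in $C_0(\R)$. With the correct kernel the tail bound reads $\frac{2}{\pi}\int_T^\infty K_1=\frac{2}{\pi}K_0(T)\le\frac{2}{\pi}e^{-T}$ for $T\ge 2$, so your constants survive.
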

\begin{proof}
The distributional Fourier transform of the function $x \mapsto x(1+x^2)^{-1/2}$ is
\[
 \xi \mapsto   \frac{2}{i}\,\sgn(\xi)\,K_1(|\xi|),
\]
where $\sgn$ denotes the sign function and $K_1$ is the modified Bessel function of the second kind. This function has a singularity at $\xi=0$, but is smooth and decays exponentially as $|\xi|\to\infty$.
Consequently,
\[
F_b 
    = \frac{1}{\pi i}\int_{\mathbb R} \sgn(\xi)\,K_1(|\xi|)\, e^{i\xi bB}\, d\xi
\]
in the weak operator topology (see Proposition 10.3.5 in \cite{Higson00} or Proposition D.2.3 in \cite{higherindex}).

For any $K\ge 2$, define the truncated operator
\[
    G_{b,K} \coloneqq 
    F_b - \frac{1}{\pi i}
        \int_{|\xi|\ge K} \sgn(\xi)\,K_1(|\xi|)\, e^{i\xi bB}\, d\xi
    = \frac{1}{\pi i}
        \int_{-K}^{K} \sgn(\xi)\,K_1(|\xi|)\, e^{i\xi bB}\, d\xi.
\]
This operator has propagation at most $bK$. 

Since $K_1$ decays exponentially and satisfies
\[
    |K_1(\xi)| \le e^{-|\xi|} \qquad (\xi\ge 2),
\]
the tail integral is absolutely convergent, and
\[
    \left\|
    \frac{1}{\pi i}
    \int_{|\xi|\ge K}
        \sgn(\xi)\,K_1(|\xi|)\,e^{i\xi bB}\, d\xi
    \right\|
    \le \frac{2}{\pi}e^{-K}.
\]
This implies that
 $\|F_b - G_{b,K}\| \leq \frac{2}{\pi} e^{-K}$. 
Analogously to the proof of Lemma~\ref{lem est Fb}, we find that
\[
\begin{split}
    \|[G_{b,K},\sqrt{\chi}]\|
       & \le 12\,\frac{\sqrt b}{r\sqrt A} + 4 e^{-K}; \\
    \|(G_{b,K}^2 - 1)(1-\chi)\|
        &\le \frac{1}{bA} + 4 e^{-K}.
\end{split}
\]
If  $\delta \in (0, 1/2)$, and we choose $r=b = (A\delta)^{-1}$ and $K = \delta^{-1}>2$, then the operator
$G_{\delta}:= G_{(A\delta)^{-1},\delta^{-1}}$ has properties \eqref{eq Fb GbK}--\eqref{eq G.2}.

The operator $G_{\delta}^2 - 1$ has finite propagation. Let $f \in C_0(X)$.
Using compactness of $(F_{(A\delta)^{-1}} - G_{\delta})f$ and $[F_{(A\delta)^{-1}}, f]$ (\cite[Lemma 10.6.4]{Higson00}), we see that
\[
(G_{\delta}^2-1)f\sim (F_{(A\delta)^{-1}}^2 -1)f \sim 0.
\]
where $\sim$ denotes equality modulo compact operators. Similarly, $f(G_{\delta}^2-1)$ is compact, so $G_{\delta}^2-1$ is locally compact. Hence the last statement follows.
\end{proof}

\begin{proposition}\label{prop excision}
Assume the same hypotheses as in Theorem~\ref{thm:quantitativeIndex}. Then there exists a universal constant $c_3>0$ and a representative of
\[
\ind_q(D,\psi)\in K_{n-1}\bigl(C^*(\bar B_{c_3A^{-1}}(\mathcal N_\psi))\bigr)
\cong
K_{n-1}\bigl(C^*(N,d_{c_3A^{-1}})\bigr)
\]
that depends only on the geometry of $\bar B_{c_3A^{-1}}(\mathcal N_\psi)$, namely if $\bar B_{c_3A^{-1}}(\mathcal N_\psi)$ isometrically embeds into two complete manifolds satisfying the assumptions in Theorem~\ref{thm:quantitativeIndex}, then their index classes as above coincide.
\end{proposition}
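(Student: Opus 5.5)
The plan is to replace $F_b$ in the construction of $\ind_q(D,\psi)$ by the finite-propagation approximation $G_\delta$ from Lemma \ref{lem GbK}, with $b=r=(A\delta)^{-1}$ as in Remark \ref{rem O A-1}. Finite propagation is what makes the representative local. Concretely, we let $\chi$ be supported in $\bar B_{(A\delta)^{-1}}(\mathcal N_\psi)$.

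In the odd case we form $P_G$ from $G_\delta=\begin{pmatrix}0&V_G\\U_G&0\end{pmatrix}$ by the formula \eqref{eq def Pb}, and set
\[
\tilde P_G=\begin{pmatrix}1&0\\0&0\end{pmatrix}+\sqrt\chi\Bigl(P_G-\begin{pmatrix}1&0\\0&0\end{pmatrix}\Bigr)\sqrt\chi .
\]
In the even case we set $\tilde Q_G=1+\sqrt\chi(\exp(2\pi i\,\tfrac{G_\delta+1}{2})-1)\sqrt\chi$, using Lemma \ref{lem exp series} to write the middle factor as $(G_\delta^2-1)$ times a convergent series in $G_\delta$.

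By \eqref{eq Fb GbK}, $\|P_G-P_b\|$ and $\|\tilde Q_G-\tilde Q\|$ are bounded by universal multiples of $e^{-\delta^{-1}}$, since all entries are polynomials or exponential series in operators of norm $\le 1+e^{-\delta^{-1}}$. Combined with Propositions \ref{prop quasi proj} and \ref{prop tilde Q Qb}, after possibly shrinking $\delta$, $\tilde P_G$ is a quasi-idempotent with $\|\tilde P_G^2-\tilde P_G\|<1/1000$ and $\|\tilde P_G-\tilde P\|$ small, and $\tilde Q_G$ is invertible and close to $\tilde Q$. Lemma \ref{lemma:Theta-P} then gives $\Theta(\tilde P_G)$ close to $\Theta(\tilde P)$, so the straight-line homotopy stays within quasi-idempotents, respectively invertibles. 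Hence $[\Theta(\tilde P_G)]-[\mathrm{diag}(1,0)]$, respectively $[\tilde Q_G]$, represents $\ind_q(D,\psi)$. The differences $P_G-\mathrm{diag}(1,0)$ and $\tilde Q_G-1$ lie in the Roe algebra because every nonconstant entry contains a factor $G_\delta^2-1$, which is locally compact and of finite propagation by the last statement of Lemma \ref{lem GbK}. Cutting with $\sqrt\chi$ places them in $C^*(\bar B_{(A\delta)^{-1}}(\mathcal N_\psi))$. We then set $c_3=\delta^{-1}$, or a slightly larger constant to absorb the neighbourhood used below.

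For the locality, the key point is that for $|\xi|\le\delta^{-1}$ the wave operator $e^{i\xi bB}$ has propagation at most $|\xi| b\le (A\delta^2)^{-1}$, by finite propagation speed of $B=D+$ (bounded potential). So the action of $e^{i\xi bB}$ on sections supported in a set $Z$ depends only on $D$ and $\psi$ over the $(A\delta^2)^{-1}$-neighbourhood of $Z$. The formula for $\tilde P_G$ is $\sqrt\chi$ times a polynomial in $G_\delta$ times $\sqrt\chi$, and similarly for $\tilde Q_G$. Expanding, each term is an integral of products of boundedly many wave operators. To keep the number of factors bounded in the exponential, we truncate its power series using the factor $G_\delta^2-1$ and the series of Lemma \ref{lem exp series}; the tail is small uniformly because $\|G_\delta\|\le 2$.

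Each such term, sandwiched by $\sqrt\chi$, therefore depends only on the geometry, spin structure, $D$ and $\psi$ on $\bar B_{c_3A^{-1}}(\mathcal N_\psi)$, with $c_3$ a universal multiple of $\delta^{-2}$. If this neighbourhood embeds isometrically into two manifolds $X,X'$ (carrying compatible spin structures and $\psi$), the two truncated representatives agree under the identification of $L^2$ over that set. Both classes are then represented by the same element, up to small perturbation, of $K_*(C^*(\bar B_{c_3A^{-1}}(\mathcal N_\psi)))$. The enlarged $c_3$ is absorbed by Lemma \ref{lem ind indep choices}(c).

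I expect the main obstacle to be making this locality rigorous while keeping the estimates universal. The points needing care are that finite propagation speed holds uniformly for $B$ (using completeness), that the truncation of the exponential series is uniform, and that all error terms stay below the thresholds of Lemma \ref{lemma:Theta-P}, so that the homotopies remain within quasi-idempotents, respectively invertibles.
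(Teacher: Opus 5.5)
Your proposal is correct and follows essentially the paper's route: replace $F_b$ by the finite-propagation approximant $G_\delta$ of Lemma \ref{lem GbK}, truncate the exponential series in the even case, compare with $\tilde P$ and $\tilde Q$ via \eqref{eq Fb GbK} and Lemma \ref{lemma:Theta-P}, and read off locality from the propagation bound, giving $c_3$ of order $\delta^{-2}$ (the paper takes $\delta^{-1}+5\delta^{-2}$, respectively $\delta^{-1}+N\delta^{-2}$). The only differences are cosmetic: you get the quasi-idempotent estimates for $\tilde P_G$ by perturbing from $\tilde P$ instead of rerunning Proposition \ref{prop quasi proj} with \eqref{eq G.1}--\eqref{eq G.2}, and you make explicit the finite-propagation-speed locality argument that the paper leaves implicit.
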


\begin{proof}
We first consider the case where $n$ is odd.
Let $\delta$, $b$, $r$ and $G_\delta$ be as in Lemma~\ref{lem GbK}. Write
\[
G_\delta=
\begin{pmatrix}
    0 & V_\delta \\
    U_\delta & 0
\end{pmatrix}.
\]
Let $\bar P$ be the operator defined from $G_\delta$ as in~\eqref{eq def tilde P}, namely,
\begin{equation}\label{eq:1000-barP}
\bar P
=
\begin{pmatrix}
    1 & 0 \\
    0 & 0
\end{pmatrix}
+
\sqrt{\chi}
\begin{pmatrix}
   -(1-U_\delta V_\delta)^2
   &
   (2-U_\delta V_\delta)U_\delta(1-V_\delta U_\delta)
   \\
   V_\delta(1-U_\delta V_\delta)
   &
   (1-V_\delta U_\delta)^2
\end{pmatrix}
\sqrt{\chi}.
\end{equation}
By the last point in Lemma \ref{lem GbK}, this operator lies in $M_2(C^*(X)^+)$.  
Because $\supp(\chi) \subset \bar B_{(A\delta)^{-1}}(\mathcal{N}_{\psi})$, in fact
\[
\bar P-
\begin{pmatrix}
    1 & 0 \\
    0 & 0
\end{pmatrix}
\in
M_2\bigl(C^*(\bar B_{(\delta A)^{-1}}(\mathcal N_\psi))\bigr).
\]

Arguing as in the proof of Proposition \ref{prop quasi proj}, using \eqref{eq G.1} instead of \eqref{eq1.1} and \eqref{eq G.2} instead of \eqref{eq1.2}, we find that for $\delta$ small enough,
\[
\|\bar P\|\leq 3 \quad \text{and} \quad
\|\bar P^2-\bar P\|<\frac{1}{1000}.
\]
Moreover, the propagation of $\bar P$ is bounded by $5(A\delta^2)^{-1}$, since the entries of \eqref{eq:1000-barP} are compositions of local operators, and at most $5$ matrix components of $G_{\delta}$.
Set
\[
c_3^0=\delta^{-1}+5\delta^{-2}.
\]
We may then regard $\bar P$ as an element of
\[
M_2\bigl(C^*(\bar B_{c_3^0A^{-1}}(\mathcal N_\psi))^+\bigr),
\]
and in particular, $\bar P$ depends only on the geometry of $\bar B_{c_3^0A^{-1}}(\mathcal N_\psi)$.

Using \eqref{eq Fb GbK} and Lemma \ref{lemma:Theta-P}, we can ensure that  $\|\Theta(\bar P) - \Theta(\tilde P)\|$ is smaller than (say) $\frac{1}{100}$, by taking $\delta$ small enough. Then
 the idempotents $\Theta(\bar P)$ and $\Theta(\tilde P)$ are homotopic. Therefore the index class $\ind_q(D,\psi)$ is also represented by
\[
[\Theta(\bar P)]-
\left[
\begin{pmatrix}
    1 & 0 \\
    0 & 0
\end{pmatrix}
\right]
\in
K_0\bigl(C^*(\bar B_{c_3^0A^{-1}}(\mathcal N_\psi))\bigr),
\]
where the representative only depends on the geometry of $\bar B_{c_3^0A^{-1}}(\mathcal N_\psi)$.

\medskip
Now suppose that $n$ is even. Recall that the index class is represented by the operator $\tilde Q$ in \eqref{eq def tilde Q}.
With $a_k$ as in Lemma \ref{lem exp series}, we write for $N \in \N$, 
\begin{equation}
\tilde Q_N :=
1
+
\sqrt{\chi}
\left(\frac 1 4(F_b^2-1)\sum_{k=0}^N a_k \left(\frac{F_b+1}{2}\right)^k
\right)
\sqrt{\chi}.
\end{equation}
Then by \eqref{eq Qb - 1},  $\|\tilde Q - \tilde Q_N\|$ decays faster than any exponential function of $N$, uniformly in $b$ since $\|F_b\|\leq 1$. Choose $N$ such that $\|\tilde Q - \tilde Q_N\|<1/2000$.

We define
\begin{equation}
\bar Q
=
1
+
\sqrt{\chi}
\left(\frac 1 4(G_\delta^2-1)\sum_{k=0}^N a_k \left(\frac{G_\delta+1}{2}\right)^k
\right)
\sqrt{\chi}.
\end{equation}
This operator lies in $C^*(X)^+$ by the last point in Lemma \ref{lem GbK}. 
By \eqref{eq Fb GbK}, we can choose $\delta$ small enough such that $\|\bar Q - \tilde Q_N\|<1/2000$. 
Then
\beq{eq bar Q tilde Q}
\|\bar Q-\tilde Q\|<\frac{1}{1000},
\eeq
and hence $\bar Q$ is invertible. 
Moreover, the propagation of $\bar Q$ is bounded by $N(\delta^2A)^{-1}$, by Lemma \ref{lem GbK}.
Set
\[
c_3^1=\delta^{-1}+N\delta^{-2}.
\]
Then $\bar Q$ may be regarded as an element of
$
C^*(\bar B_{c_3^1A^{-1}}(\mathcal N_\psi))^+$,
and depends only on the geometry of $\bar B_{c_3^1A^{-1}}(\mathcal N_\psi)$.
In this case, \eqref{eq bar Q tilde Q} implies that the index class $\ind_q(D,\psi)$ is represented by
\[
[\bar Q]
\in
K_1\bigl(C^*(\bar B_{c_3^1A^{-1}}(\mathcal N_\psi))\bigr),
\]
which again depends only on the geometry of $\bar B_{c_3^1A^{-1}}(\mathcal N_\psi)$.

Finally, the proof is completed if we set
$c_3=\max\{c_3^0,c_3^1\}.$
\end{proof}

\begin{remark}
The proof of the partitioned manifold index theorem for noncompact hypersurfaces in \cite[Subsection 4.5]{HochsdeKok25} is based on the idea that this index is ``determined near the partitioning hypersurface $N$''. This is used to reduce the problem to the case where the ambient manifold is $N \times \R$ (\cite[Theorem 3.1]{HochsdeKok25}, or Proposition \ref{prop:product}). This argument is based on the assumption \eqref{eq cond N equiv} that the metric spaces $(N, d_{g_N})$ and $(N, d_g|_N)$ are coarsely equivalent (used in the proof of \cite[Proposition 4.16]{HochsdeKok25}). Proposition \ref{prop excision} is a quantitative refinement of this idea, where this assumption is not needed. 
\end{remark}
\subsection{Proof of the quantitative partitioned manifold index theorem}

\begin{proof}[Proof of Theorem \ref{thm:quantitativeIndex}]
   Let $B_\varepsilon(N)=U\cong N \times (-\varepsilon,\varepsilon)$ be the uniform geodesic normal neighborhood of $N$ in $X$ as in Theorem \ref{thm:uniformTubular}. Choose $\varepsilon$ small enough so that $B_{\varepsilon}(N) \subset \psi^{-1}((-A,A))$. Let $\varphi\colon X \to [-1,1]$ be a smooth function such that 
   \begin{itemize}
   \item $|\varphi(x)|=1$ for all $x \in X \setminus B_{\varepsilon/4}(N)$;
   \item if $\psi(x) = \pm A$, then $\varphi(x) = \pm 1$, with the same signs;
   \item on $U \cong  N \times  (-\varepsilon,\varepsilon)$, the function $\varphi$ is an function of the parameter in $(-\varepsilon, \varepsilon)$, and constant in the factor $N$.
   \end{itemize}
    
For $u,v\geq 0$, define
\[
\psi_{u,v}=u\varphi+v\psi.
\]
Note that
\[
\psi_{u,v}=\pm(u+Av)
\quad\text{on}\quad
\{p\in X:\ |\psi(p)|=A\}.
\]
So
\[
\psi_{u,v}^{-1}( (-(u+vA, u+vA))) \subset \psi^{-1}((-A,A)) \subset \mathcal{N}_{\psi}.
\]
Therefore, as in Definitions~\ref{def quant index odd} and~\ref{def quant index even}, with $b$ and $r$ as in Remark \ref{rem O A-1}, the pair $(D,\psi_{u,v})$ defines an index class
\[
\ind_q(D,\psi_{u,v})
\in
K_{n-1}\bigl(C^*(\bar B_{(\delta(u+Av))^{-1}}(\mathcal N_\psi))\bigr)
\]
whenever $u$ and $v$ are not simultaneously zero.
By homotopy invariance and part (c) of Lemma \ref{lem ind indep choices}, the class $\ind_q(D,\psi_{u,v})$ is independent of $u$ and $v$ when mapped into
\[
K_{n-1}\bigl(C^*(\bar B_{(\delta A)^{-1}}(\mathcal N_\psi))\bigr),
\]
provided that $u+Av\geq A$.

Now consider the case $v=0$, so that $\psi_{u,0}=u\varphi$.
Define
\[
\mathcal N_\varphi\coloneqq \{p\in X:\ |\varphi(p)|<1\} \subset B_{\varepsilon/4}(N)
\cong N \times
(-\varepsilon/4,\varepsilon/4).
\]
Let $c_3$ be as in Proposition~\ref{prop excision}.
If $u$ is sufficiently large so that $c_3u^{-1}<\varepsilon/4$ and $u>A$, then we have
\[
\ind_q(D,u\varphi)
\in
K_{n-1}\bigl(C^*(\bar B_{c_3 u^{-1}}(\mathcal{N}_{\varphi}))\bigr),
\]
and by Proposition~\ref{prop excision}, this class depends only on the geometry of 
\[
\bar B_{c_3 u^{-1}}(\mathcal{N}_{\varphi}) \subset 
\bar B_{\varepsilon/2}(N).
\]
It follows from homotopy invariance and part (c) of Lemma \ref{lem ind indep choices} that
\begin{equation}\label{eq:indq_D_u_phi}
\ind_q(D,\psi)
=
(i_{ \bar B_{c_3 u^{-1}}(\mathcal{N}_{\varphi}) } )_*\bigl(\ind_q(D,u\varphi)\bigr) \quad \in K_{n-1}(C^*(\bar B_{(\delta A)^{-1}}(\mathcal N_\psi))),
\end{equation}
where
\[
i_{\bar B_{c_3 u^{-1}}(\mathcal{N}_{\varphi})}\colon \bar B_{c_3 u^{-1}}(\mathcal{N}_{\varphi})\longrightarrow \bar B_{(\delta A)^{-1}}(\mathcal N_\psi)
\]
is the inclusion map.

By part 2 of Theorem \ref{thm:uniformTubular}, the distance function on $U$ induced by $g$ is bi-Lipschitz equivalent to the distance function induced by the product metric.
Therefore, the inclusion-induced map
\[
K_*(C^*(N))\rightarrow K_*(C^*(\bar B_{c_3 u^{-1}}(\mathcal{N}_{\varphi})))
\]
is an isomorphism, and hence we may regard
\[
\ind_q(D,u\varphi)\in K_*\bigl(C^*(N)\bigr).
\]

By part 2 of Theorem \ref{thm:uniformTubular} and \cite[Theorem~1.9]{MR4937352}, there exists a complete Riemannian metric $\bar g$ with bounded geometry on $N \times \R$ satisfying:
\begin{itemize}
    \item $\bar g=g$ on $N \times (-\varepsilon/2,\varepsilon/2)$;
    \item $\bar g=dt^2+g_N$ for $|t|>\varepsilon$;
    \item $\bar g$ is bi-Lipschitz equivalent to $dt^2+g_N$.
\end{itemize}
Using the same construction as above and in Proposition~\ref{prop excision}, we obtain an index class
\[
\ind_q(\bar D,u\varphi)
\in
K_*\bigl(C^*(N \times [-\varepsilon/2,\varepsilon/2],d_{\bar g})\bigr)
\cong
K_*\bigl(C^*(N)\bigr),
\]
where $\bar D$ denotes the Dirac operator associated with $\bar g$.
By the excision property in Proposition~\ref{prop excision}, we have
\beq{eq excision u phi}
\ind_q(\bar D,u\varphi)=\ind_q(D,u\varphi)\in K_*\bigl(C^*(N)\bigr).
\eeq

Again by \cite[Theorem~1.9]{MR4937352}, there exists a path of complete Riemannian metrics $\{g_s\}_{s\in[0,1]}$ with uniformly bounded geometry on $N \times \R$ such that
\[
g_0=dt^2+g_N,
\qquad
g_1=\bar g,
\]
and all $g_s$ are uniformly bi-Lipschitz equivalent to $dt^2+g_N$.
By homotopy invariance and \eqref{eq excision u phi}, we obtain
\beq{eq ind q ind N}
\ind_q(D_{N\times\R},u\varphi)
=
\ind_q(\bar D,u\varphi)=
\ind_q(D,u\varphi)
\in
K_*\bigl(C^*(N)\bigr),
\eeq
where $D_{N\times\R}$ denotes the Dirac operator associated with the product metric. 
By  
 Lemma \ref{lem part 1}, Proposition~\ref{prop:product},   and the form of the isomorphism \eqref{eq iso loc Roe} used there,  the left hand side of \eqref{eq ind q ind N} equals $\ind(D_N)$. By \eqref{eq:indq_D_u_phi}, we conclude that
\[
\ind_q(D,\psi)
=
(i_{\bar B_{c_3 u^{-1}}(\mathcal{N}_{\varphi})})_*\bigl(\ind(D_N)) \quad \in K_{n-1}(C^*(\bar B_{(\delta A)^{-1}} (\mathcal{N}_{\psi})  )).
\]
By functoriality in Theorem \ref{thm:coarseMap}, the right hand side equals $(\id)_*(\ind(D_N))$. So Theorem \ref{thm:quantitativeIndex} follows, with $c_2 = \delta^{-1}$.
\end{proof}

\section{Proof of the noncompact band-width inequality} \label{sec proof bandwidth}

In this section, we prove Theorem \ref{thm:bandwidthForBand}, and illustrate with an example that condition 1 in this theorem is necessary.

\subsection{Proof of Theorem \ref{thm:bandwidthForBand}} 

We begin with a band-width inequality for complete manifolds.

\begin{theorem}\label{thm noncpt bandwidth}
	Let $(X^n,g)$ be a complete $\Spin$ Riemannian manifold with bounded geometry, partitioned by a uniformly embedded hypersurface $N^{n-1}$. 
	Given $\ell>0$ and $a\in(-\ell/2,\ell/2)$, consider the $1$-Lipschitz signed distance function
	\[
	\rho(x)=
	\begin{cases}
		\min\{\,a + d_g(x,N),\, \ell/2\,\}, & x\in X_+ \cup N,\\[0.3em]
		\max\{\,a - d_g(x,N),\, -\ell/2\,\}, & x\in X_-.
	\end{cases}
	\]
	Suppose that
	\begin{enumerate}
		\item for every $0<t<\ell/2$, the identity map 
		\[
		\id \colon (N,d_{g_N}) \longrightarrow (N,d_{\rho^{-1}((-t,t))})
		\]
		is a coarse equivalence, where $d_{\rho^{-1}((-t,t))}$ is the restriction of the Riemannian distance on $\rho^{-1}((-t,t))$, and
		\item the Dirac operator $D_N$ has non-zero index in $K_{n-1}(C^*(N))$.
	\end{enumerate}
	Then
	\beq{eq band width X}
	\inf_{x\in X; |\rho(x)|<\ell/2}\Sc_g(x)\leq \frac{4\pi^2(n-1)}{n\ell^2}.
	\eeq
\end{theorem}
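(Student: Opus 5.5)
We argue by contradiction. Suppose that $\Sc_g\geq \sigma>\frac{4\pi^2(n-1)}{n\ell^2}$ on $\rho^{-1}((-\ell/2,\ell/2))$. The goal is to produce a defining function $\psi$ for which the hypothesis \eqref{eq D psi pos} of Lemma \ref{lem part 2} holds. Then the quantitative index vanishes in the $K$-theory of the Roe algebra of a neighbourhood $\rho^{-1}((-t,t))$, while Theorem \ref{thm:quantitativeIndex} together with condition 1 identifies that index with $\ind(D_N)\neq 0$.

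\textbf{Step 1: the Callias potential.} Following Cecchini and Zeidler, set $\psi=h\circ\rho$. Here $h$ solves the Riccati-type equation
\[
\frac{n}{n-1}h^2-2|h'|+\frac{n}{n-1}\cdot\frac{\sigma'}{4}\geq 0 ,
\]
with $\frac{4\pi^2(n-1)}{n\ell^2}<\sigma'<\sigma$. Concretely, take $h(s)=-\lambda\tan(\lambda s)$ with $\lambda=\frac{n\sqrt{\sigma'}}{2(n-1)}\cdot\frac{n-1}{n}$, suitably normalized. Such an $h$ blows up at $s=\pm\pi/(2\lambda)$, and $\pi/\lambda<\ell$. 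Cut $h$ off at a large value $A$ so that it is constant $\pm A$ near $\rho=\pm\ell/2$, while the inequality fails only on a region where $h^2$ is already huge. We also shift the zero of $h$ so that $N=\psi^{-1}(h(a))$. Since $\rho$ is only $1$-Lipschitz, we smooth it first.

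The Schrödinger–Lichnerowicz formula, combined with the refined Friedrich-type estimate $|Du|^2\geq\frac{n}{n-1}\ldots$ of Cecchini–Zeidler, gives
\[
(D\pm i\psi)^*(D\pm i\psi)=D^2+\psi^2\pm i\,c(d\psi)\geq \tfrac{\Sc}{4}+\psi^2-|\psi'|\ \ (\text{spectrally refined}).
\]
This is bounded below by some $L^2>0$. In the region where $\psi\equiv\pm A$ the bound holds trivially with $L^2\le A^2$. Making the constant $n/(n-1)$ work precisely is delicate, so we would invoke the integral version of the estimate from Cecchini/Zeidler rather than reprove it.

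\textbf{Step 2: matching the parameters.} Lemma \ref{lem part 2} requires $A\geq aL$. This is achievable because $A$ can be taken large while $L$ is fixed by $\sigma'$. The lemma then kills $\ind_q(D,\psi)$ in $K_{n-1}(C^*(\bar B_{c_1(AL)^{-1/2}}(\mathcal N_\psi)))$.

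Next we choose the $\psi$-neighbourhood $\mathcal N_\psi$ to be the closure of $\rho^{-1}((-t_0,t_0))$, where $t_0<\ell/2$ is chosen so that $\psi^{-1}((-A,A))$ is contained in it. This uses geodesics to $N$ realizing $d_g(x,N)$, which stay in the sublevel set. Taking $A$ large makes the radii $c_1(AL)^{-1/2}$ and $c_2A^{-1}$ small, so that both $\bar B_r(\mathcal N_\psi)$ stay inside $\rho^{-1}((-t,t))$ for some $t<\ell/2$.

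\textbf{Step 3: contradiction.} By Theorem \ref{thm:quantitativeIndex} and Lemma \ref{lem ind indep choices}(c), mapping to a common radius, we get $\id_*(\ind(D_N))=0$ in $K_{n-1}(C^*(N,d_r))$. We then push this forward further to $K_{n-1}(C^*(\rho^{-1}((-t,t))))$. By condition 1, $N\to\rho^{-1}((-t,t))$ is a coarse equivalence, since $N$ is a net there. Hence, by Theorem \ref{thm:coarseMap}, the composite is an isomorphism on $K$-theory, forcing $\ind(D_N)=0$, a contradiction.

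\textbf{Main obstacle.} We expect the main obstacle to be Step 1. Obtaining the sharp constant $\frac{4\pi^2(n-1)}{n\ell^2}$ needs the refined spinorial estimate, and it must be compatible with truncating $h$ at $\pm A$ while keeping the uniform bound \eqref{eq D psi pos}. A secondary nuisance is checking that the metrics $d_r$ on $N$ are dominated by the restricted distance of $\rho^{-1}((-t,t))$.
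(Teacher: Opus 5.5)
\textbf{The gap is in Step~1, which is where the sharp constant comes from, and as written it does not produce that constant.}

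The rest of your architecture is the paper's. You argue by contradiction and build $\psi=h\circ\rho_\varepsilon$ with $(D\pm i\psi)^*(D\pm i\psi)\ge L^2$, where $L$ is fixed while $A\to\infty$. You then kill $\ind_q(D,\psi)$ with Lemma~\ref{lem part 2}, identify it with $\ind(D_N)$ via Theorem~\ref{thm:quantitativeIndex} and Lemma~\ref{lem ind indep choices}(c), and conclude with condition~1. Steps~2 and~3 are essentially right. One correction there: once $\bar B_r(\mathcal N_\psi)\subset\rho^{-1}((-t,t))$, the inequalities on $N$ are $d_{\rho^{-1}((-t,t))}\le d_r\le d_{g_N}$, the opposite of the domination you list as a nuisance. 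This sandwich, together with condition~1 and the net property, is what makes $N\hookrightarrow\bar B_r(\mathcal N_\psi)$ a coarse equivalence.

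Your displayed estimate $\frac{\Sc}{4}+\psi^2-|\psi'|$ is consistent with your $\lambda=\frac{n\sqrt{\sigma'}}{2(n-1)}\cdot\frac{n-1}{n}=\frac{\sqrt{\sigma'}}{2}$. But then $\pi/\lambda=2\pi/\sqrt{\sigma'}$, which is $<\ell$ only if $\sigma'>4\pi^2/\ell^2$. Under your hypothesis $\sigma'>\frac{4\pi^2(n-1)}{n\ell^2}$ the claim $\pi/\lambda<\ell$ is false, so this choice only proves $\inf\Sc\le 4\pi^2/\ell^2$. Your Riccati inequality $\frac{n}{n-1}h^2-2|h'|+\frac{n}{n-1}\frac{\sigma'}{4}\ge0$ is not the right one either. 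Its extremal solutions blow up over an interval of length $\frac{4\pi(n-1)}{n\sqrt{\sigma'}}$, an even worse constant. Also, $\lambda\tan(\lambda s)$ violates it for large $h$ when $n\ge3$.

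The missing ingredient is elementary on a complete manifold without boundary (Lemma~\ref{lem D psi invtble}). For compactly supported $\sigma$, the Lichnerowicz formula and $|D\sigma|^2\le n|\nabla\sigma|^2$ give $\|D\sigma\|^2\ge\frac{n}{n-1}\int\frac{\Sc}{4}|\sigma|^2$. Hence
\[
\|(D\pm i\psi)\sigma\|^2\ \ge\ \int_X\Bigl(\frac{n}{4(n-1)}\Sc+\psi^2-|\nabla\psi|\Bigr)|\sigma|^2 .
\]
Equivalently, $\Sc+\frac{n}{n-1}\phi^2-2|d\phi|\ge0$ with $\phi=\frac{2(n-1)}{n}\psi$. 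So the factor $\frac{n}{n-1}$ multiplies only the curvature term. The right profile is $h(x)=r\tan(rx)$ with $\frac{\pi^2}{\ell^2}<r^2<\frac{n\sigma}{4(n-1)}$, and such an $r$ exists exactly when $\sigma>\frac{4\pi^2(n-1)}{n\ell^2}$.

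For the truncation, the paper solves $f'=\xi(x,f)$ with $0\le(1+\varepsilon)\xi\le f^2+r^2$, so it only ever lowers $f'$. The inequality therefore never fails inside the band, and the factor $1+\varepsilon$ absorbs $|\nabla\rho_\varepsilon|\le1+\varepsilon$. This gives $A_\varepsilon\ge(2\varepsilon)^{-1}\to\infty$ while $L^2=\delta/2$ stays fixed.

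Three smaller fixes in Step~1:
\begin{itemize}
\item Where $\psi\equiv\pm A$ the bound is not automatic. You need $\Sc\ge-C$ from bounded geometry and $A^2\ge L^2+\frac{n}{4(n-1)}C$.
\item Take $h$ increasing, since a defining function must have $\psi\equiv\pm A$ on the far parts of $X_\pm$.
\item Do not recentre $h$ at $a$. $N=\rho^{-1}(a)$ is already a level set of $h\circ\rho$, whereas centring the tangent at $a\ne0$ would shrink the usable width to $\ell-2|a|$.
\end{itemize}
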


Let $\kappa=\inf_{x\in X; |\rho(x)|<\ell/2}\Sc_g(x)$. 
We assume that
	\beq{eq assumpt scalar}
	\frac{\kappa n}{4(n-1)}>\frac{\pi^2}{\ell^2}
	\eeq	
and deduce a contradiction, so that 	\eqref{eq band width X} follows.

	Choose $\ell_1,\ell_2>0$ and $\delta>0$ such that
	\[
	\ell>\ell_1>\ell_2, \qquad 
	\frac{\kappa n}{4(n-1)}>\frac{\pi^2}{\ell_2^2}+\delta,
	\qquad |a|<\ell_2/2.
	\]
	By assumption, $\rho^{-1}((-\ell_1/2,\ell_1/2))$ lies in the $\ell_1$–neighborhood of $N$.
	\begin{lemma} \label{lem psi exist} 
For sufficiently small $\varepsilon>0$, there exist $A_\varepsilon \geq \frac{1}{2\varepsilon}$ and a smooth function $\psi$ on $X$ such that
$
\psi\equiv \pm A_\varepsilon
$
 on $X\setminus \rho^{-1}((-\ell_2/2,\ell_2/2))$, and
		\[
		\frac{n}{4(n-1)}\Sc_g + \psi^2 - |\nabla\psi|
		\ge \frac{\delta}{2}
		\]
on $X$.
\end{lemma}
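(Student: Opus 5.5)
The construction follows the Cecchini--Zeidler ODE method: take $\psi = h\circ\rho$, where $h$ solves a Riccati-type equation on an interval slightly shorter than $\ell_2$ and blows up at its ends. Then cap $h$ at a large value $A_\varepsilon$ and smooth it. Write $c=\frac{\pi^2}{\ell_2^2}+\delta$, so that $\frac{n}{4(n-1)}\Sc_g\ge \frac{\kappa n}{4(n-1)}> c$ on $\{|\rho|<\ell/2\}$.

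\emph{Step 1: the model function.} Let $\mu=\sqrt{c-\delta/2}$ (slightly adjusted if needed) and take
\[
h(t)=\mu\tan(\mu t).
\]
It satisfies $h'=\mu^2+h^2$, so $\mu^2+h^2-|h'|=0$, hence
\[
c+h^2-|h'|=c-\mu^2=\delta/2 .
\]
Since $\mu\,\ell_2/2<\pi/2$ would fail in general, I instead choose $\mu$ with $\mu\ell_2/2$ \emph{slightly above} $\pi/2$, which is possible because $c>\pi^2/\ell_2^2$. Then $h$ blows up to $\pm\infty$ at $t_\pm=\pm\frac{\pi}{2\mu}$, which lie strictly inside $(-\ell_2/2,\ell_2/2)$. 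The constant becomes $c-\mu^2\ge\delta/2$ if $\mu^2\le \pi^2/\ell_2^2+\delta/2$, which we arrange.

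\emph{Step 2: truncation.} Fix $A_\varepsilon\ge 1/(2\varepsilon)$ large. Let $t_\varepsilon^\pm$ be the points where $h=\pm A_\varepsilon$; they lie in $(t_-,t_+)$ and approach $t_\pm$ as $\varepsilon\to0$. Define
\[
\tilde h=\max\{-A_\varepsilon,\min\{h,A_\varepsilon\}\},
\]
extended by the constants $\pm A_\varepsilon$ outside $(t_-,t_+)$. This is Lipschitz. Where $|\tilde h|=A_\varepsilon$ we have $\tilde h'=0$ a.e., so $c+\tilde h^2-|\tilde h'|\ge c>\delta/2$.

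\emph{Step 3: smoothing.} Mollify $\tilde h$ near the two kinks by replacing $h$ by a smooth monotone $\hat h$ with $0\le\hat h'\le h'$ and $|\hat h|\ge |h|$ on the transition region. This is done by interpolating to the constant slightly beyond $t^\pm_\varepsilon$ using a cutoff of the derivative, i.e. $\hat h(t)=\int_0^t \eta(s)h'(s)\,ds$ with $0\le\eta\le1$ and $\eta=0$ near $t_\pm$. This keeps $\hat h^2-\hat h'\ge -\mu^2$ up to the small error where $|\hat h|$ slightly undershoots $|h|$. That error is compensated because there $\hat h^2\ge A_\varepsilon^2/4\gg\mu^2$. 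A cleaner route: where $\eta<1$ one already has $|\hat h|\ge A_\varepsilon/2$ and $\hat h'\le h'=\mu^2+h^2$, which needs care. So I would choose $\eta$ to switch off only where $h\in[A_\varepsilon/2,A_\varepsilon]$, and bound $\hat h'\le \hat h^2+\mu^2$ directly via the ODE comparison $\hat h\le h$. Then rescale $\hat h$ so its range is exactly $[-A_\varepsilon,A_\varepsilon]$. \textbf{This smoothing is the main technical point}, though it is routine.

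\emph{Step 4: composition.} Set $\psi=\hat h\circ\rho$, after smoothing $\rho$ to a function with $|\nabla\rho|\le1+\varepsilon'$, absorbed into $\delta$. Then $|\nabla\psi|\le|\hat h'(\rho)|$, so
\[
\frac{n}{4(n-1)}\Sc_g+\psi^2-|\nabla\psi|\ge c+\hat h^2-\hat h'\ge\delta/2
\]
on $\{|\rho|<\ell_2/2\}$. Outside this set $\psi\equiv\pm A_\varepsilon$ and $\nabla\psi=0$, so the inequality follows from $\psi^2=A_\varepsilon^2\ge\delta/2$ once $\varepsilon$ is small, even without the curvature bound.
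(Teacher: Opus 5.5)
Your overall plan (a Riccati model $\mu\tan(\mu t)$, truncated at a large height and composed with a smoothing of $\rho$) is the same as the paper's. Using the slack $\delta$ rather than $\varepsilon$ to pull the blow-up points inside $\pm\ell_2/2$ is a harmless variation. The gap is that the step you call the main technical point is not established, because the reasoning in Step 3 is inconsistent:
\begin{itemize}
\item $0\le\hat h'\le h'$ with $\hat h(0)=0$ forces $|\hat h|\le|h|$, not $\ge$.
\item The comparison $\hat h\le h$ gives $\hat h^2\le h^2$, which is the wrong direction for bounding $\hat h'=\eta(\mu^2+h^2)$ by $\mu^2+\hat h^2$.
\item The error to be controlled is $\eta h^2-\hat h^2$, which can be of order $A_\varepsilon^2$. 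So ``$\hat h^2\ge A_\varepsilon^2/4\gg\mu^2$'' does not absorb it.
\end{itemize}
Your first variant (cut-off between $t^+_\varepsilon$ and $t_+$) genuinely fails. If $\eta$ drops to $1/2$ just after $t^+_\varepsilon$ and is still $1/2$ where $h=10A_\varepsilon$, then there $\hat h\approx 5.5A_\varepsilon$ while $\hat h'\approx 50A_\varepsilon^2$. The ``cleaner route'' can be rescued, but only by a different argument. Take $\eta$ nonincreasing in $|t|$, equal to $1$ until $h=h_s:=A_\varepsilon/2$ and equal to $0$ once $h\ge 2h_s$. Monotonicity gives $\hat h\ge h_s+\eta(h-h_s)$, and
\[
\bigl(h_s+\eta(h-h_s)\bigr)^2-\eta h^2=(1-\eta)\bigl(h_s^2-\eta(h-h_s)^2\bigr)\ge 0\qquad (h_s\le h\le 2h_s),
\]
hence $\hat h'=\eta(\mu^2+h^2)\le\mu^2+\hat h^2$. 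Do not then multiply $\hat h$ by a constant $\lambda>1$: at $t=0$ the quantity becomes $c-\lambda\mu^2$, which can fall below $\delta/2$. Just set $A_\varepsilon:=\sup\hat h$. The paper avoids all of this by cutting off the right-hand side of the ODE rather than the derivative of the explicit solution. It solves $f'=\xi(x,f)$ with $(1+\varepsilon)\xi(x,y)\le y^2+r^2$, so $(1+\varepsilon)f'\le f^2+r^2$ holds by construction.

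Step 4 has two further problems.
\begin{itemize}
\item Outside $\{|\rho|<\ell_2/2\}$ the inequality does \emph{not} hold ``without the curvature bound''. The hypothesis controls $\Sc_g$ only on $\{|\rho|<\ell/2\}$, and on the rest of $X$ the scalar curvature may be negative. You need the uniform bound $\Sc_g\ge -C$ from bounded geometry and $A_\varepsilon^2\ge\frac{n}{4(n-1)}C+\frac{\delta}{2}$, exactly as in the paper.
\item The factor $1+\varepsilon'$ from $|\nabla\rho_{\varepsilon'}|$ cannot simply be ``absorbed into $\delta$''. The term $(1+\varepsilon')\hat h'$ exceeds $\hat h'$ by up to $\varepsilon'(\mu^2+A_\varepsilon^2)$. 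So you must choose $\varepsilon'\ll A_\varepsilon^{-2}$ \emph{after} fixing $A_\varepsilon$, and keep a strict slack $\mu^2<\pi^2/\ell_2^2+\delta/2$ (you only arranged $\le$).
\end{itemize}
This ordering is available in your setup, since your cut-off points sit a fixed distance inside $\pm\ell_2/2$. The paper ties its smoothing parameter to $\varepsilon$ while $A_\varepsilon\sim 1/\varepsilon$, so it cannot do this and instead builds the factor $1+\varepsilon$ into the ODE.
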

\begin{proof}
Set
	\[
	r=\frac{\pi(1+\varepsilon)}{\ell_2-2\varepsilon}.
	\]
	Choose a smooth non-negative function 
	\(\xi:\mathbb R^2\to\mathbb R\)
	satisfying the following assumptions for all $(x,y) \in \R^2$:
	\begin{align}
		 0&\le (1+\varepsilon)\xi(x,y)\le y^2+r^2;\label{eq ass xi 1}\\
		 \xi(-x,y)&=\xi(x,y);\nonumber\\
		\xi(x,y)&=
		\begin{cases}
			\dfrac{y^2+r^2}{1+\varepsilon}, & |x|\le\ell_2/2-2\varepsilon,\\[0.4em]
			0, & |x|\ge \ell_2/2-\varepsilon.
		\end{cases}\nonumber
	\end{align}

	Let $f$ be the unique solution to the following differential equation with initial condition:
	\begin{equation}\label{eq:ode}
		\begin{cases}
			f'(x)=\xi(x,f(x)),\\
			f(0)=0.
		\end{cases}
	\end{equation}
	The solution always exists in a small neighborhood of $x=0$. 
	By the comparison theorem and the assumption \eqref{eq ass xi 1}, we have for all $x \in \bigl(-(l_2/2 - \varepsilon), l_2/2 - \varepsilon \bigr)$,
	\begin{equation}
		f(x)\leq r\tan\left(\frac{r x}{1+\varepsilon}\right),
	\end{equation}
	since the function $h(x)= r\tan(\frac{r x}{1+\varepsilon})$ is the unique solution to the differential equation 
	$$\begin{cases}
		h'(x)=\frac{h(x)^2+r^2}{1+\varepsilon},\\
		h(0)=0.
	\end{cases}$$
	Therefore, the solution $f$ exists at least when $|x|<\ell_2/2 - \varepsilon$. In particular, if $|x|\leq \ell_2/2-2\varepsilon$, then $f(x)=r\tan(\frac{rx}{1+\varepsilon})$, and 
	\beq{eq value f}
	f(\ell_2/2-2\varepsilon)=\frac{\pi(1+\varepsilon)}{\ell_2-2\varepsilon}\tan\left(\frac{\pi}{2}\cdot\frac{\ell_2/2-2\varepsilon}{\ell_2/2-\varepsilon}\right).
	\eeq
    Since by construction $f'(x)=0$ if $|x|\geq \ell_2/2-\varepsilon$, we find that $f$ is defined for all $x\in\R$. In particular, $f(x)=\pm A_\varepsilon$ when $|x|\geq \ell_2/2-\varepsilon$, where, by \eqref{eq value f} and nonnegativity of $f'$,
   \beq{eq A eps}A_\varepsilon\geq \frac{\pi(1+\varepsilon)}{\ell_2-2\varepsilon}\tan\left(\frac{\pi}{2}\cdot\frac{\ell_2/2-2\varepsilon}{\ell_2/2-\varepsilon}\right).
   \eeq
   Using the fact that for all $a>0$, we have 
\[	
	\tan\left(\frac{\pi}{2}\frac{a-2\varepsilon}{a-\varepsilon} \right) = \frac{2a}{\pi \varepsilon} + O(1) 
\]	
as $\varepsilon\downarrow 0$, we find that the right hand side of \eqref{eq A eps} is $\frac{1}{\varepsilon} + O(1)$.
    So $A_{\varepsilon}> \frac{1}{2\varepsilon}$, if $\varepsilon>0$ is small enough.

Let $\rho_{\varepsilon}$ be a smooth function on $X$ such that
\begin{align}
\|\rho - \rho_{\varepsilon}\|_{\infty} &< {\varepsilon};\label{eq rho eps 1}\\
\|\nabla \rho_{\varepsilon}\|_{\infty}&<(1+ \varepsilon).\label{eq rho eps 2} 
\end{align}
Define $\psi(p)=f(\rho_{\varepsilon}(p))$ for all $p\in X$.
	\begin{itemize}
		\item If $p\in\rho^{-1}((-\ell_2/2,\ell_2/2))$, then $\Sc_{g}(p)\geq\kappa>0$. Therefore, by \eqref{eq rho eps 2}, 
        \begin{align*}
            \left(\frac{ n}{4(n-1)}\Sc_{ g}+\psi^2-|\nabla\psi| \right)(p)
            &\geq\frac{\kappa n}{4(n-1)}+f(\rho_{\varepsilon}(p))^2-(1+\varepsilon)f'(\rho_{\varepsilon}(p))\\
            &=\frac{\kappa n}{4(n-1)}+f(\rho_{\varepsilon}(p))^2-(1+\varepsilon)\xi(\rho_{\varepsilon}(p),f(\rho_{\varepsilon}(p)))\\
           & \geq 
           \frac{\kappa n}{4(n-1)}-{r^2}\\
            &=\frac{\kappa n}{4(n-1)}-\frac{\pi^2(1+\varepsilon)^2}{(\ell_2-2\varepsilon)^2},
        \end{align*}
        where we used \eqref{eq ass xi 1} in the third line.
The right hand side is greater than or equal to $\delta/2$ when $\varepsilon$ is small enough.
		\item If $p\notin\rho^{-1}((-\ell_2/2,\ell_2/2))$, then 
		 $|\rho_{\varepsilon}(p)|>\ell_2/2-\varepsilon$ by \eqref{eq rho eps 1}. So $f(\rho_{\varepsilon}(p))=\pm A_\varepsilon$ and $\nabla\psi(p)=0$. 
	As $X$ has bounded geometry, we may assume $\Sc_g\ge -C$ on $X$, for some $C>0$.		 
		 Then
		$$\left(\frac{ n}{4(n-1)}\Sc_{ g}+\psi^2-|\nabla\psi|\right)(p)\geq 
        \frac{ n}{4(n-1)}(-C)+A_\varepsilon^2,$$
        which is greater than or equal to $\delta/2$ when $\varepsilon$ is small enough.
	\end{itemize}
\end{proof}

Let $D$ be the Dirac operator on $X$, and let $\varepsilon$ and $\psi$ be as in Lemma \ref{lem psi exist}. 
\begin{lemma}\label{lem D psi invtble}
We have
\beq{eq B invtble}
(D\pm i \psi)^*(D \pm i \psi) \geq \frac{\delta}{2}.
\eeq
\end{lemma}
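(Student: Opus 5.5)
The plan is a Lichnerowicz-type computation combined with a refined Kato/Penrose-operator inequality, which is where the factor $\frac{n}{4(n-1)}$ comes from. First expand
\[
(D\pm i\psi)^*(D\pm i\psi) = (D\mp i\psi)(D\pm i\psi) = D^2 + \psi^2 \pm i[D,\psi] = D^2+\psi^2 \pm i\,c(\nabla\psi).
\]
Here I use that $\psi$ is real valued and bounded, so that multiplication by $\psi$ is self-adjoint, and that $[D,\psi]=c(\nabla\psi)$. In the odd-dimensional case this is the expression for the diagonal blocks of $B^2$; in the even case the analogous computation gives $B^2=D^2+\psi^2+\gamma c(\nabla\psi)$ up to sign conventions. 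Since $|c(\nabla\psi)|\le|\nabla\psi|$ pointwise, for compactly supported smooth sections $u$ of $S(TX)$ we get
\[
\|(D\pm i\psi)u\|^2 \ge \|Du\|^2 + \int_X (\psi^2-|\nabla\psi|)\,|u|^2 .
\]

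The key step is then to bound $\|Du\|^2$ from below by $\int_X \frac{n}{4(n-1)}\Sc_g|u|^2$, and not merely by $\frac14\int_X\Sc_g|u|^2$, which is all the plain Lichnerowicz formula $D^2=\nabla^*\nabla+\frac{\Sc_g}{4}$ gives. For this I would use the Friedrich/twistor decomposition $|\nabla u|^2 \ge \frac1n|Du|^2$, which is the pointwise inequality coming from the Penrose operator. This yields
\[
\|Du\|^2=\|\nabla u\|^2+\int_X\tfrac{\Sc_g}{4}|u|^2 \ge \tfrac1n\|Du\|^2+\int_X\tfrac{\Sc_g}{4}|u|^2,
\]
and rearranging gives $\|Du\|^2\ge \frac{n}{4(n-1)}\int_X \Sc_g|u|^2$.

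Substituting into the previous estimate gives
\[
\|(D\pm i\psi)u\|^2\ge\int_X\Big(\tfrac{n}{4(n-1)}\Sc_g+\psi^2-|\nabla\psi|\Big)|u|^2\ge\tfrac{\delta}{2}\|u\|^2
\]
by Lemma \ref{lem psi exist}. By completeness, $D\pm i\psi$ is essentially self-adjoint up to a bounded perturbation, and compactly supported sections form a core. The inequality therefore extends to the domain, which gives \eqref{eq B invtble}.

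The main obstacle is the cross term. It is cleaner to keep it inside $\|Du\|^2$ rather than discard it. I would write $\|(D\pm i\psi)u\|^2=\|Du\|^2+\|\psi u\|^2\pm 2\Real\langle Du,i\psi u\rangle$ and check that the real part equals $\pm\langle i c(\nabla\psi)u,u\rangle$ up to sign. This requires $\psi$ to be real, and it uses that $D$ and $i\psi$ anticommute up to the Clifford term in the $2\times2$ matrix structure of \eqref{eq B odd}. The even case with $\gamma$ needs the same check.
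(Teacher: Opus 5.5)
Your proof is correct and follows the paper's argument. You expand $\|(D\pm i\psi)u\|^2=\|Du\|^2+\|\psi u\|^2\pm\langle i c(\nabla\psi)u,u\rangle$ and bound the Clifford term by $|\nabla\psi|\,|u|^2$. You then combine the Lichnerowicz formula with the pointwise inequality $|Du|^2\le n|\nabla u|^2$ (the paper gets this by Cauchy--Schwarz rather than via the Penrose operator) to obtain $\|Du\|^2\ge\frac{n}{4(n-1)}\int_X\Sc_g|u|^2$, and conclude with Lemma~\ref{lem psi exist}. Your version is slightly more careful than the paper's in two places: you bound the cross term by $|\nabla\psi|$, where the paper's display has $|\nabla\psi|^2$ (a typo), and you explicitly extend the estimate from compactly supported sections to the operator domain via the core argument.
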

\begin{proof}
 For any compactly supported smooth spinor $\sigma$ on $X$, we have
\begin{align*}
\int_X|(D\pm i\psi)\sigma|^2 d\vol_g &=\int_X \bigl( |D\sigma|^2+\psi^2|\sigma|^2\pm\langle i(D\psi-\psi D)\sigma,\sigma\rangle \bigr)d\vol_g\\
&=\int_X \bigl(  |D\sigma|^2+\psi^2|\sigma|^2\pm \langle ic(\nabla\psi)\sigma,\sigma\rangle \bigr) d\vol_g\\
&\geq \bigl( \int_X |D\sigma|^2+\psi^2|\sigma|^2-|\nabla\psi|^2|\sigma|^2 \bigr) d\vol_g.
\end{align*}
By the Lichnerowicz formula, we have
$$\int_X|D\sigma|^2 d\vol_g =\int_X \bigl( |\nabla\sigma|^2+\frac{\Sc_g}{4}|\sigma|^2\bigr) d\vol_g,$$
and by the Cauchy--Schwarz inequality, we have
$$|D\sigma|^2=|\sum_{j=1}^n c(e_j)\nabla_{e_j}\sigma|^2\leq n|\nabla\sigma|^2.$$
Therefore,
$$\int_X|D\sigma|^2 d\vol_g \geq \frac 1 n\int_X|D\sigma|^2 d\vol_g+\int_X \frac{\Sc_g}{4}|\sigma|^2 d\vol_g.$$
Hence
$$\int_X|D\sigma|^2 d\vol_g \geq  \frac{n}{n-1}\int_X\frac{\Sc_g}{4}|\sigma|^2 d\vol_g.$$
By these computations and Lemma \ref{lem psi exist}, 
\[
\begin{split}
\int_X|(D\pm i\psi)\sigma|^2 d \vol_g &\geq\int_X \left(\frac{n}{n-1}\cdot\frac{\Sc_g}{4}
+\psi^2-|\nabla\psi|
\right)|\sigma|^2 d \vol_g\\
&\geq\frac{\delta}{2}\int_X|\sigma|^2 d \vol_g.
\end{split}
\]
Hence \eqref{eq B invtble} follows.
\end{proof}

\begin{proof}[Proof of Theorem \ref{thm noncpt bandwidth}]
Let 
\beq{eq choice N psi}
\mathcal N_\psi\coloneqq \overline{ \rho^{-1}((-\ell_2/2,\ell_2/2))}, 
\eeq
which is independent of $\varepsilon$.
Applying Definition \ref{def quant index odd} or \ref{def quant index even},  with $\delta$ as in Proposition \ref{prop quasi proj} or \ref{prop tilde Q Qb}, and $b$ and $r$ as in Remark \ref{rem O A-1}, we obtain
\[
\ind_q(D,\psi)\in K_{n-1}(C^*(\bar B_{(\delta A_\varepsilon)^{-1}}(\mathcal N_\psi))).
\]
 Theorem~\ref{thm:quantitativeIndex} shows that 
 \beq{eq qPMIT band}
 \ind_q(D,\psi) = \id_*(\ind(D_N)) \in K_{n-1}(C^*(\bar B_{(\delta A_\varepsilon)^{-1}}(\mathcal N_\psi))).  
 \eeq

%
%


By Lemmas \ref{lem part 2} and  \ref{lem D psi invtble}, we have for $A_{\varepsilon} \geq a \sqrt{\delta/2}$, 
\beq{eq ind q D 2}
 \ind_q(D,\psi)=0\in K_{n-1}(C^*(\bar B_{2^{1/4}c_1\delta^{-1/4} A_\varepsilon^{-1/2}}(\mathcal N_\psi))).
 \eeq
As $\delta$ is  independent of $\varepsilon$, we have
\[
2^{1/4}c_1\delta^{-1/4} A_\varepsilon^{-1/2} \geq (\delta A_\varepsilon)^{-1}
\]
if $A_{\varepsilon}$ is large enough, i.e.\ when $\varepsilon$ is small enough. Choose $\varepsilon$ in this way, and consider the inclusion map
\[
i_{\bar B_{(\delta A_\varepsilon)^{-1}}(\mathcal N_\psi)}
\colon \bar B_{(\delta A_\varepsilon)^{-1}}(\mathcal N_\psi) \to \bar B_{2^{1/4}c_1\delta^{-1/4} A_\varepsilon^{-1/2}}(\mathcal N_\psi).
\]
By part (c) of Lemma \ref{lem ind indep choices}, \eqref{eq qPMIT band} and \eqref{eq ind q D 2},
\begin{multline}\label{eq index vanishes}
(i_{\bar B_{(\delta A_\varepsilon)^{-1}}(\mathcal N_\psi)})_* \circ \id_*(\ind(D_N))=  \ind_q(D,\psi)=0\\
\in K_{n-1}(C^*(\bar B_{2^{1/4}c_1\delta^{-1/4} A_\varepsilon^{-1/2}}(\mathcal N_\psi))).
\end{multline}
 Now 
 \beq{eq incl compos}
 i_{\bar B_{(\delta A_\varepsilon)^{-1}}(\mathcal N_\psi)} \circ \id
 \eeq
 is the inclusion of $N$ into $\bar B_{2^{1/4}c_1\delta^{-1/4} A_\varepsilon^{-1/2}}(\mathcal N_\psi)$. For $\varepsilon$ small enough, the latter set is contained in $\rho^{-1}((-\ell_1/2,\ell_1/2))$. Hence \eqref{eq incl compos} is a coarse equivalence by the first assumption in Theorem \ref{thm noncpt bandwidth}. So the induced map on $K$-theory is an isoorphism by Theorem \ref{thm:coarseMap}. Therfeore, \eqref{eq index vanishes} implies that $\ind(D_N) = 0$, contradicting the second assumption in Theorem \ref{thm noncpt bandwidth}.
We conclude that  \eqref{eq assumpt scalar} is not true.
\end{proof}

\begin{proof}[Proof of Theorem \ref{thm:bandwidthForBand}]
    By Theorem \ref{thm:uniformTubular}, there is a uniform geodesic normal neighborhood $U\cong [0,\varepsilon)\times \partial_- M$ of $\partial_- M$ in $M$. Let $\varepsilon_1>0$ be smaller than $\varepsilon$ and $\ell/2$, and set $N=\{\varepsilon_1\}\times\partial_- M$. 
    Let $(X,g)$ be the complete manifold as in Definition \ref{def:band}. Then $X$ decomposes as
    $$X=X_1\cup_{\partial_- M} M\cup_{\partial_+ M} X_2,$$
    which is partitioned by $N$ into $X_\pm$.

    Let the function $\rho$ be as in Theorem \ref{thm noncpt bandwidth}, for $a = \frac{\ell}{2}+ \varepsilon_1$, and with $\ell$ in Theorem \ref{thm noncpt bandwidth} replaced by $\ell-\varepsilon_1$, for $\ell$ as in the current situation.
     The first condition in Theorem \ref{thm noncpt bandwidth} holds by the first condition in Theorem \ref{thm:bandwidthForBand}. 
      And the second condition  in Theorem \ref{thm noncpt bandwidth} holds by the second condition in Theorem \ref{thm:bandwidthForBand} and homotopy invariance of the coarse index. (This homotopy invariance is well-known, but one can also apply the more recent cobordism invariance result in  \cite{Wulff12}, or the later versions in \cite[Theorem 4.12]{Wulff19} or \cite[Corollary 2.27]{HochsdeKok25}.) Theorem \ref{thm noncpt bandwidth} therefore implies that
 \beq{eq rho eps}
 \inf_{|\rho(x)|<(\ell-\varepsilon_1)/2} \Sc_g(x) \leq \frac{4\pi^2(n-1)}{n (\ell-\varepsilon_1)^2}.
 \eeq   
 We claim that 
 \beq{eq rho M}
 \rho^{-1}\left( -\left(\frac{\ell - \varepsilon_1}{2} \right), \frac{\ell - \varepsilon_1}{2}  \right) \subset M.
    \eeq
 As    \eqref{eq rho eps} holds for arbitrarily small $\varepsilon_1$, we then obtain
 \eqref{eq band width ineq}.  
  
  Let $x \in X$, with $|\rho(x)|<\frac{\ell - \varepsilon_1}{2}$. Suppose first that $x \in X_+$. Then
  \[
  \rho(x) = \frac{\ell}{2}+ \varepsilon_1+ d_g(x, N),
  \]
  so
  \[
  d_g(x, N) < \ell - \frac{3\varepsilon_1}{2},
  \]
  which implies that $x \in M$. Similarly, if  $x \in X_-$, then we have
  \[
  \rho(x) = \frac{\ell}{2}- \varepsilon_1+ d_g(x, N),
  \]
  so
  \[
  d_g(x, N) < \frac{\varepsilon_1}{2},
  \]
and again $x \in M$. So \eqref{eq rho M}, and hence Theorem \ref{thm:bandwidthForBand}, follows.
   \end{proof}

\begin{remark}\label{rem motivation quant index}
In the setting of Theorem \ref{thm:bandwidthForBand}, with $X$ as in Definition \ref{def:band}, condition \eqref{eq cond N equiv} in Theorem \ref{thm:recallPartitionIndex} may not hold. For example, in Example \ref{ex quant index thm}, the hypersurface $\rho^{-1}(-1)$ does not have this property. In such cases, Theorem \ref{thm:recallPartitionIndex} does not apply. But condition 1 in Theorem \ref{thm:bandwidthForBand} does imply condition 1 in Theorem \ref{thm noncpt bandwidth}, for small enough $t$, where $N$ is as in the proof of  Theorem \ref{thm:bandwidthForBand}. This allows us to use Theorem \ref{thm noncpt bandwidth}, which is based on Theorem \ref{thm:quantitativeIndex},  to prove Theorem \ref{thm:bandwidthForBand}. This is our motivation for generalising Theorem \ref{thm:recallPartitionIndex} to Theorem \ref{thm:quantitativeIndex}. 
\end{remark}

\subsection{Necessity of condition 1}\label{sec cond 1 necessary}

We end this paper by discussing an example showing that condition 1 in Theorem~\ref{thm noncpt bandwidth} and Theorem~\ref{thm:bandwidthForBand} is indeed necessary. We start by recalling a proposition about positive scalar curvature metrics on connected sums, which is essentially due to Gromov--Lawson \cite[Proposition 5.1]{GromovLawson80} and Schoen--Yau \cite[Corollary 7]{SchoenYau79}. The following quantitative version is  \cite[Proposition 4.2]{Sweeney}.
\begin{proposition}\label{prop:connectedSum}
    Let $(M^n,g)$ be a Riemannian manifold with $n\geq 3$ and $p,p'\in M$. Assume that $\Sc_g\geq\kappa$. Then for any $d,\delta,\varepsilon>0$ there exists a Riemannian manifold $(P^n,g')$, which one constructs by removing two $\delta$-balls near $p$ and $p'$ and attaching a cylinder $T\cong S^{n-1}\times[0,1]$, namely
    $$P=(M\backslash (B_\delta(p)\cup B_\delta(p',))\cup T,$$
    such that
    \begin{enumerate}
        \item $\Sc_{g'}\geq \kappa-\varepsilon,$
        \item $g$ and $g'$ coincide on the $[\delta/2,\delta]$-annulus aroud $p$ and $p'$, and
        \item the diameter of $T$ is $\leq C(d+\delta)$, where $C$ is a universal constant.
    \end{enumerate}
\end{proposition}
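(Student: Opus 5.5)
The plan is to follow the Gromov--Lawson bending construction \cite[Proposition 5.1]{GromovLawson80}, keeping track of the lengths involved, as in \cite[Proposition 4.2]{Sweeney}. The proof has three steps: build a neck at $p$, build the same neck at $p'$, and join the two necks by a round cylinder.

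\textbf{Step 1: the bent hypersurface near $p$.} Fix $p$ and work in geodesic normal coordinates on $B_\delta(p)$. If necessary, first shrink to a scale $\delta_0\le\delta/2$, chosen small compared with the injectivity radius and the curvature bounds near $p$ and $p'$. The surgery then takes place inside $B_{\delta/2}(p)$, so condition 2 holds automatically.

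Consider $M\times\R$ with the product metric, and a curve $\gamma$ in the $(r,t)$ quarter-plane, where $r=d_g(\cdot,p)$. The curve starts along the segment $t=0$ for $r\ge\delta_0$. It then bends upward and ends as a vertical ray $r=r_0$ for some small $r_0>0$. Let $\Sigma_\gamma=\{(x,t):(d_g(x,p),t)\in\gamma\}$.

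The Gauss equation gives the scalar curvature of $\Sigma_\gamma$ as
\[
\Sc_{\Sigma}=\Sc_g - 2\,\mathrm{Ric}(\nu,\nu)\sin^2\theta + (n-1)(n-2)\frac{\sin^2\theta}{r^2}(1+O(r^2)) + 2(n-1)k\frac{\sin\theta}{r}(1+O(r^2)).
\]
Here $\theta$ is the angle of $\gamma$ with the $r$-axis and $k$ is the curvature of $\gamma$.

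For $n\ge3$, the term $(n-1)(n-2)\sin^2\theta/r^2$ dominates $\mathrm{Ric}(\nu,\nu)\sin^2\theta$ once $r$ is small. Gromov--Lawson's choice of bending, with $k$ small while $\theta$ is small and $r$ is kept bounded below until $\theta$ is near $\pi/2$, keeps the $k$-term controlled. This gives $\Sc_\Sigma\ge\kappa-\varepsilon$ along the whole curve.

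The length of the bending part of $\gamma$ has to be bounded by a universal multiple of $\delta$. I expect to get this by rescaling: on $B_{\delta_0}(p)$ the rescaled metric $\delta_0^{-2}g$ is $C^2$-close to Euclidean, and in the Euclidean model the bending curve has length comparable to its horizontal extent.

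\textbf{Step 2: straightening the neck.} Along the vertical ray the induced metric is $dt^2+g_{S(r_0)}$, where $g_{S(r_0)}$ is the metric on the geodesic sphere of radius $r_0$. This is a small perturbation of $dt^2+r_0^2g_{S^{n-1}}$, and its scalar curvature is about $(n-1)(n-2)/r_0^2$, which is much larger than $\kappa$.

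Over a $t$-interval of length $O(r_0)$, I will interpolate linearly to the exact round product $dt^2+r_0^2g_{S^{n-1}}$. The perturbation is $O(r_0^2)$ in $C^2$ after rescaling, so the scalar curvature stays above $\kappa-\varepsilon$.

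\textbf{Step 3: joining the necks and bounding the diameter.} Do the same construction at $p'$ with the same $r_0$. Join the two round ends by a round cylinder $S^{n-1}(r_0)\times[0,d]$, which has scalar curvature $(n-1)(n-2)/r_0^2\ge\kappa$. This gives $P$ and $g'$, with $g'=g$ on the annuli by construction.

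For the diameter of $T$, add up the pieces:
\begin{itemize}
\item the two bending regions, each of length and width $O(\delta)$ by Step 1;
\item the two straightening collars, each of length $O(r_0)\le O(\delta)$;
\item the middle cylinder, of length $d$ and cross-section diameter $\pi r_0$.
\end{itemize}
Together this gives $\operatorname{diam}(T)\le C(d+\delta)$.

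\textbf{Main obstacle.} The hard step is the quantitative bending in Step 1. The curve must keep $\Sc_\Sigma\ge\kappa-\varepsilon$ and at the same time have length $O(\delta)$ with a universal constant. I would first do the construction in the exact Euclidean model, where the estimate is explicit and scale-invariant. I would then transfer it to $g$ using the $C^2$-closeness of $\delta_0^{-2}g$ to the flat metric, absorbing the errors into $\varepsilon$.
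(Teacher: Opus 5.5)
The paper gives no proof of this statement; it quotes it from \cite[Proposition 4.2]{Sweeney}, which is the Gromov--Lawson tunnel with lengths tracked. Your outline (bend in $M\times\R$, straighten to a round product, join by a round cylinder of length $d$) is that construction, and Steps 2 and 3 and the final diameter count are fine. The gap is in Step 1. It carries the only genuinely quantitative content, namely the universal constant $C$. The mechanism you propose for it does not work as stated: you build the curve once in the Euclidean model, call that estimate scale-invariant, then transfer it to $g$ and absorb the errors into $\varepsilon$.

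\textbf{Why the Euclidean model is not scale-invariant.} Take $k=d\theta/ds>0$, the profile turning from horizontal to vertical. The principal curvatures are then $-k$ along the profile and $\sin\theta/r$ on the spheres. So the model term is $E=(n-1)\frac{\sin\theta}{r}\bigl((n-2)\frac{\sin\theta}{r}-2k\bigr)$, and the $k$-part enters with a minus sign, unlike in your formula.

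\textbf{Unavoidable loss.} Requiring $E\ge 0$ forces $\theta'\le\frac{n-2}{2r}\sin\theta$, hence $\theta\equiv 0$ by Gr\"onwall. So every model curve that leaves the plane has some loss $\eta>0$. Rescaling a fixed model curve to size $\delta_0$ turns this into a loss $\eta/\delta_0^{2}$. What you need is a family of model curves with loss at most $\delta_0^2\varepsilon$, whose length is bounded \emph{uniformly in that budget}.

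\textbf{The Ricci term.} The term $-2\mathrm{Ric}(\partial_r,\partial_r)\sin^2\theta$ is of order one wherever $\theta$ is not small, so it cannot go into $\varepsilon$. It must be absorbed by a margin kept in $(n-1)(n-2)\sin^2\theta/r^2$. In other words, the curve must not spend that whole term on bending.

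\textbf{How to close the gap.} Both points are easy to arrange, and the length bound then comes from monotonicity. Along the profile $r$ decreases and $t$ increases, so the length is at most $\Delta r+\Delta t$, with $\Delta r\le\delta_0$. Build the curve in two stages.

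First stage: bend to a tiny angle $\theta_0\lesssim\delta_0\sqrt{\varepsilon/n}$ with constant curvature $k_0$.
\begin{itemize}
\item Choose $k_0$ so that $2(n-1)k_0\sin\theta_0/r\le\varepsilon/2$ on $r\in[\delta_0/2,\delta_0]$. The arc length $\theta_0/k_0$ is then at most $\delta_0/2$.
\item Take $\theta_0$ small enough that $2|\mathrm{Ric}|\sin^2\theta_0\le\varepsilon/2$.
\end{itemize}

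Second stage: bend by the law $k=c\sin\theta/r$ with $c=(n-2)/4$.
\begin{itemize}
\item This keeps $E\ge\frac{(n-1)(n-2)}{2}\frac{\sin^2\theta}{r^2}$, which swallows the Ricci term and the $O(r^2)$ errors for small $r$.
\item It gives $r=r_1(\sin\theta_0/\sin\theta)^{1/c}>0$, so the curve becomes vertical at the positive radius $r_1(\sin\theta_0)^{1/c}$.
\item Since $dt/d\theta=r/c$, the vertical rise is at most $\pi r_1/(2c)\le 2\pi\delta_0$.
\end{itemize}

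This yields the $O(\delta_0)$ bound on the length and cross-sectional diameter of the bending region, with a constant independent of $n$, $\varepsilon$ and $g$. That is exactly what your diameter count needs.
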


Now
	fix $n\ge 3$ and $\delta>0$ a small positive number.	
	Consider the warped product metric
	\[
	g_\varphi = dt^2 + \varphi(t)^2 g_{\mathrm{eu}}
	\]
	on $[-\delta,\delta]\times\mathbb R^{\,n-1}$. Its scalar curvature is
	\[
	\Sc_{g_\varphi}
	= -\frac{(n-1)(n-2)\varphi'^2}{\varphi^2} - \frac{2(n-1)\varphi''}{\varphi}.
	\]

    Take 
    $$\varphi(t)=\left(\cos\bigl(\frac{nt}{2}\bigr)\right)^{2/n},$$ 
    and we obtain
	\[
	\Sc_{g_\varphi}=n(n-1)>0.
	\]
	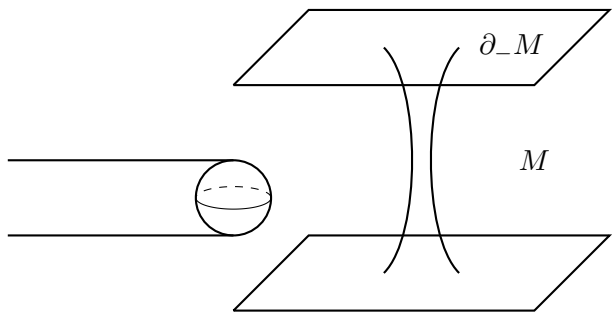
\begin{figure}[h]
    \centering
    \begin{tikzpicture}
        \draw[black,thick] (0,0) -- (4,0) -- (5,1) -- (1,1) -- (0,0);
        \draw[black,thick] (0,3) -- (4,3) -- (5,4) -- (1,4) -- (0,3);
        \draw[black,thick] (2,0.5) .. controls (2.5,1) and (2.5,3).. (2,3.5);
        \draw[black,thick] (3,0.5) .. controls (2.5,1) and (2.5,3).. (3,3.5);
        \draw[black,thick] (0,1) arc(-90:270:0.5);
        \draw[smooth, samples=100, domain=0:180,dashed] plot({0.5*cos(\x)},{1.5+0.15*sin(\x)});
        \draw[smooth, samples=100, domain=180:360] plot({0.5*cos(\x)},{1.5+0.15*sin(\x)});
        \draw[black,thick] (0,1)-- (-3,1);
         \draw[black,thick] (0,2)-- (-3,2);
         \draw (4,2) node {$M$};
         \draw (3.7,3.5) node {$\partial_- M$};
    \end{tikzpicture}
    \caption{Band with infinite length, positive scalar curvature, and non-zero index}
    \label{fig}
\end{figure}

	Let $O\in\mathbb R^{n-1}$ be the origin. Choose a small tube (homeomorphic to $[-\delta,\delta]\times S^{n-2}$) near $[-\delta,\delta]\times\{O\}$ which smoothly joins the slices $\{\pm\delta\}\times\mathbb R^{n-1}$. 
	Let the open subset $M_1 \subset [-\delta, \delta] \times \R^{n-1}$ be the exterior of this tube. We view it as a Riemannian manifold, with the restriction of $g_{\varphi}$. 
Its boundary is
	\[
	N\coloneqq \partial M_1
	= (\{\delta\}\times\mathbb R^{n-1})\#(\{-\delta\}\times\mathbb R^{n-1})
	\]
	is the connected sum of two copies of $\mathbb R^{n-1}$. (See Figure~\ref{fig}.)
	
	Pick a point $p=(0,p_1)\in M_1$ away from the tube and, for sufficiently small $r>0$, denote by $B_r(p)$ the geodesic $r$-ball about $p$ with boundary $S_r(p)$. Let $Y$ be the manifold by removing $B_r(p)$ from $M_1$ and attaching a cylinder 
		\[
	Y \coloneqq (S^{n-1}\times(-\infty,0])\cup_{S_r(p)}(M_1\setminus B_r(p)),
	\]
	By Proposition~\ref{prop:connectedSum}, there is a complete Riemannian metric $g$ on $Y$ with bounded geometry, which equals $g_\varphi$ away from $p$ inside $M_1$, equals the product metric $dt^2+g_{S^{n-1}}$ on the end $S^{n-1}\times(-\infty,0]$ away from the glued region, and satisfies 
	$$\Sc_g\ge n(n-1)-1>0.$$
	
	For any $R\gg 0$, define the band
	\[
	M = (S^{n-1}\times[-R,0])\cup_{S_\delta(p)}(M_1\setminus B_\delta(p))
	\]
	equipped with the induced metric $g$. Then $M$ is a regular $\Spin$ Riemannian band with
	\[
	\partial_-M = \mathbb R^{n-1}\#\mathbb R^{n-1}, \qquad \partial_+M = \{-R\} \times S^{n-1},
	\]
	and $\Sc_g\ge n(n-1)-1>0$ on $M$. The coarse index of the Dirac operator on $\partial_-M=\mathbb R^{n-1}\#\mathbb R^{n-1}$ is non-zero: under the identification
	\[
	K_{n-1}\bigl(C^*(\mathbb R^{n-1}\#\mathbb R^{n-1})\bigr)\cong \mathbb Z\oplus\mathbb Z
	\]
	the index equals $(1,1)\neq 0$. One can see this by pairing the Dirac operator with the almost flat Bott classes of the two copies of $\R^{n-1}$ independently, using representatives that are constant near the tube between them.
	
	Nevertheless, by taking $R$ arbitrarily large, the distance between $\partial_-M$ and $\partial_+M$ can be made arbitrarily long while maintaining positive scalar curvature lower bound and non-zero index. In this example, all but the hypothesis 1 of Theorem \ref{thm noncpt bandwidth} and Theorem \ref{thm:bandwidthForBand} are satisfied, that is, the identity map
	\[
	\id \colon (\partial_-M,d_{g_{\partial_-M}})\longrightarrow (\partial_-M,d_M)
	\]
	is not a coarse equivalence. (Indeed, $ (\partial_-M,d_M)$ is coarsely equivalent to $\R^{n-1}$, which is not coarsely equivalent to $\R^{n-1} \# \R^{n-1}$.)

\bibliography{eta}

\begin{thebibliography}{10}

\bibitem{MR715325}
S.~Baaj and P.~Julg.
\newblock Th\'eorie bivariante de {K}asparov et op\'erateurs non born\'es dans
  les {$C\sp{\ast} $}-modules hilbertiens.
\newblock {\em C. R. Acad. Sci. Paris S\'er. I Math.}, 296(21):875--878, 1983.

\bibitem{Blackadar}
B.~Blackadar.
\newblock {\em {$K$}-theory for operator algebras}, volume~5 of {\em
  Mathematical Sciences Research Institute Publications}.
\newblock Cambridge University Press, Cambridge, second edition, 1998.

\bibitem{bunke2024coronas}
U.~Bunke and M.~Ludewig.
\newblock Coronas and {C}allias type operators in coarse geometry.
\newblock arXiv:2411.01646, 2024.

\bibitem{Cecchini20}
S.~Cecchini.
\newblock A long neck principle for {R}iemannian spin manifolds with positive
  scalar curvature.
\newblock {\em Geom. Funct. Anal.}, 30(5):1183--1223, 2020.

\bibitem{MR4937352}
J.~Choi and Y.-G. Oh.
\newblock Injectivity radius lower bound of convex sum of tame {R}iemannian
  metrics and applications to symplectic topology.
\newblock {\em Adv. Math.}, 479:Paper No. 110443, 49, 2025.

\bibitem{ConnesSkandalis}
A.~Connes and G.~Skandalis.
\newblock The longitudinal index theorem for foliations.
\newblock {\em Publ. Res. Inst. Math. Sci.}, 20(6):1139--1183, 1984.

\bibitem{Normallyhyperbolicinvariantmanifolds}
J.~Eldering.
\newblock {\em Normally hyperbolic invariant manifolds}, volume~2 of {\em
  Atlantis Studies in Dynamical Systems}.
\newblock Atlantis Press, Paris, 2013.
\newblock The noncompact case.

\bibitem{Engel2025relative}
A.~Engel and C.~Wulff.
\newblock The relative index in coarse index theory and submanifold
  obstructions to uniform positive scalar curvature.
\newblock arXiv:2506.14301, 2025.

\bibitem{Gromov18}
M.~Gromov.
\newblock Metric inequalities with scalar curvature.
\newblock {\em Geom. Funct. Anal.}, 28(3):645--726, 2018.

\bibitem{GromovLawson80}
M.~Gromov and H.~B. Lawson, Jr.
\newblock Spin and scalar curvature in the presence of a fundamental group.
  {I}.
\newblock {\em Ann. of Math. (2)}, 111(2):209--230, 1980.

\bibitem{Higson91}
N.~Higson.
\newblock A note on the cobordism invariance of the index.
\newblock {\em Topology}, 30(3):439--443, 1991.

\bibitem{Higson00}
N.~Higson and J.~Roe.
\newblock {\em Analytic {$K$}-homology}.
\newblock Oxford Mathematical Monographs. Oxford University Press, Oxford,
  2000.
\newblock Oxford Science Publications.

\bibitem{HRY93}
N.~Higson, J.~Roe, and G.~Yu.
\newblock A coarse {M}ayer-{V}ietoris principle.
\newblock {\em Math. Proc. Cambridge Philos. Soc.}, 114(1):85--97, 1993.

\bibitem{HochsdeKok25}
P.~Hochs and T.~de~Kok.
\newblock A partitioned manifold index theorem for noncompact hypersurfaces.
\newblock arXiv:2507.16591, 2025.

\bibitem{Karami2019relative-partitioned}
M.~Karami, M.~Zadeh, and A.~Sadegh.
\newblock A coarse relative-partitioned index theorem.
\newblock {\em Bull. Sci. Math.}, 153:57--71, 2019.

\bibitem{Ludewig2025large}
M.~Ludewig and G.~Thiang.
\newblock Large-scale quantization of trace {I}: Finite propagation operators.
\newblock {\em arXiv preprint arXiv:2506.10957}, 2025.

\bibitem{Roe1988dualtoeplitz}
J.~Roe.
\newblock Partitioning noncompact manifolds and the dual {T}oeplitz problem.
\newblock In {\em Operator algebras and applications, {V}ol.\ 1}, volume 135 of
  {\em London Math. Soc. Lecture Note Ser.}, pages 187--228. Cambridge Univ.
  Press, Cambridge, 1988.

\bibitem{Roe1996indextheory}
J.~Roe.
\newblock {\em Index theory, coarse geometry, and topology of manifolds},
  volume~90 of {\em CBMS Regional Conference Series in Mathematics}.
\newblock Conference Board of the Mathematical Sciences, Washington, DC; by the
  American Mathematical Society, Providence, RI, 1996.

\bibitem{Rosenberg83}
J.~Rosenberg.
\newblock {$C^{\ast} $}-algebras, positive scalar curvature, and the {N}ovikov
  conjecture.
\newblock {\em Inst. Hautes \'{E}tudes Sci. Publ. Math.}, (58):197--212 (1984),
  1983.

\bibitem{Rosenberg86a}
J.~Rosenberg.
\newblock {$C^\ast$}-algebras, positive scalar curvature and the {N}ovikov
  conjecture. {II}.
\newblock In {\em Geometric methods in operator algebras ({K}yoto, 1983)},
  volume 123 of {\em Pitman Res. Notes Math. Ser.}, pages 341--374. Longman
  Sci. Tech., Harlow, 1986.

\bibitem{Rosenberg86b}
J.~Rosenberg.
\newblock {$C^\ast$}-algebras, positive scalar curvature, and the {N}ovikov
  conjecture. {III}.
\newblock {\em Topology}, 25(3):319--336, 1986.

\bibitem{Schick2018largescale}
T.~Schick and M.~Zadeh.
\newblock Large scale index of multi-partitioned manifolds.
\newblock {\em J. Noncommut. Geom.}, 12(2):439--456, 2018.

\bibitem{SchoenYau79}
R.~Schoen and S.~T. Yau.
\newblock On the structure of manifolds with positive scalar curvature.
\newblock {\em Manuscripta Math.}, 28(1-3):159--183, 1979.

\bibitem{Seto2018toeplitz}
T.~Seto.
\newblock Toeplitz operators and the {R}oe-{H}igson type index theorem.
\newblock {\em J. Noncommut. Geom.}, 12(2):637--670, 2018.

\bibitem{Sweeney}
P.~Sweeney, Jr.
\newblock Examples for scalar sphere stability.
\newblock {\em Calc. Var. Partial Differential Equations}, 64(6):Paper No. 188,
  33, 2025.

\bibitem{WangJinmin25}
J.~Wang.
\newblock Non-negative scalar curvature on spin surgeries and {N}ovikov
  conjecture.
\newblock arXiv:2512.16535, 2025.

\bibitem{WXYZ24}
J.~Wang, Z.~Xie, G.~Yu, and B.~Zhu.
\newblock $l^p$-coarse {B}aum--{C}onnes conjecture for $l^q$-coarse embeddable
  spaces.
\newblock arXiv:2411.15070, 2024.

\bibitem{higherindex}
R.~Willett and G.~Yu.
\newblock {\em Higher index theory}, volume 189 of {\em Cambridge Studies in
  Advanced Mathematics}.
\newblock Cambridge University Press, Cambridge, 2020.

\bibitem{Wulff12}
C.~Wulff.
\newblock Bordism invariance of the coarse index.
\newblock {\em Proc. Amer. Math. Soc.}, 140(8):2693--2697, 2012.

\bibitem{Wulff19}
C.~Wulff.
\newblock Coarse indices of twisted operators.
\newblock {\em J. Topol. Anal.}, 11(4):823--873, 2019.

\bibitem{XieYu14}
Z.~Xie and G.~Yu.
\newblock Positive scalar curvature, higher rho invariants and localization
  algebras.
\newblock {\em Adv. Math.}, 262:823--866, 2014.

\bibitem{Zadeh2010indextheory}
M.~Zadeh.
\newblock Index theory and partitioning by enlargeable hypersurfaces.
\newblock {\em J. Noncommut. Geom.}, 4(3):459--473, 2010.

\bibitem{Zeidler20}
R.~Zeidler.
\newblock Width, largeness and index theory.
\newblock {\em SIGMA Symmetry Integrability Geom. Methods Appl.}, 16:Paper No.
  127, 15, 2020.

\bibitem{Zeidler22}
R.~Zeidler.
\newblock Band width estimates via the {D}irac operator.
\newblock {\em J. Differential Geom.}, 122(1):155--183, 2022.

\end{thebibliography}

\end{document}